\documentclass[12pt,reqno]{article}
 \usepackage[body={6.5in,9.0in},left=1.0in, top=1.0in,centering]{geometry}    
\usepackage{graphicx,color}
\usepackage{amsmath,amsthm,amssymb,amsfonts,multirow,setspace,epsfig,latexsym}
\usepackage[letterpaper=true,colorlinks=true,urlcolor=blue,linkcolor=blue,citecolor=blue,pdfstartview=FitH]{hyperref}
\usepackage{pstricks}
\usepackage{epstopdf}
\DeclareGraphicsRule{.tif}{png}{.png}{`convert #1 `dirname #1`/`basename #1 .tif`.png}

 \usepackage[sort,comma]{natbib}
\allowdisplaybreaks
\begin{document}

\makeatletter \@addtoreset{equation}{section}
\renewcommand{\thesection}{\arabic{section}}
\renewcommand{\theequation}{\thesection.\arabic{equation}}

\newcommand{\bdd}{\hspace*{-0.08in}{\bf.}\hspace*{0.05in}}
\def\para#1{\vskip 0.4\baselineskip\noindent{\bf #1}}
\def\qed{$\qquad \Box$}
\def\rr{{\mathbb R}}
\def\ss{{\mathbb S}}
\def\zz{{\Bbb Z}}
\def\G{{\mathcal G}}
\def\chao#1 {\fbox {\footnote {\ }}\ \footnotetext {From Chao: #1}}

\def\fubao#1 {\fbox {\footnote {\ }}\ \footnotetext {From Fubao: #1}}

\newcommand{\bed}{\begin{displaymath}}
\newcommand{\eed}{\end{displaymath}}
\newcommand{\bea}{\bed\begin{array}{rl}}
\newcommand{\eea}{\end{array}\eed}
\newcommand{\disp}{\displaystyle}
\newcommand{\ad}{&\!\!\!\disp}
\newcommand{\aad}{&\disp}
\newcommand{\barray}{\begin{array}{ll}}
\newcommand{\earray}{\end{array}}
\newcommand{\La}{\Lambda}
\newcommand{\la}{\lambda}
\newcommand{\sg}{\sigma}
\newcommand{\lf}{\lfloor}
\newcommand{\rf}{\rfloor}
\newcommand{\wdt}{\widetilde}
\newcommand{\wdh}{\widehat}
\newcommand{\bQ}{{\mathbb Q}}
\newcommand{\al}{\alpha}
\newcommand{\lbar}{\overline}
\newcommand{\tr}{\mathrm {tr}}
\newcommand{\sgla}{\sigma_{\lambda_{0}}}
\newcommand{\lan}{\langle}
\newcommand{\ran}{\rangle}

\newcommand{\im}{\mathtt{i}}

\newcommand{\E}{{\mathbb E}}
\newcommand{\bP}{{\mathbb P}}
\newcommand{\R}{\mathbb R}
\renewcommand{\P}{\mathbb P}
\newcommand{\Q}{\mathbb Q}
\newcommand{\LL}{{\mathcal L}}
\newcommand{\A}{{\mathcal A}}
\newcommand{\B}{{\mathcal B}}
\newcommand{\F}{{\mathcal F}}
\newcommand{\N}{{\mathcal N}}
\newcommand{\e}{\varepsilon}
\newcommand{\one}{\mathbf 1}
\renewcommand{\d}{\mathrm{d}}
\newcommand{\set}[1]{\left\{#1\right\}}
\newcommand{\abs}[1]{\left\vert#1\right\vert}
\newcommand{\norm}[1]{\left\|#1\right\|}

\newtheorem{Theorem}{Theorem}[section]
\newtheorem{Corollary}[Theorem]{Corollary}
\newtheorem{Proposition}[Theorem]{Proposition}
\newtheorem{Lemma}[Theorem]{Lemma}
\theoremstyle{definition}
\newtheorem{Definition}[Theorem]{Definition}

\newtheorem{Note}[Theorem]{Note}

\newtheorem{Remark}[Theorem]{Remark}
\newtheorem{Example}[Theorem]{Example}
\newtheorem{Counterexample}[Theorem]{Counterexample}
\newtheorem{Assumption}[Theorem]{Assumption}

\makeatletter
\renewcommand\theequation{\thesection.\@arabic\c@equation}
\makeatother
\newcommand{\beq}[1]{\begin{equation} \label{#1}}
\newcommand{\eeq}{\end{equation}}

\title{On the Martingale Problem and Feller and Strong Feller Properties for Weakly Coupled L\'evy Type Operators }
\author{Fubao Xi\thanks{School of Mathematics and Statistics,
Beijing Institute of Technology, Beijing 100081, China, xifb@bit.edu.cn.}
\and Chao Zhu\thanks{Department of Mathematical Sciences, University of Wisconsin-Milwaukee, Milwaukee, WI
53201, zhu@uwm.edu.}}

\maketitle

\begin{abstract}
This paper considers the martingale problem for a class of  weakly coupled  L\'{e}vy type operators. It is shown that under some mild conditions, the martingale problem is well-posed and  uniquely determines a strong Markov process $(X,\Lambda)$. The process $(X,\Lambda)$, called a regime-switching jump diffusion with L\'evy type jumps,  is further shown to posses Feller and strong Feller properties  under non-Lipschitz conditions via the coupling method.

\bigskip

\noindent{\bf Key Words and Phrases.} Weakly coupled L\'{e}vy type
operator, martingale problem, Feller property, strong Feller property, coupling method. 

\bigskip


\bigskip

\noindent{\bf 2000 MR Subject Classification.} 60J25, 60J27, 60J60,
60J75.
\end{abstract}

\renewcommand{\baselinestretch}{1.18}

\section{Introduction}\label{I}

This paper deals with the martingale problem for a weakly coupled L\'evy type operator $\A$ defined as follows.
Let $d$ and $n_{0}$ be two positive integers and set $\ss :=\{1, 2, \cdots, n_{0}\}$.
For all  ``nice'' functions $f$: $\R^{d}\times
\ss \to \R$, we  define
\begin{equation}\label{A}
\A f(x,k):=\LL_{k}f(x,k)+Q(x)f(x,k).
\end{equation}
Here, for each $k \in \ss$, ${\LL}_{k}$ is
a L\'{e}vy type operator defined as follows:
\begin{equation}\label{L}
\begin{array}{ll} {\LL}_{k} f(x,k)\ad
: =\frac {1}{2}\hbox{tr}\bigl(a(x,k)\nabla^2 f(x,k)\bigr)+\langle
b(x,k),
\nabla f(x,k)\rangle\\
\aad \quad + \int_{\R^{d}_{0}}
\bigl(f(x+u,k)-f(x,k)- \langle \nabla f(x,k), u\rangle
{\mathbf{1}}_{B(0, \e_{0})}(u)\bigr)\nu(x,k,\d u),
\end{array}
\end{equation} where for each $(x,k) \in \rr^{d} \times \ss$,
$a(x,k)=\bigl(a_{ij}(x,k)\bigr)\in \R^{d\times d}$ is  symmetric and nonnegative definite,  $b(x,k)=\bigl(b_{i}(x,k)\bigr) \in \R^{d}$, and $\nu (x,k,\cdot)$ is a L\'{e}vy kernel
 such that for each $(x,k)$,  $\nu (x,k, \cdot)$ is a nonnegative $\sigma$-finite
measure on $\R^{d}_{0}$ satisfying   
\begin{equation}
\label{eq-Levy-measure}
\int_{\R^{d}_{0}}|u|^{2}{\mathbf{1}}_{B(0, \e_{0})}(u)\nu (x,k, \d u)<\infty \text{ and }\nu
(x,k,\rr^{d}\setminus  B(0, \varepsilon_{0}))<\infty,
\end{equation} where $\e_{0}>  0$ (one can
usually take $\varepsilon_{0}=1$).
 Here and hereafter, $\nabla f(\cdot, k)$ and $\nabla^{2} f(\cdot,k)$
denote respectively the gradient and Hessian matrix of
$f(\cdot,k)$, $\langle \cdot , \cdot \rangle$ denotes the
inner product in $\R^{d}$, $\R^{d}_{0}: =\R^{d}\setminus \{0\}$, and $B(0, r):=\{x\in\R^{d}: |x|<r\}$ for $r>0$.
 In \eqref{A} and throughout the paper, the switching operator $Q(x)$ is defined as follows:
\begin{equation}\label{Q(x)}
Q(x)f(x,k): =\sum_{l \in \ss} q_{kl}(x) \bigl(f(x,l)-f(x,k)\bigr),
\end{equation}
where $Q(x)=\bigl(q_{kl}(x)\bigr)$ is an $n_{0} \times n_{0}$ matrix-valued
measurable function on $\rr^{d}$ such that for all $  x \in  \rr^{d}$  we have
$ q_{kl}(x) \ge 0 $       for  $k \ne l$,
and for each  $k \in \ss$,
$\sum_{l \in \ss} q_{kl}(x) =0$.

In this paper, we  consider the martingale problem for the
weakly coupled L\'{e}vy type operator $\A$ defined in \eqref{A}  on
$\Omega:=D([0,\infty),\rr^{d} \times \ss)$,
  the space of right continuous functions on $[0,\infty)$ into $\rr^{d} \times \ss$ having left limits endowed with the Skorohod topology.
 Let ${\F}_{t}$ be the $\sg$-field generated by the cylindrical sets on $D([0,\infty),\rr^{d} \times \ss)$ up to time $t$ and   set
${\F}=\bigvee_{t=0}^{\infty} {\F}_{t}$.  Next, let $C^{\infty}_{c}(\rr^{d}\times \ss)$ denote the family of   functions defined on $\rr^{d}\times \ss$
such that $f(\cdot,k) \in C^{\infty}_{c}(\rr^{d})$ with $k \in \ss$, where $C^{\infty}_{c}(\rr^{d})$ denotes the family of
functions defined on $\rr^{d}$ which are infinitely   differentiable and have compact supports.

\begin{Definition} \label{defn-mg-A} For a given $(x,k)\in
\rr^{d} \times \ss$, we say a probability measure $\bP^{(x,k)}$ on
$D([0,\infty),\rr^{d} \times \ss)$ is a solution to the martingale
problem for the operator $\A$ starting from $(x,k)$, if
$\bP^{(x,k)}((X(0),\La(0))=(x,k))=1$ and for each function $f \in
C^{\infty}_{c}(\rr^{d}\times \ss)$, \begin{equation}\label{martingale1}
M_{t}^{(f)}:=f(X(t),\La(t))-f(X(0),\La(0))-\int_{0}^{t} \A f(X(s),\La(s))\d s \end{equation}
is an $\{\F_{t}\}$-martingale with respect to $\bP^{(x,k)}$, where $(X,\La)$ is the coordinate process defined by $(X(t,\omega), \La(t, \omega)) = \omega(t)\in \R^{d}\times \ss$ for all $t \ge 0$ and $\omega\in \Omega$.

Sometimes, we say
that the probability measure $\bP^{(x,k)}$ is a martingale solution
for the operator $\A$ starting from $(x,k)$. We often
call  the coordinate process $(X,\La)$ the {\em regime-switching jump diffusion with L\'{e}vy
type jumps}.
\end{Definition}

Since the seminal work of Stroock and Varadhan (\cite{StroockV-691,StroockV-69II}) on  martingale problems for second order diffusion operators, the notion of martingale problems have been extensively studied for various processes  in the literature. For example,  \cite{Koma-73} and \cite{Stroock-75} prove that the martingale problem for a L\'{e}vy  type operator is well-posed; \cite{Bass-88} investigates the martingale problem for pure jump Markov processes; \cite{DawsonZ-91} and \cite{FengZ-92} considers the martingale problem for a class of nonlinear master equations for chemical reaction models; \cite{Xi-98} and \cite{ZhengZ-86} discuss the martingale problem for $Q$-processes;   \cite{Zamb-00} provides an analytic approach for existence and uniqueness for martingale problems in infinite dimensions; \cite{Kurtz-98} presents a martingale problems for conditional distributions of Markov processes; \cite{MikuR-99} studies martingale problems for stochastic partial differential equations; \cite{Perkins-95} investigates  the martingale problem for interactive measure-valued branching diffusions; \cite{Hoh-94} investigates the martingale problems for psudo-differential operators; and \cite{BassT-09} is devoted to the martingale problem for  stable-like processes.

This paper is motivated by \cite{Stroock-75} and considers weakly coupled L\'{e}vy  type operator $\A$ defined in \eqref{A}.  Roughly speaking, in addition to the diffusion term, the drift term,  and the jump term spelled out in \eqref{L} for each $k\in \ss$, $\A$ also contains a  component $Q(x)$ defined in \eqref{Q(x)}, which provides the switching mechanism for the operators $\LL_{k}, k\in \ss$. In other words, the operators $\LL_{k}, k\in \ss$ are coupled through the operator $Q(x)$ of \eqref{Q(x)}. Therefore it is convenient to call the operator $\A$ of \eqref{A} a weakly coupled L\'{e}vy  type operator and the coordinate process $(X,\La)$ a   
{\em regime-switching L\'evy type process}.
Here we remark that $Q(x)= (q_{kl}(x))$ depends on $x$. When the L\'evy kernel $\nu(x,k,\d z)$ is independent of $(x,k)$, then $\A$ reduces to the infinitesimal generator of a regime-switching jump diffusion process as those considered in \cite{Xi-09,Yin-Xi-10,ZhuYB-15}.  Thanks to their ability in incorporating both  structural changes and jumps of various sizes, regime-switching (jump) diffusion processes   have attracted many interests lately. See, for example,
 \cite{Yin-Xi-10,Xi-09,Xi-08-Feller,SethiZ, YinZh,MaoY,YZ-10,ZhuYB-15,ShaoX-14,Wang-14,XiZ-06,CloezH-15} and   references therein for investigations  of such processes  and their applications in areas such as inventory control,   ecosystem modeling, manufacturing and production planning, financial engineering,    risk theory,   etc.

 However, we notice that in these papers, the jump mechanism is usually assumed to be a finite  or a L\'evy measure $\nu(\d z)$. The study of regime-switching jump diffusions with L\'evy type jumps is relatively scarce, which is precisely the focus of this paper.   In addition, in leu of the stochastic differential equation approach in the aforementioned papers, this  paper begins with the martingale problem for the weakly coupled L\'evy type operator $\A$ of \eqref{A}. We prove that under very mild conditions, the martingale problem for  the operator $\A$ is well-posed. That is, we show that for any $(x,k) \in \R^{d}\times \ss$, there is exactly one martingale solution for the operator $\A$ starting from $(x,k)$. This is achieved in two steps.
 In the first step, we assume that    $Q$ of \eqref{Q(x)} takes a special form ($\wdh Q$ in \eqref{eq-Q-hat}); consequently  $\A$ of \eqref{A}  reduces to   $\wdh \A $ of \eqref{eq-A-hat}. For such a special operator $\wdh \A$,   under Assumption \ref{assump-Lhasonlyone},
 we   manipulate  the Stroock-Varadhan piecing together
 method (refer to \S 6.1 of \cite{Stroock-V}) to construct
 a martingale solution for the operator $\widehat {\mathcal A}$
 with an arbitrary initial condition $(x,k)\in \R^{d}\times\mathbb S$
 and further show that this solution    is weakly unique
 in Theorem \ref{special}.
 The second step deals with the general case when $Q(x)$ of \eqref{Q(x)}  is $x$-dependent. For such a case, we utilize the likely ratio martingale $M$ defined in \eqref{Mt} to establish the desired existence and uniqueness result in Theorem \ref{thm-general}. One of the key steps in this approach is to show that the switching times and the jump times are mutually disjoint with probability one; see Proposition \ref{prop-no-common-jump} for details.  Such a strategy of using the likelihood ratio martingale  was used in the recent paper  \cite{Xi-09}, where the jump component  is driven by a finite measure. In this paper, we  develop this approach to handle the general weakly coupled  L\'evy type operator $\A$.

 Having established that the martingale problem for $\A$ is well posed, we then have determined a strong Markov process $(X,\La)$ with state space $\R^{d}\times \ss$. The second part of this paper proves that such a process possesses the Feller and strong Feller properties. Here the main tool is the coupling method. For the introduction to coupling method and its applications in various areas of probability and stochastic analysis, we refer to \cite{Chen04,LindR-86,Lindvall,HairMS-11,PriolaW-06,Wang-10} and the references therein.   In this paper,  we first use the coupling method to show that for each $k\in \ss$,    the  process $\wdt X^{(k)}$ corresponding to the L\'evy type operator $\LL_{k}$ of \eqref{L} is Feller under Assumption \ref{FP1}, in which the coefficients, and in particular, the L\'evy type kernel  of the operator  $\LL_{k}$,  are  {\em non-Lipschitz} in the $x$ variable.
 In order to establish the Feller property for the process $(X,\La)$, we kill the L\'evy type process  $\wdt X^{(k)}$ at rate $-q_{kk}$ to obtain the  process  $ X^{(k)}$; see \eqref{kp1} for details.
A mild condition on the functions $q_{kl}(x)$ (Assumption \ref{FP4}) then helps us to derive the Feller property for the killed L\'evy type process $X^{(k)}$ in Lemma \ref{FP5}.  Finally we use a series representation for the resolvent $G_{\alpha}$ of the process $(X,\La)$ and a result in \cite{MeynT-93} to establish the Feller property for  the process $(X,\La)$; this is spelled out in Theorem \ref{thm-Feller}.

 Next we use a similar approach to establish the strong Feller property for the process  $(X,\La)$ in Section \ref{sect-str-Feller}. More precisely, inspired by \cite{PriolaW-06}, we use a combination of reflection and marching coupling for the operator $\LL_{k}$ to establish the strong Feller property   for the  processes  $\wdt X^{(k)}$ and   $ X^{(k)}$ in Proposition \ref{prop-str-Fe}.  Again, we allow the coefficients and the L\'evy type kernel  of the operator  $\LL_{k}$ to be  {\em non-Lipschitz} in the $x$ variable in Proposition \ref{prop-str-Fe}. Then, as in Section \ref{sect-Feller}, the series representation for the resolvent $G_{\alpha}$ of the process $(X,\La)$ and the aforementioned  result in \cite{MeynT-93}  lead to the desired strong Feller property for the process $(X,\La)$ in Theorem \ref{thm-str-Feller}.

The rest of the paper is arranged as follows. We present the necessary assumptions as well as some preliminary results  in Section \ref{sect-preliminaries}. In addition, Section \ref{sect-preliminaries}   presents some martingales associated with the operator $\A$ (Theorem \ref{thm-martingales}). These martingales  are interesting in their own rights. Moreover, they are useful in  the proofs of Section \ref{sect-general}.  The well-posedness of the martingale problem for $\A$ is divided into two parts: Section   \ref{sect-special} treats the special case when $\A$ is given by $\wdh \A$ of \eqref{eq-A-hat} and Section \ref{sect-general} deals with the general case. Section \ref{sect-Feller} is devoted to proving  the Feller property for the process $(X,\La)$.  Strong Feller property is established in Section \ref{sect-str-Feller}.

To facilitate later presentations, let us introduce some notations that will be frequently used throughout the paper.
  Let $D([0,\infty),\rr^{d})$ (resp., $D([0,\infty),\ss)$) be the space of right continuous  functions on $[0,\infty)$ into $\rr^{d}$ (resp., $\ss$) having left limits endowed with the Skorohod topology, and let ${\G}_{t}$ (resp., ${\N}_{t}$) be the $\sg$-field generated by the cylindrical sets on $D([0,\infty),\rr^{d})$ (resp., $D([0,\infty),\ss)$) up to time $t$. Also denote ${\G}=\bigvee_{t=0}^{\infty} {\G}_{t}$ and ${\N}=\bigvee_{t=0}^{\infty} {\N}_{t}$. It is easy to see that ${\F}_{t}={\G}_{t}\bigvee {\N}_{t}$ for any $t\ge 0$ and that $\F = \G\bigvee \N$.
  Let $C^{2} (\R^{d} \times \ss)$ be the family of functions defined on
  $\R^{d}\times \ss$ such that $f(\cdot, k)\in C^{2}(\R^{d})$
  for each $k \in \ss$ and let $C^{2}_{b} (\R^{d} \times \ss)$
  be the family of bounded functions defined on $\R^{d}\times \ss$
  such that $f(\cdot, k)\in C^{2}(\R^{d})$
  with bounded first and second order continuous partial derivatives in $x$
  for each $k \in \ss$.
Moreover, we denote
    by $\B(\ss)$ the family of all the measurable functions on $\ss$ into $\rr$.

\subsection{Assumptions and Preliminaries}\label{sect-preliminaries}

 Similar to Definition \ref{defn-mg-A}, for a given $k \in \ss$, we can also define
the martingale solution for the L\'{e}vy type operator ${\LL}_{k}$ of \eqref{L}
as follows. For a given $x\in \rr^{d}$, we say a probability measure
${\bP}_{k}^{(x)}$ on $D([0,\infty),\rr^{d})$ is a solution to the
martingale problem for the operator ${\LL}_{k}$ starting from $x$,
if $\bP_{k}^{(x)}(X(0)=x)=1$ and for each function $f \in
C^{\infty}_{c}(\rr^{d})$,
\begin{equation}\label{martingale2}
M_{t}^{(k)(f)}:=f(X(t))-f(X(0))-\int_{0}^{t} {\LL}_{k} f(X(s))\d s \end{equation} is a $\{\G_{t}\}$-martingale with respect to ${\bP}_{k}^{(x)}$.

For the existence and uniqueness
of martingale solution corresponding to the
weakly coupled L\'{e}vy type operator $\A$ defined in (\ref{A}), we
make the following assumption.

  \begin{Assumption} \label{assump-Lhasonlyone} Suppose the following conditions hold:
\begin{itemize}
  \item[(i)] For each $k \in \ss$ and $x\in \R^{d}$, the L\'{e}vy type operator ${\LL}_{k}$ defined in \eqref{L}
  has a unique martingale solution ${\P}_{k}^{(x)}$ starting from $x$;
  \item[(ii)]  For each $k \in \ss$, the function $q_{kk}(x) \le 0$ is bounded from below; and
  \item[(iii)] \begin{equation}
\label{eq-nu-cond-new}
\sup_{(x,k)\in \R^{d}\times \ss} \int_{\R_{0}^{d}} (1\wedge |y|^{2} ) \nu (x,k, \d y) < \infty.
\end{equation}
\end{itemize} \end{Assumption}
\begin{Remark} The martingale problem for the operator $\LL_{k}$ of \eqref{L} has been well-studied in the literature. For example,
  \cite{Koma-73} and \cite{Stroock-75} contain   explicit
sufficient conditions for the existence and uniqueness of martingale solutions for  $\LL_{k}$.
\end{Remark}

We will prove in Section \ref{sect-general} that there exists a unique martingale solution for the operator $\A$ defined in
\eqref{A}. Throughout the rest of this paper,
as standing hypotheses, we assume that Assumption \ref{assump-Lhasonlyone} holds.


We finish the section with the following theorem, which will be needed in the proof of Theorem \ref{thm-general}, but also interesting in its own right. Let us introduce a counting measure as follows.
 For $t \ge 0$ and  $\Gamma \in \B(\R^{d}_{0})$ with $0 \notin \bar{\Gamma}$, we let \begin{equation}
\label{eq-eta(t,Gamma)}
\eta(t, \Gamma) : = \sum_{ s \le t} \one_{\Gamma}(\Delta X(s))=  \sum_{ s \le t} \one_{\Gamma}(X(s)- X(s-));
\end{equation} it counts the number of jumps for the $ X$ component such that  $\Delta X(s)\in \Gamma$, $0\le s \le t$.
\begin{Theorem}\label{thm-martingales}
Suppose $\P$ is a solution to the  martingale problem associated with $\A$ starting from $(x,k) \in \R^{d}\times \ss$, then
the following assertions are true: 
\begin{enumerate}
  \item[{\em (a)}] For each $f \in C^{2}_{b}(\R^{d}\times \ss)$ such that $f$ is uniformly positive,  $$f(X(t),\La(t) ) \exp\biggl\{ -\int_{0}^{t} \frac{\A f(X(u), \La(u))}{f(X(u), \La(u))}   \d u\biggr \}$$    is a $\P$-martingale.
  \item[{\em (b)}] For each $\theta \in \R^{d}$,
  \begin{align*}
\exp\biggl \{\mathtt{ i} \Big \langle \theta, X(t) - X(0) - \int_{0}^{t} b(X(u),\La(u)) \d u \Big\rangle + \frac{1}{2} \int_{0}^{t} \langle \theta, a(X(u),\La(u)) \theta \rangle \d u \\  - \int_{0}^{t} \int_{\R^{d}_{0}} [e^{\mathtt{i} \langle \theta, y \rangle} - 1 - \mathtt{i} \langle \theta, y \rangle  {\mathbf{1}}_{B(0, \e_{0})}(y) ] \nu (X(u), \La(u), \d y) \d u \biggr\}
\end{align*}    is a $\P$-martingale, where $\im:= \sqrt{-1}$.
  \item[{\em (c)}] Let $g$ be  a bounded measurable function on $\R^{d}$ which vanishes in a neighborhood of the origin. Then for any $\theta\in \R^{d}$,
  \begin{align*}
  \exp& \biggl \{\mathtt{ i} \Big \langle \theta, X(t) - X(0) - \int_{0}^{t} b(X(u),\La(u)) \d u \Big\rangle    + \frac{1}{2} \int_{0}^{t} \langle \theta, a(X(u),\La(u)) \theta \rangle \d u \\ &  \   + \int_{\R^{d}_{0}} g(y) \eta(t, \d y)   - \int_{0}^{t} \int_{\R^{d}_{0}} [e^{\mathtt{i} \langle \theta, y \rangle + g(y)} - 1 - \mathtt{i} \langle \theta, y \rangle  {\mathbf{1}}_{B(0, \e_{0})}(y) ] \nu (X(u), \La(u), \d y) \d u \biggr\}
\end{align*}
is a $\P$-martingale.
\item[{\em (d)}] Define $\wdt\eta(t,\Gamma) : = \eta(t, \Gamma) - \int_{0}^{t} \nu(X(u),\La(u), \Gamma) \d u$. Then for each $\theta \in \R^{d} $ and any measurable function $g$ on $\R^{d}_{0}$ satisfying the condition $|g(y)|^{2} \le C (1\wedge |y|^{2}) $ for some  positive constant  $C $,
\begin{align*}
   \exp &\biggl \{\mathtt{ i} \Big \langle \theta, X(t) - X(0) - \int_{0}^{t} b(X(u),\La(u)) \d u \Big\rangle  \\ &   \quad   + \frac{1}{2} \int_{0}^{t} \langle \theta, a(X(u),\La(u)) \theta \rangle \d u    + \int_{\R^{d}_{0}}  g(y) \wdt\eta(t, \d y)  \\ & \quad -\int_{0}^{t} \int_{\R^{d}_{0}} [e^{\mathtt{i} \langle \theta, y \rangle + g(y)} - 1 - \mathtt{i} \langle \theta, y \rangle  {\mathbf{1}}_{B(0, \e_{0})}(y) - g(y)] \nu (X(u), \La(u), \d y) \d u \!\biggr\}
\end{align*} is a $\P$-martingale. In particular, if $0 \notin \bar \Gamma$, then $\wdt\eta(t,\Gamma)$ is a $\P$-martingale.
\end{enumerate}
\end{Theorem}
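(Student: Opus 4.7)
My strategy is to treat part~(a) as the main structural identity and derive (b)--(d) as applications or extensions. For (a), I will first extend the martingale property of $M^{(f)}$ from $C^\infty_c(\R^d\times\ss)$ to $C^2_b(\R^d\times\ss)$ via a standard truncation argument, using Assumption~\ref{assump-Lhasonlyone}(iii) and the boundedness of $q_{kk}$ to ensure that $\A f$ is bounded whenever $f\in C^2_b$. Define the continuous finite-variation process $Y(t) := \exp\{-\int_0^t \A f(X(u),\La(u))/f(X(u),\La(u))\,\d u\}$; since $f$ is uniformly positive and bounded, $\A f/f$ is bounded and $Y$ is uniformly bounded. The product rule for a c\`adl\`ag semimartingale times a continuous finite-variation process then gives
\begin{equation*}
\d[f(X,\La)Y](t) = Y(t)\,\d M^{(f)}(t) + Y(t)\A f\,\d t - Y(t)\A f\,\d t = Y(t)\,\d M^{(f)}(t),
\end{equation*}
which is a local martingale differential; uniform boundedness of $f$ and $Y$ then upgrades $f(X,\La)Y$ to a true martingale.

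For (b), apply (a) to the complex-valued function $f(x,k):=e^{\im\langle\theta,x\rangle}$. Although $f$ is complex, $|f|\equiv 1>0$, so the product-rule argument from (a) goes through verbatim (or is run on real and imaginary parts separately). Since $f$ is $k$-independent, $Q(x)f\equiv 0$, and a direct calculation yields
\begin{equation*}
\frac{\A f(x,k)}{f(x,k)} = \im\langle\theta,b(x,k)\rangle - \tfrac{1}{2}\langle\theta,a(x,k)\theta\rangle + \int_{\R^d_0}\!\bigl[e^{\im\langle\theta,y\rangle}-1-\im\langle\theta,y\rangle\one_{B(0,\e_0)}(y)\bigr]\nu(x,k,\d y),
\end{equation*}
which, after dividing $f(X(t),\La(t))Y(t)$ by $f(X(0),\La(0))$, is exactly the functional in (b). For (c), combine this Fourier martingale with a jump-exponential factor: since $g$ is bounded and vanishes near the origin, $J(t) := \int g(y)\eta(t,\d y) = \sum_{s\le t}g(\Delta X(s))$ is a pure-jump, bounded-variation process with only finitely many nonzero jumps on $[0,t]$ (using $\nu(x,k,\R^d\setminus B(0,\e_0))<\infty$). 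Applying the It\^o formula for semimartingales with jumps to $Z(t):=\exp\{\im\langle\theta,X(t)-X(0)\rangle+J(t)\}$, the continuous part reproduces the drift $\im\langle\theta,b\rangle$ and the It\^o correction $-\tfrac12\langle\theta,a\theta\rangle$ already present in (b), while at each jump $\Delta X(s)=y\ne 0$ the process $Z$ is multiplied by $e^{\im\langle\theta,y\rangle+g(y)}$, whose predictable compensator over $[0,t]$ is $\int_0^t\!\int[e^{\im\langle\theta,y\rangle+g(y)}-1]\nu(X(u),\La(u),\d y)\,\d u$. Writing $e^{\im\langle\theta,y\rangle+g(y)}-1 = [e^{\im\langle\theta,y\rangle+g(y)}-1-\im\langle\theta,y\rangle\one_{B(0,\e_0)}(y)]+\im\langle\theta,y\rangle\one_{B(0,\e_0)}(y)$ and absorbing the second term into the drift yields the form in (c). Part (d) follows for bounded, origin-vanishing $g$ because the identity $\int g(y)\wdt\eta(t,\d y) = \int g(y)\eta(t,\d y) - \int_0^t\!\int g(y)\nu(X,\La,\d y)\,\d u$ makes the exponents in (c) and (d) algebraically identical; the extension to general $g$ with $|g(y)|^2\le C(1\wedge|y|^2)$ is obtained by approximating with $g_n:=g\,\one_{\{1/n\le |y|\le n\}}$ and passing to the limit via the $L^2$-isometry of $\wdt\eta$ and dominated convergence. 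The ``in particular'' assertion that $\wdt\eta(t,\Gamma)$ is a martingale when $0\notin\bar\Gamma$ then follows from the definition of $\wdt\eta$ together with the identification of $\nu(X(\cdot),\La(\cdot),\d y)\,\d t$ as the predictable compensator of $\eta(\d t,\d y)$.

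The central obstacle is the rigorous invocation of the It\^o formula for $X$ as a semimartingale with characteristics $(b,a,\nu(X(\cdot),\La(\cdot),\d y))$ in the Jacod--Shiryaev sense, when only the martingale-problem description is given a priori. I plan to bridge this by first using part (b) to identify the predictable characteristics of $X$ and, in particular, to confirm that the predictable compensator of the jump measure $\eta(\d t,\d y)$ is $\nu(X(t-),\La(t-),\d y)\,\d t$; the standard semimartingale It\^o formula then applies to the process $Z$ used in part (c). A secondary technicality is the $C^\infty_c$-to-$C^2_b$ extension underlying (a), which is handled by a standard localization/truncation exploiting Assumption~\ref{assump-Lhasonlyone}(iii) and the boundedness of $q_{kk}$.
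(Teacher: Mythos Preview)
Your proposal is correct and follows essentially the approach of Theorem~4.2.1 in \cite{Stroock-V} (and its L\'evy-type extension in \cite{Stroock-75}), which is precisely what the paper defers to; the paper itself omits all details. Your product-rule argument for (a), the specialization to complex exponentials for (b), and the plan to first identify the semimartingale characteristics of $X$ from (b) before invoking the jump It\^o formula for (c)--(d) all match the standard route.
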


\begin{proof} This theorem can be established using very similar arguments as those in the proof of Theorem 4.2.1 in \cite{Stroock-V}.
For brevity, we shall omit the  details here.
\end{proof} 

\section{Martingale Solution: Special Case}\label{sect-special}
 We first consider a special $Q$-matrix
  $\wdh{Q}=\bigl(\wdh{q}_{kl}\bigr)$,  in which
$\wdh{q}_{kl}=1$ for all $k, l \in \ss$ with $k \ne l$ and
$\wdh{q}_{kk}=-(n_{0}-1)$ for all $k\in \ss$. In other words,  we have
\begin{equation}\label{eq-Q-hat}
\wdh{Q}=\bigl(\wdh{q}_{kl}\bigr)=\left(\begin{array}{cccc}
{-(n_{0}-1)} & 1 & \cdots & 1\\
{  1} & {-(n_{0}-1)} & \cdots & 1\\
\vdots & \vdots & \ddots & \vdots\\
1 & 1 & \cdots & -(n_{0}-1)
\end{array} \right).
\end{equation}  
Corresponding to this matrix $\wdh Q$,  we
introduce an operator $\wdh{Q}$ on $\B(\ss)$ as follows:
\begin{equation}
\label{eq-Q-hat-op-defn}
\wdh{Q}f(k)=\sum_{l \in \ss} \wdh{q}_{kl}\bigl(f(l)-f(k)\bigr), \quad k \in \ss.
\end{equation}

For a given $k\in \ss$,  a probability measure ${\bQ}^{(k)}$
on $D([0,\infty),\ss)$ is said to be a solution to the martingale problem for
the operator $\wdh{Q}$ starting from $k$, if
${\bQ}^{(k)}(\La(0))=k)=1$ and for each function $f \in \B(\ss)$,
\begin{equation}\label{martingale3} N_{t}^{(f)}:=f(\La(t))-f(\La(0))-\int_{0}^{t} \wdh{Q}
f(\La(s))\d s \end{equation} is an $\{\N_{t}\}$-martingale with respect to ${\bQ}^{(k)}$. Here $\La$ is the coordinate process $\La(t,\omega) : = \omega(t)$ with $\omega \in D([0,\infty), \ss)$ and $t\ge 0$.

We have the following lemma from
\cite{ZhengZ-86}:

\begin{Lemma} \label{lemma-Q}
For any given $k\in \ss$, there exists a unique martingale solution ${\bQ}^{(k)}$ on $D([0,\infty),\ss)$ for the operator $\wdh{Q}$ starting from $k$. \end{Lemma}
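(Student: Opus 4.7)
The plan is to establish existence by an explicit pathwise construction and uniqueness by a duality argument with the backward Kolmogorov equation; both parts are substantially easier here than for the full operator $\A$, because $\wdh Q$ is a bounded operator on the finite-dimensional space $\B(\ss) \cong \R^{n_0}$.

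For existence, I would construct the coordinate-process law $\bQ^{(k)}$ directly. Let $\tau_1, \tau_2, \ldots$ be i.i.d.\ exponential random variables with rate $n_0 - 1$ and, independently of them, let $(Y_n)_{n\ge 0}$ be a discrete-time Markov chain on $\ss$ with $Y_0 = k$ whose transition kernel sends $Y_{n-1}$ to each state in $\ss\setminus\{Y_{n-1}\}$ with probability $1/(n_0-1)$. Setting $T_n := \tau_1 + \cdots + \tau_n$ and $\La(t) := Y_n$ on $[T_n, T_{n+1})$ produces a candidate process. Because the holding rate $n_0 - 1$ is uniformly bounded, $T_n \to \infty$ almost surely, so $\La$ has no explosion and lies in $D([0,\infty),\ss)$ almost surely; let $\bQ^{(k)}$ denote its induced law. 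A direct computation using the strong Markov property at the jump times $T_n$, combined with the identity $\wdh Q f(k) = \sum_{l \ne k}(f(l) - f(k))$, verifies that $N_t^{(f)}$ in \eqref{martingale3} is an $\{\N_t\}$-martingale for every $f \in \B(\ss)$.

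For uniqueness, I would exploit the fact that $\wdh Q$ is a bounded linear operator on $\B(\ss)$, so $P_t := e^{t\wdh Q} = \sum_{n=0}^{\infty} t^n \wdh Q^n / n!$ is well defined and $u(t,\cdot) := P_{T-t} f$ solves $\partial_t u(t,\cdot) + \wdh Q u(t,\cdot) = 0$ with terminal condition $u(T,\cdot) = f$. Given any martingale solution $\bP$ starting from $k$ and any $T > 0$, a standard time-dependent extension of \eqref{martingale3} — automatic here since $\ss$ is finite and $u(t,l)$ is smooth in $t$ — yields that $u(t, \La(t))$ is a $\bP$-martingale, so $\E_{\bP}[f(\La(T))] = u(0, k) = (P_T f)(k)$. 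Running the same argument conditional on $\F_s$ shows that $\bP$ is Markovian with transition semigroup $P_t$, which pins down every finite-dimensional distribution of $\La$; hence $\bP = \bQ^{(k)}$. The only conceptual pitfall to rule out is explosion of the embedded jump chain, but the uniform bound $-\wdh q_{kk} = n_0 - 1$ makes non-explosion automatic, so neither part presents a real analytic obstacle. This is consistent with the statement being quoted directly from \cite{ZhengZ-86}.
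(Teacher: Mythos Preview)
Your proposal is correct. The paper itself does not prove this lemma at all: it simply quotes the result from \cite{ZhengZ-86} with the preamble ``We have the following lemma from \cite{ZhengZ-86}.'' Your self-contained argument---explicit CTMC construction for existence, and duality with the backward equation $\partial_t u + \wdh Q u = 0$ for uniqueness---is the standard route and works cleanly here because $\wdh Q$ is a bounded operator on the finite-dimensional space $\B(\ss)$. The one step you gloss over, the ``time-dependent extension'' of \eqref{martingale3}, is justified by integration by parts: writing $u(t,\La(t)) = \sum_{l\in\ss} u(t,l)\one_{\{l\}}(\La(t))$ and applying the product rule to each summand using the semimartingale decomposition $\one_{\{l\}}(\La(t)) = \one_{\{l\}}(\La(0)) + N_t^{(\one_{\{l\}})} + \int_0^t \wdh Q\one_{\{l\}}(\La(s))\,\d s$ shows that $u(t,\La(t))$ is a $\bP$-martingale, whence $\E_\bP[f(\La(T))\mid\N_s] = (P_{T-s}f)(\La(s))$ and all finite-dimensional distributions are pinned down. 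What your approach buys over a bare citation is that the reader sees exactly why boundedness of $\wdh Q$ (equivalently, finiteness of $\ss$) makes both non-explosion and the semigroup identification automatic.
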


Let  $\La$ be the coordinate process on $D([0,\infty), \ss)$ and let $\{\tau_{n}\}$ be the sequence of stopping times defined by
\begin{equation}\label{tau} \tau_{0} \equiv 0, \quad \text{ and for } n \ge 1, \quad \tau_{n}: =\inf \{ t >\tau_{n-1}:
\La(t) \neq \La(\tau_{n-1})\}. \end{equation} Then it is obvious that
 for any  $k\in \ss$, ${\bQ}^{(k)}\set{\lim_{n \to \infty}
\tau_{n}=+\infty}=1$. Moreover, we have
${\bQ}^{(k)}\bigl(\tau_{1}\ge t\bigr)=\exp(-(n_{0}-1)t)$ for all
$t\ge 0$ and
$${\bQ}^{(k)}\bigl(\La(\tau_{1})=l\bigr)={1}/{(n_{0}-1)} \text{  for  each }
 l\in \ss\setminus \{k\}.$$ Clearly, the distributions of $\tau_{1}$
and $\La(\tau_{1})$ under ${\bQ}^{(k)}$ are   regular.

Now we introduce an operator $\wdh \A$ on $C_{c}^{2}(\R^{d}\times \ss)$ as follows:
\begin{equation}
\label{eq-A-hat}
\wdh \A f(x,k ) : = \LL_{k} f(x,k) + \wdh Q f(x,k),
\end{equation} where the operators $\LL_{k}$ and $\wdh Q$ are defined in \eqref{L} and \eqref{eq-Q-hat-op-defn}, respectively.  Note that $\wdh \A$ of \eqref{eq-A-hat} is really a special case of the operator $\A$ defined in \eqref{A}. 
  We can define the martingale solution for the operator $\wdh \A$ similarly as in Definition \ref{defn-mg-A}.
For convenience of later presentation, let us also denote
 \begin{equation} \label{eq-Mt-hat}
   \wdh M_{t}^{(f)} : = f(X(t),\La(t)) - f(X(0), \La(0)) - \int_{0}^{t} \wdh \A f(X(s), \La(s)) \d s,
 \end{equation} where $f\in C_{c}^{\infty} (\R^{d} \times \ss)$ and $(X, \La)$ is the coordinate process on $D([0,\infty), \R^{d}\times \ss)$.

We will show that for each $(x,k) \in \rr^{d}
\times \ss$, there exists a unique martingale solution $\wdh \P^{(x,k)}$ for the operator $\wdh \A$ starting from $(x.k)$. Our construction of the desired probability measure $\wdh \P^{(x,k)}$ on $D([0,\infty),\rr^{d} \times \ss)$  as well as the proof  of uniqueness for such a solution relies heavily on
the martingale solutions $\{\bP_{k}^{(x)}: k\in\ss,
x\in\rr^{d}\}$ and $\{{\bQ}^{(k)}: k\in\ss\}$, and the stopping
times $\{\tau_{n}\}$ defined in \eqref{tau}.

 But first let us introduce a random point process and a  family of counting measures on $\ss$ as follows.
For $t>0$, $k\in \ss $,  and $A \subset
\ss$, set
\begin{equation}
\label{eq-n-defn}
 n (t,A):=\sum_{s \le t} {\mathbf{1}}_{\{\La(s) \in A, \La(s)\neq
\La(s-)\}},   
\end{equation}   
and
$$\nu(k;A):= \sum_{l \in A \setminus \{k\}} \wdh q_{kl}= \#\{A \backslash\{ k\} \} .$$ In view of Lemma
2.4 of \cite{ShigaT-85}, we know that $\int_{0}^{t} \nu(\La(s);A)\d s$ is the
compensator of the point process $n (t,A)$; namely,
\begin{equation}
\label{eq-tilde-mu}
 \mu (t,A):= n (t,A)-\int_{0}^{t} \nu(\La(s);A)\d s
\end{equation} is a martingale
measure with respect to $\Q^{(k)}$. Moreover, notice that the operator $\wdh Q$ defined in \eqref{eq-Q-hat-op-defn} can
be represented as
\begin{equation}
\label{eq-Q-hat-operator}
 \wdh Qf(k)=\sum_{l \in \ss}\wdh q_{kl}\bigl(f(l) - f(k)\bigr) =\int_{\mathbb S}\bigl(f(l)-f(k)\bigr)\nu(k;\d l).
\end{equation}

Now we present the main result of this section:
\begin{Theorem} \label{special}
For any given $(x,k)\in \rr^{d}
\times \ss$, there exists a unique martingale solution $\wdh\bP^{(x,k)}$ on $D([0,\infty),\rr^{d}
\times \ss)$ for the operator $\wdh\A$ starting from $(x,k)$.
\end{Theorem}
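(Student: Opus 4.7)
The plan is to invoke the Stroock--Varadhan piecing together construction (\S 6.1 of \cite{Stroock-V}), exploiting the fact that for this special operator $\wdh\A$ the switching mechanism $\wdh Q$ is independent of $x$ and produces exponential holding times of a common rate $n_{0}-1$. For existence, I would build $\wdh\bP^{(x,k)}$ inductively across the switching times $\{\tau_{n}\}$ defined in \eqref{tau}. Starting from $(X(0),\La(0))=(x,k)$, on the random interval $[0,\tau_{1})$ (where $\tau_{1}$ is an independent $\mathrm{Exp}(n_{0}-1)$ time) I would run the $X$-coordinate according to $\bP_{k}^{(x)}$, the unique martingale solution for $\LL_{k}$ guaranteed by Assumption \ref{assump-Lhasonlyone}(i), while holding $\La\equiv k$. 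At $\tau_{1}$ I would leave $X(\tau_{1})$ unchanged and redraw $\La(\tau_{1})$ uniformly from $\ss\setminus\{k\}$; then on $[\tau_{1},\tau_{2})$ restart with $\bP_{\La(\tau_{1})}^{(X(\tau_{1}))}$, etc. Because $\wdh\bQ^{(k)}\{\lim_{n}\tau_{n}=\infty\}=1$ (Lemma \ref{lemma-Q}), the resulting measure is well defined on $D([0,\infty),\R^{d}\times\ss)$.

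To verify the martingale property, fix $f\in C_{c}^{\infty}(\R^{d}\times\ss)$ and decompose
\begin{align*}
\wdh M_{t}^{(f)} = \sum_{n\ge 0}\Bigl[f(X(t\wedge\tau_{n+1}),\La(t\wedge\tau_{n+1}))-f(X(t\wedge\tau_{n}),\La(t\wedge\tau_{n}))\Bigr] - \int_{0}^{t}\wdh\A f(X(s),\La(s))\,\d s.
\end{align*}
On each interval $[\tau_{n},\tau_{n+1})$ the process $\La$ is constant at some value $l$ and $X$ evolves as $\bP_{l}^{(X(\tau_{n}))}$, so the within-interval pieces produce the $\LL_{l}f$ contributions; the jumps at the $\tau_{n}$'s contribute $f(X(\tau_{n+1}),\La(\tau_{n+1}))-f(X(\tau_{n+1}),\La(\tau_{n}))$, whose compensator under the exponential clocks and uniform jump law is exactly $\int\wdh Q f(X(s),\La(s))\,\d s$ by \eqref{eq-Q-hat-operator}. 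Combining these, together with the independence built into the construction and optional stopping applied to the pre-$\tau_{n}$ $\LL_{l}$-martingales of \eqref{martingale2}, yields the $\{\F_{t}\}$-martingale property of $\wdh M_{t}^{(f)}$.

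For uniqueness, suppose $\wdh\bP$ is any martingale solution starting from $(x,k)$. Testing \eqref{eq-Mt-hat} against functions $f(x,k)=g(k)$ with $g\in\B(\ss)$ shows that $\La$ alone satisfies the martingale problem for $\wdh Q$ of \eqref{eq-Q-hat-op-defn}, so its law is $\bQ^{(k)}$ by Lemma \ref{lemma-Q}; in particular the $\tau_{n}$'s are i.i.d.\ exponential and $\tau_{n}\to\infty$ a.s. Next, applying \eqref{eq-Mt-hat} to functions $f(x,k)=h(x)\one_{\{k=l\}}$ and to $f(x,k)=h(x)$ (for which $\wdh Q f\equiv 0$), and employing standard regular conditional probability arguments as in \S 6.1 of \cite{Stroock-V}, one sees that the conditional law of $(X(t))_{t\in[\tau_{n},\tau_{n+1})}$ given $\F_{\tau_{n}}$ solves the martingale problem for $\LL_{\La(\tau_{n})}$ started at $X(\tau_{n})$. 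By Assumption \ref{assump-Lhasonlyone}(i) this conditional law equals $\bP_{\La(\tau_{n})}^{(X(\tau_{n}))}$, so induction on $n$ pins down the finite-dimensional distributions of $(X,\La)$ and hence $\wdh\bP=\wdh\bP^{(x,k)}$.

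The main obstacle I anticipate is ruling out simultaneous jumps of $X$ and $\La$ under an arbitrary martingale solution; without this, the identification of the within-interval conditional law with the pure $\LL_{l}$-martingale solution breaks down, because a jump of $X$ at time $\tau_{n+1}$ would be neither accounted for by $\LL_{\La(\tau_{n})}$ nor by $\wdh Q$. In this special setting this difficulty can be handled cleanly: applying Theorem \ref{thm-martingales}(d) with a test function $g$ supported away from the origin shows that the jumps of $X$ form a Poisson-type point measure with compensator $\nu(X(u-),\La(u-),\cdot)\d u$ that is absolutely continuous in $u$, whereas the switching times of $\La$ are determined by independent exponential clocks; hence the jump times of the two coordinates are disjoint almost surely. (This is the analogue, in the special case, of Proposition \ref{prop-no-common-jump} referenced for the general case.) With the no-common-jumps property in hand, the piecing-together argument gives both existence and uniqueness, completing the proof.
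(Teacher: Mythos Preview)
Your approach is essentially the paper's: both use the Stroock--Varadhan piecing-together construction across the switching times $\tau_n$, verify the martingale property of $\wdh M^{(f)}_{\tau_n\wedge\cdot}$ interval by interval via the decomposition $\wdh\A=\LL_k+\wdh Q$ and the compensated measure \eqref{eq-tilde-mu}, establish $\tau_n\to\infty$ by testing with $f(x,k)=g(k)$ and invoking Lemma~\ref{lemma-Q}, and then apply Tulcea's extension theorem.

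The one substantive difference is your handling of the ``no common jumps'' issue, which you flag as the main obstacle. The paper does \emph{not} need this for Theorem~\ref{special}. For uniqueness the paper tests an arbitrary solution $\wdt\P^{(x,k)}$ against functions $g\in C_c^2(\R^d)$ depending only on $x$; since then $\wdh Q g\equiv 0$ and $\La\equiv k$ on $[0,\tau_1)$, the stopped process $g(X(\cdot\wedge\tau_1))-g(x)-\int_0^{\cdot\wedge\tau_1}\LL_k g(X(s))\,\d s$ is automatically a $\wdt\P^{(x,k)}$-martingale, which identifies the law on $\F_{\tau_1}$ with $\P_k^{(x)}\times\Q^{(k)}$ directly by Assumption~\ref{assump-Lhasonlyone}(i), with no need to separate $X$-jumps from $\La$-jumps. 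Induction via regular conditional probabilities (Theorem~6.1.2 of \cite{Stroock-V}) then finishes. In the paper's logical order, Proposition~\ref{prop-no-common-jump} is proved \emph{afterwards}, for the already-constructed measure $\wdh\P$, and is only used in Section~\ref{sect-general} for the general case; invoking it here would invert that order. Your sketched direct argument (via Theorem~\ref{thm-martingales}(d) plus ``independent exponential clocks'') is also incomplete as stated: for an arbitrary solution you only know that the $\La$-marginal is $\Q^{(k)}$, not that the switching clocks are independent of $X$, so the disjointness claim would still require work.
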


\begin{proof} The proof is divided into two steps. The first step establishes the existence of a martingale solution $\wdh \P$ for the operator $\wdh \A$ starting from $(x,k)$ while the second step deals with the uniqueness.

{\em Step 1.} For any given $(x,k)\in \rr^{d} \times \ss$, we define a series of probability measures on $(\Omega,\F)$ as follows:
\begin{equation}\label{Pn}
\bP^{(1)}=\bP_{k}^{(x)}\times \bQ^{(k)}, \quad
 \text{ and for }n\ge 1, \quad
\bP^{(n+1)}=\bP^{(n)}\otimes{}_{\tau_{n}}
\bigl(\bP_{\La(\tau_{n})}^{(X(\tau_{n}))}\times
\bQ^{(\La(\tau_{n}))} \bigr),    \end{equation} where
$\Omega=D([0,\infty),\rr^{d} \times \ss)$.
Thanks to Theorem 6.1.2 of \cite{Stroock-V}, $\bP^{(n+1)} =\bP^{(n)} $ on $\F_{\tau_{n}}$.

Let $f\in C_{c}^{2} (\R^{d} \times \ss)$. We have
\begin{displaymath}
f(X(\tau_{1} \wedge t), k) - f(X(0),k) - \int_{0}^{\tau_{1} \wedge t} \LL_{k} f(X(s),k) \d s
\end{displaymath}
is a martingale with respect to $\P^{(x)}_{k}$ and hence  $\P^{(1)}$.
On the other hand, using \eqref{eq-Q-hat-operator}, we can write
\begin{align*}
&  \int_{0}^{\tau_{1} \wedge t}   \wdh Q f(X(s), \La(s))\d s  \\    & \ \  = \int_{0}^{\tau_{1}\wedge t}  \int_{\ss} [f(X(s), l) - f(X(s),\La(s))] \nu(\La(s), \d l) \d s \\
     & \ \  = -\int_{0}^{\tau_{1}\wedge t}  \int_{\ss} [f(X(s), l) - f(X(s),\La(s))] \big(n (\d s , \d l) - \nu(\La(s), \d l) \d s\big) \\
    & \ \  \ \ + \int_{0}^{\tau_{1}\wedge t}  \int_{\ss} [f(X(s), l) - f(X(s),\La(s))] n (\d s , \d l)\\
    & \ \ =  -\int_{0}^{\tau_{1}\wedge t}  \int_{\ss} [f(X(s), l) - f(X(s),\La(s))] \mu (\d s , \d l)    \\
    & \ \ \ \
   + f(X(\tau_{1} \wedge t), \La (\tau_{1}\wedge t)) - f(X(\tau_{1}\wedge t), \La(\tau_{1}\wedge t-)).
\end{align*}
Then using the definitions of the operators $\wdh\A$, $\LL_{k}$ and $\wdh Q$,  we have
\begin{align*}
\wdh M_{\tau_{1}\wedge t}^{(f)} & = f (X(\tau_{1}\wedge t), \La(\tau_{1}\wedge t)) -  f(X(0),\La(0)) - \int_{0}^{\tau_{1} \wedge t}  {\wdh\A} f(X(s),\La(s)) \d s \\
    & = f(X(\tau_{1} \wedge t), \La (0)) - f(X(0),\La(0)) - \int_{0}^{\tau_{1} \wedge t} \LL_{\La(0)} f(X(s),\La(0)) \d s \\
     & \quad +  f(X(\tau_{1}\wedge t), \La(\tau_{1}\wedge t))  -  f(X(\tau_{1} \wedge t), \La (0))  \\
     & \quad + \int_{0}^{\tau_{1} \wedge t} \LL_{\La(0)} f(X(s),\La(0)) \d s - \int_{0}^{\tau_{1} \wedge t}  {\wdh\A} f(X(s),\La(s)) \d s  \\
    &  = f(X(\tau_{1} \wedge t), \La (0)) - f(X(0),\La(0)) - \int_{0}^{\tau_{1} \wedge t} \LL_{\La(0)} f(X(s),\La(0)) \d s \\
     & \quad +     f(X(\tau_{1}\wedge t), \La(\tau_{1}\wedge t))  -  f(X(\tau_{1} \wedge t), \La (0))  - \int_{0}^{\tau_{1} \wedge t} \wdh Q f(X(s), \La(s))\d s\\
     & =  f(X(\tau_{1} \wedge t), \La (0)) - f(X(0),\La(0)) - \int_{0}^{\tau_{1} \wedge t} \LL_{\La(0)} f(X(s),\La(0)) \d s \\
    &   \ \ \ +  \int_{0}^{\tau_{1}\wedge t}  \int_{\ss} [f(X(s), l) - f(X(s),\La(s))]  \mu (\d s , \d l).
\end{align*}
 Recall  that $\mu$ is a martingale measure with respect to $\Q^{(k)}$ and hence $\P^{(1)}$. Thus it follows that $\wdh  M_{\tau_{1}\wedge \cdot}^{(f)} $
is a martingale with respect to $ \P^{(1)}$.

Next,
\begin{displaymath}
f(X(\tau_{2} \wedge t), \La (\tau_{1})) - f(X(\tau_{1}),\La(\tau_{1})) - \int_{\tau_{1} }^{\tau_{2} \wedge t} L_{\La(\tau_{1})} f(X(s),\La(\tau_{1})) \d s, \quad t\ge \tau_{1}
\end{displaymath}  is a martingale with respect to $\P_{\La(\tau_{1})}^{(X(\tau_{1}))}\times
\bQ^{(\La(\tau_{1}))}$. Then a similar argument as above gives that
$$ f(X(\tau_{2}\wedge t), \La(\tau_{2}\wedge t)) - f(X(\tau_{1}), \La(\tau_{1})) - \int_{\tau_{1}}^{\tau_{2}\wedge t} {\wdh\A} f (X(s),\La(s)) \d s, \quad t \ge \tau_{1}$$ is a martingale with respect to  $\P_{\La(\tau_{1})}^{(X(\tau_{1}))}\times
\bQ^{(\La(\tau_{1}))}$.  Notice that the above displayed equation is equal to $ \wdh  M^{(f)}_{\tau_{2}\wedge t} -\wdh   M_{\tau_{1}\wedge t}^{(f)}.$ Then in view of Theorem 6.1.2 of \cite{Stroock-V}, $\wdh  M^{(f)}_{\tau_{2}\wedge \cdot}$ is a martingale with respect to $\P^{(2)}$.
In a similar fashion, we can show that $\wdh  M^{(f)}_{\tau_{n}\wedge \cdot}$ is a martingale with respect to $\P^{(n)}$ for any $n \ge 1$. 

Next we show that  $\lim_{n\to \infty} \P^{(n)} \{ \tau_{n } \le t \} =0$ for any $t\ge 0$. To this end, we consider  functions of the form $f(x,k) = g(k)$, where $g \in \B(\ss)$. Then $M^{(f)}_{\tau_{n}\wedge \cdot}$ is a $\P^{(n)}$ martingale. But for any $t \ge 0$, $$\wdh M^{(f)}_{t} = N^{(g)}_{t} = g(\La (t) ) - g(\La(0)) - \int_{0}^{t} \wdh Q g(\La(s)) \d s$$  is  a martingale with respect to $\Q^{(k)}$. In particular, $ N^{(g)}_{\tau_{n}\wedge \cdot}$  is  a martingale with respect to $\Q^{(k)}$  as well.  On the other hand, for any $A \in \N$, we define
$\wdh \Q (A) : = \P^{(n)}\{ D([0,\infty), \R^{d}) \times A \}$.  Then $N^{(g)}_{\tau_{n}\wedge \cdot}$ is a martingale with respect to $\wdh \Q$. By   the uniqueness result for the martingale problem for $\wdh Q$ in Lemma \ref{lemma-Q}, we have $\wdh \Q = \Q^{(k)}$.
Therefore it follows that \begin{displaymath}
\P^{(n)} \{ \tau_{n } \le t \} = \wdh \Q\{ \tau_{n } \le t   \} =  \Q^{(k)}\{ \tau_{n } \le t   \}  \to 0, \text{ as  } n \to \infty.
\end{displaymath}

 Recall that the probabilities $\P^{(n)}$ constructed in \eqref{Pn} satisfies $\P^{(n+1)} = \P^{(n)}$ on $\F_{\tau_{n}}$.
Hence by Tulcea's extension theorem (see, e.g., \cite[Theorem 1.3.5]{Stroock-V}),  there exists a
unique $\wdh \bP$ on $(\Omega,\F)$ such that $\wdh \bP$ equals $\bP^{(n)}$ on
${\F}_{\tau_{n}}$.
Thus it follows that $\wdh M^{(f)}_{\tau_{n}\wedge \cdot}$ is a martingale with respect to $\wdh \P$ for every $n \ge 1$. In addition,   for any $t \ge 0$, we have \begin{equation}
\label{eq-tau-n-to-infty}
 \wdh  \P \{\tau_{n}  \le t \} = \P^{(n)} \{\tau_{n}  \le t \}  =0.
\end{equation} Thus $\tau_{n} \to \infty$ a.s. $\wdh\P$ and hence $\wdh M^{(f)}_{\cdot}$ is a martingale with respect to $\wdh \P$. This establishes that $\wdh \P$
is the desired martingale solution staring from $(x,k)$ to the   martingale problem for $\wdh \A$.  When we
wish to emphasize the initial data dependence $X(0)=x$ and
$\La(0)=k$, we write this martingale solution as $\wdh \bP^{(x,k)}$.

{\em Step 2.} Next we show that there is at most one solution to the martingale problem associated with $\wdh \A$ starting from $(x,k)$. To this purpose, we let $  \wdt \P^{(x,k)}\in \mathcal P(\Omega, \F)$ be another  solution to the martingale problem associated with $\wdh \A$ starting from $(x,k)$. We show that $\wdh \P^{(x,k)}$ and $  \wdt \P^{(x,k)} $ agree on $\F_{\tau_{1}}$.
  Recall that $\wdh \P^{(x,k)}$ agrees with $\P^{(1)}= \P^{(x)}_{k} \times \Q^{(k)}$ on $\F_{\tau_{1}}$ and that $\P^{(x)}_{k}\in \mathcal P(D([0,\infty); \R^{d}))$ is the unique solution to the martingale problem associated with $\LL_{k}$ starting from $x$.  Also notice that  any $A\in \F_{\tau_{1}}$ is necessarily of the form $A_{1} \times \delta_{k} $, where $A_{1} \subset D([0, \infty), \R^{d})$ and $\delta_{k} $  contains all functions $\omega$  in $  D([0,\infty), \ss) $ satisfying $\omega(t) = k$ for all $0\le t < \tau_{1}$ and $\omega(\tau_{1}) \in \ss \backslash \{k\}$. Since $\Q^{(k)} (\delta_{k}) =1$,    it follows that
\begin{align}\label{eq1-uniqueness}
\wdh \P^{(x,k)} (A) =   \P^{(x)}_{k} \times \Q^{(k)}  (A_{1} \times \delta_{k})  =    \P^{(x)}_{k} (A_{1}).
\end{align} On the other hand, since $\wdt \P^{(x,k)}$ is a solution to the martingale problem associated with $\wdh \A$ starting from $(x,k)$, for any $g\in C^{2}_{c}(\R^{d})$, $\wdh M^{(g)}_{t}$ is a $\wdt \P^{(x,k)}$ martingale. In particular,
\begin{align*}
\wdh M^{(g)}_{\tau_{1}\wedge t}  & = g(X(t\wedge \tau_{1})) - g(X(0)) - \int_{0}^{\tau_{1}\wedge t}{\wdh\A} g(X(s)) \d s \\ &=  g(X(t\wedge \tau_{1})) - g(X(0)) - \int_{0}^{\tau_{1}\wedge t} \LL_{k} g(X(s)) \d s
\end{align*}  is a  $\wdt \P^{(x,k)}$ martingale.  Now for any $A_{1} \subset D([0, \infty), \R^{d})$ with $A_{1} \in \G$, we define \begin{equation}
\label{eq2-uniqueness}
 \widetilde \P (A_{1}) : = \wdt \P^{(x,k)} (A_{1}\times \delta_{k}).
\end{equation} Then $\wdh  M^{(g)}_{\tau_{1}\wedge \cdot} $ is also a $\widetilde \P$ martingale and  hence $\widetilde\P$ is  a solutions to the martingale problem associated with $\LL_{k} $ starting from $x$ up to  $\tau_{1}$.  Now by the uniqueness of the martingale solution to $\LL_{k}$ starting from $x$, we conclude from \eqref{eq1-uniqueness} and \eqref{eq2-uniqueness}
 that $\wdh \P^{(x,k)} (A)  =\wdt \P^{(x,k)} (A) $ for any $A \in \F_{\tau_{1}}$. This shows that the martingale solution to $\wdh\A$ starting from $(x,k)$ is uniquely determined on  $\F_{\tau_{1}}$.

Now suppose that  the martingale solution $\wdh\P^{(x,k)}$ to $\wdh\A$ starting from $(x,k)$ is uniquely determined on  $\F_{\tau_{n}}$. By virtue of Theorem 6.2.1 of \cite{Stroock-V} (also Lemma 5.4.19 of \cite{Karatzas-S}), there is a $\wdh \P^{(x,k)}$-null set $N \in \F_{\tau_{n}}$ such that $$\wdh \P^{(X(\tau_{n}(\omega)), \La(\tau_{n}(\omega)))}: =\delta_{(X(\tau_{n}(\omega)), \La(\tau_{n}(\omega)), \omega)}\otimes_{\tau_{n}(\omega)} \wdh  \P_{\omega}$$ solves the martingale problem for $\wdh \A$ starting from $(X(\tau_{n}(\omega)), \La(\tau_{n}(\omega)))$ whenever $\omega \notin N$, where $\wdh \P_{\omega}$ is the regular conditional probability  distribution of $\wdh \P^{(x,k)}$ given $\F_{\tau_{n}}$, whose existence follows from \cite[Theorem 5.3.18]{Karatzas-S}. By the argument in the previous paragraph, $\wdh  \P^{(X(\tau_{n}(\omega)), \La(\tau_{n}(\omega)))}$ is uniquely determined on $\F_{\tau_{n+1}}$.  Note that by virtue of Theorem 6.1.2 of \cite{Stroock-V},
\begin{displaymath}
\wdh \P^{(x,k)} =\wdh  \P^{(x,k)}\otimes_{\tau_{n}(\cdot)} \wdh \P^{(X(\tau_{n}(\cdot)), \La(\tau_{n}(\cdot)))},
\end{displaymath} In other words, the right-hand side of the above displayed equation satisfies 
\begin{itemize}
  \item[(i)] $\wdh \P^{(x,k)}\otimes_{\tau_{n}(\cdot)}\wdh  \P^{(X(\tau_{n}(\cdot)), \La(\tau_{n}(\cdot)))} (A) = \wdh \P^{(x,k)} (A)$,  for any $A\in \F_{\tau_{n}}$, and
  \item[(ii)]  $\delta_{(X(\tau_{n}(\omega)), \La(\tau_{n}(\omega)), \omega)}\otimes_{\tau_{n}(\omega)}\wdh  \P_{\omega}$ is a regular conditional probability distribution of $ \wdh \P^{(x,k)}\otimes_{\tau_{n}(\cdot)} \wdh \P^{(X(\tau_{n}(\cdot)), \La(\tau_{n}(\cdot)))}$ given $\F_{\tau_{n}}$.
\end{itemize} Thus by the induction hypothesis, we conclude that $\wdh \P^{(x,k)}$
 is uniquely determined on $\F_{\tau_{n+1}}$.

 Now we define for any $n \in \mathbb N$ and $A \in \F_{\tau_{n}}$ that $\P_{n}(A) : = \wdh \P^{(x,k)}(A)$. Apparently $\P_{n}$ satisfies that  $\P_{n}=  \P_{ n+1}$ on $\F_{\tau_{n}}$ and that for any $t\ge 0$,  $\P_{n} \{ \tau_{n }\le t\} =\wdh  \P^{(x,k)} \{ \tau_{n} \le t\} \to 0$ as $n \to \infty$, where we used \eqref{eq-tau-n-to-infty}.  Therefore
 by Tulcea's extension theorem (e.g., \cite[Theorem 1.3.5]{Stroock-V}), the sequence $\P_{n}$ has a unique extension $\wdh \P$ on $(\Omega, \F)$ such that $\wdh \P = \P_{n}$ on $\F_{\tau_{n}}$. The measure $\wdh \P$ solves the martingale problem for the operator $\wdh \A$ starting from $(x,k)$. This completes the proof.
\end{proof}

\section{Martingale Solution: General Case}\label{sect-general}

In this section we construct the martingale solution for the general case.
To proceed, for any given $t\ge 0$, we define a function $M_{t}$ on the sample path space as follows:
\begin{equation}\label{Mt}
\barray
M_{t}\bigl(X(\cdot),\La (\cdot) \bigr)\ad :=
\prod_{i=0}^{n(t)-1} q_{\La (\tau_{i})\La (\tau_{i+1})} \bigl(X(\tau_{i+1})\bigr)\\
\aad \quad \times \displaystyle \exp \biggl(-\sum_{i=0}^{n(t)}
\int_{\tau_{i}}^{\tau_{i+1}\wedge t} \bigl[q_{\La (\tau_{i})}
(X(s))-n_{0}+1 \bigr] \d s \biggr),
\earray
\end{equation}
where
$$q_{k}(x)= \sum_{l \in \ss\setminus \{k\}} q_{kl}(x),\quad n(t)= \max \{i \in \mathbb N: \tau_{i} \le t\},$$
and $\{\tau_{i}\}$ is the sequence of stopping times defined in  \eqref{tau}. In case $n(t) = 0$, we use the convention that $\prod_{i=0}^{-1} a_{i} : = 1$ in \eqref{Mt}.

\begin{Lemma} \label{Mtmartingale}
We have that $\bigl(M_{t}, \F_{t}, \wdh{\bP}\bigr)$ is a non-negative martingale with mean one.
\end{Lemma}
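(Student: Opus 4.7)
To prove that $(M_t,\F_t,\wdh\bP)$ is a nonnegative $\wdh\bP$-martingale with $\E^{\wdh\bP}[M_t]=1$, my plan is to: (i) use It\^o's formula for pure-jump processes to exhibit $M_t$ as a nonnegative $\wdh\bP$-local martingale localized by $\tau_n$; (ii) pass to the limit $n\to\infty$ using the Poisson-type decay of $\wdh\bP(\tau_n\le t)$ to show $\E^{\wdh\bP}[M_t]=1$ for every $t\ge 0$; and (iii) invoke the standard fact that a nonnegative supermartingale with constant expectation is in fact a martingale. Nonnegativity and $M_0=1$ are immediate from $q_{kl}(x)\ge 0$ for $k\ne l$ together with positivity of the exponential factor. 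Assumption \ref{assump-Lhasonlyone}(ii) implies $\sum_{l\ne k}q_{kl}(x)=-q_{kk}(x)\le C$ uniformly in $(x,k)$, so in particular $q_{kl}(x)\le C$ for all $k\ne l$; combined with $q_k(x)\ge 0$ this yields the deterministic bound $M_{t\wedge\tau_n}\le C^{n-1}e^{(n_0-1)t}$ for all $n\ge 1$ and $t\ge 0$.

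For (i), I would apply It\^o's formula to the product-exponential expression \eqref{Mt}. On each interval $(\tau_i,\tau_{i+1})$ the continuous drift of $M_t$ is $-M_t[q_{\La(t)}(X(t))-(n_0-1)]$, while the compensator of the jumps of $M_t$ at the switching times equals $M_{t-}\sum_{l\ne\La(t-)}(q_{\La(t-),l}(X(t))-1)=M_t[q_{\La(t)}(X(t))-(n_0-1)]$, precisely because the $\widehat Q$-matrix in \eqref{eq-Q-hat} has $\widehat q_{kl}=1$ for all $l\ne k$. These two terms cancel and, writing $\mu$ for the compensated martingale measure of \eqref{eq-tilde-mu}, one gets the representation
\[
M_t=1+\int_0^t\int_\ss M_{s-}\bigl(q_{\La(s-),l}(X(s))-1\bigr)\mu(\d s,\d l).
\]
As an alternative to It\^o and as a sanity check, one can verify $\E^{\wdh\bP}[M_{t\wedge\tau_{n+1}}\mid\F_{\tau_n}]=M_{t\wedge\tau_n}$ directly from the product construction \eqref{Pn}: conditionally on $\F_{\tau_n}$ and on $\{\tau_n\le t\}$, $\sigma_1:=\tau_{n+1}-\tau_n\sim\mathrm{Exp}(n_0-1)$ is independent of $X$ and $\La(\tau_{n+1})$ is uniform on $\ss\setminus\{\La(\tau_n)\}$ independent of both; averaging first over $\La(\tau_{n+1})$ replaces $q_{\La(\tau_n),\La(\tau_{n+1})}$ by $q_k/(n_0-1)$ (with $k=\La(\tau_n)$), the $(n_0-1)$'s cancel, and the remaining expectation reduces, path by path in $X$, to the identity
\[
\exp\Bigl(-\int_0^h q_k(\tilde X_u)\,\d u\Bigr)+\int_0^h q_k(\tilde X_s)\exp\Bigl(-\int_0^s q_k(\tilde X_u)\,\d u\Bigr)\d s=1,
\]
where $h=t-\tau_n$ and $\tilde X_\cdot=X(\tau_n+\cdot)$, which is just the fundamental theorem of calculus applied to $s\mapsto-\exp(-\int_0^s q_k(\tilde X_u)\,\d u)$. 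Either way, on $\{s\le\tau_n\}$ the integrand $M_{s-}(q_{\La(s-),l}-1)$ is bounded, so $M_{\cdot\wedge\tau_n}$ is a bounded nonnegative $\wdh\bP$-martingale; in particular $\E^{\wdh\bP}[M_{t\wedge\tau_n}]=1$, and Fatou's lemma then makes $M_t$ a nonnegative $\wdh\bP$-supermartingale.

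For (ii) and (iii), I would decompose
\[
1=\E^{\wdh\bP}[M_{t\wedge\tau_n}]=\E^{\wdh\bP}\bigl[M_t\one_{\{\tau_n>t\}}\bigr]+\E^{\wdh\bP}\bigl[M_{\tau_n}\one_{\{\tau_n\le t\}}\bigr].
\]
Since $\tau_n\to\infty$ $\wdh\bP$-a.s.\ (see \eqref{eq-tau-n-to-infty}), monotone convergence sends the first summand to $\E^{\wdh\bP}[M_t]$. For the second, $\La$ under $\wdh\bP$ is a CTMC with constant jump rate $n_0-1$, so $n(t)$ is $\mathrm{Poisson}((n_0-1)t)$-distributed and $\wdh\bP(\tau_n\le t)=\wdh\bP(n(t)\ge n)$ is the upper Poisson tail; combined with $M_{\tau_n}\one_{\{\tau_n\le t\}}\le C^{n-1}e^{(n_0-1)t}$, Stirling's formula yields $\E^{\wdh\bP}[M_{\tau_n}\one_{\{\tau_n\le t\}}]\le 2(C(n_0-1)t)^n/n!\to 0$. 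Hence $\E^{\wdh\bP}[M_t]=1$ for every $t\ge 0$, and paired with the supermartingale property from (i) this upgrades $M$ to a true $\wdh\bP$-martingale. The main obstacle will be precisely the vanishing of this boundary term $\E^{\wdh\bP}[M_{\tau_n}\one_{\{\tau_n\le t\}}]$: it relies on both the uniform boundedness of the $q_{kl}$ (Assumption \ref{assump-Lhasonlyone}(ii)) and the super-exponential decay of the Poisson tail of $\wdh\bP(\tau_n\le t)$, without either of which the nonnegative local martingale $M$ could fail to be a true martingale.
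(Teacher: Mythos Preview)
Your argument is correct and in fact more careful in places than the paper's own proof. Both routes arrive at the same stochastic integral representation
\[
M_t=1+\int_0^t\!\int_{\ss} M_{s-}\bigl(q_{\La(s-),l}(X(s))-1\bigr)\,\mu(\d s,\d l),
\]
but the paths diverge in two respects.

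First, the paper reaches this identity by writing $M_t=e^{Z(t)}$ with $Z(t)$ involving $\log q_{\La(s-)l}(X(s))$, which forces an auxiliary $\e$-perturbation $q_{kl}^{\e}=q_{kl}+\e$ to handle the possibility $q_{kl}(x)=0$, followed by a limiting argument. Your direct application of It\^o's formula to the product--exponential expression \eqref{Mt} (and your alternative bare-hands verification of $\E^{\wdh\bP}[M_{t\wedge\tau_{n+1}}\mid\F_{\tau_n}]=M_{t\wedge\tau_n}$ via the product construction \eqref{Pn}) sidesteps this entirely, since neither computation requires taking logarithms of the $q_{kl}$.

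Second, the paper simply asserts from the stochastic integral representation that $M_t$ is a true martingale with mean one, leaving the passage from local to true martingale implicit. You make this passage explicit: localize by $\tau_n$, use the deterministic bound $M_{t\wedge\tau_n}\le C^{n}e^{(n_0-1)t}$ (your exponent should be $n$ rather than $n-1$, since $n(\tau_n)=n$, but this is immaterial), and then kill the boundary term $\E^{\wdh\bP}[M_{\tau_n}\one_{\{\tau_n\le t\}}]$ via the Poisson tail $\wdh\bP(n(t)\ge n)$. This is a genuine addition of rigor; without the uniform bound on the $q_{kl}$ from Assumption~\ref{assump-Lhasonlyone}(ii) and the super-exponential decay of the Poisson tail, the step the paper takes for granted could fail. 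Your argument makes the role of these hypotheses transparent.
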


\begin{proof} {\em Step 1.} We first observe that if $q_{kl}(x) > 0$ for all $k\not = l$ and $x\in \R^{d}$, then \begin{align*}
 \prod_{i=0}^{n(t)-1} q_{\La (\tau_{i})\La (\tau_{i+1})} \bigl(X(\tau_{i+1})\bigr) & = \exp \biggl\{ \sum_{i=0}^{n(t) -1}  \log q_{\La (\tau_{i})\La (\tau_{i+1})} \bigl(X(\tau_{i+1})\bigr)\biggr\} \\
    &  =  \exp \biggl\{\int_{[0,t] \times \ss} \log q_{\La(s-)l}\bigl(X(s)\bigr)n(\d s,\d l) \biggr\},
\end{align*}
 where  $n(t,A)$ is the  Poisson random measure  defined in \eqref {eq-n-defn}.
 Then it follows from the definition of $M$ in \eqref{Mt} that \begin{equation}\label{eq-Mt-exponential}
M_{t}\bigl(X(\cdot),\La(\cdot)\bigr) = \exp\{Z(t)\},  \end{equation} where $$ Z(t) : =
  \int_{[0,t] \times \ss} \log q_{\La(s-)l}\bigl(X(s)\bigr)n(\d s,\d l)
-\int_{0}^{t} \bigl[q_{\La(s)}
(X(s))-n_{0}+1\bigr]\d s, $$
Now we apply  It\^{o}'s formula for jump processes (see, e.g., \cite[Theorem II.5.1]{IkedaW-89}) to the process $M_{t}$:
\begin{align}
\nonumber \lefteqn{M_{t}\bigl(X(\cdot),\La (\cdot) \bigr) -1 = e^{Z(t) } - e^{Z(0)}} \\
 \label{eq1-Mt}  &\ \  = \int_{0}^{t} \int_{\ss}  e^{Z(s-)} [q_{\La (s-)l} \bigl(X(s)\bigr)-1] n(\d s, \d l) -  \int_{0}^{t}  e^{Z(s)}  \bigl[q_{\La(s)} (X(s))-n_{0}+1\bigr]\d s.    \end{align}

  Recall from Section \ref{sect-special} that for any $s\ge 0$,  $\wdh \P\{ \La(s) = l, \La(s) \not= \La(s-) \} = \frac{1}{n_{0}-1}$.
Thus we have
\begin{align*}
\E^{\wdh \P} [n(t,A)] & = \E^{\wdh \P} \Biggl[\sum_{s \le t} {\mathbf{1}}_{\{\La(s) \in A, \La(s)\neq
\La(s-)\}}\Biggr]
  =  \E^{\wdh \P} \Biggl[\sum_{k\in \ss}\sum_{s \le t} {\mathbf{1}}_{\{\La(s) \in A, \La(s)\neq
\La(s-), \La(s-) = k\}}\Biggr] \\
& = (n_{0}-1)\int_{0}^{t} \int_{A}  \frac{1}{n_{0}-1}  \d l \d s = \int_{0}^{t}\int_{A}  \d l  \d s,
\end{align*} where $\d l$ is the counting measure on $\ss$.
  Then it follows that
  \begin{align*}
\int_{0}^{t}  e^{Z(s)}  \bigl[q_{\La(s)} (X(s))-n_{0}+1\bigr]\d s & =  \int_{0}^{t} e^{Z(s)} \sum_{l \not= \La(s-)} \left[ q_{\La(s-) l} (X(s)) - 1\right] \d s \\
 & = \int_{0}^{t}\int_{\ss} e^{Z(s)}\left[ q_{\La(s-) l} (X(s)) - 1\right] \d l \d s.
\end{align*}  Putting these observations into \eqref{eq1-Mt} and using \eqref{eq-Mt-exponential}, we obtain
\begin{align}
\label{eq-Mt-ito}   M_{t}\bigl(X(\cdot),\La (\cdot) \bigr) -1  = \int_{[0,t] \times \ss}
M_{s-}\bigl(X(\cdot),\La (\cdot) \bigr) \left[q_{\La (s-)l}
\bigl(X(s)\bigr)-1\right] \widetilde{n}(\d s, \d l),
\end{align} where $\widetilde{n}(t, A)=n(t, A)- \E^{\wdh \P} [n(t, A)]$ is the
compensated Poisson random measure with respect to $\wdh \P$ and also a martingale measure on
$[0, \infty) \times \ss$.

{\em Step 2.}  In general,  there may exist some $i\not= j$ and $x\in \R^{d}$ so that $q_{ij}(x) =0$. We define $q_{kl}^{\e}(x) : = q_{kl}(x) + \e$ for all $k,l \in \ss$ with $k\not=l$ and $x\in \R^{d}$. Also, we let $q_{kk}^{\e}(x)  :=q_{kk}(x) - (n_{0}-1) \e $ for all $k\in \ss $ and $x\in \R^{d}$.   Then  as $\e \downarrow 0$, we have
  \begin{displaymath}
q_{kl}^{\e}(x) \to q_{kl}(x), \text{ and } q_{kk}^{\e}(x) \to q_{kk}(x)
\end{displaymath} uniformly with respect to $x\in \R^{d}$
for all $l \not=k  \in\ss$.
Next we define \begin{displaymath}
M_{t}^{\e}(X(\cdot),\La(\cdot)) : = \exp \biggl\{\int_{[0,t]\times \ss} \log q^{\e}_{\La(s-)l} (X(s)) n(\d l, \d s)  - \int_{0}^{t} \left[ q_{\La(s)}^{\e} (X(s)) - n_{0}+ 1\right] \d s \biggr\}.
\end{displaymath}
Thanks to  Assumption \ref{assump-Lhasonlyone} and the bounded convergence theorem, we have    $M_{t}^{\e} (X(\cdot),\La(\cdot))  \to M_{t}(X(\cdot),\La(\cdot)) $ as $\e \downarrow 0$. Moreover, by \eqref{eq-Mt-ito}  in  Step 1, we have
 $$ M_{t}^{\e}(X(\cdot),\La(\cdot)) - 1 = \int_{0}^{t}\int_{\ss}  M_{s-}^{\e}(X(\cdot),\La(\cdot))  \left[q_{\La (s-)l}^{\e}
\bigl(X(s)\bigr)-1\right] \widetilde{n}(\d s, \d l).$$
Now passing to the limit as $\e \downarrow 0$, we obtain from the bounded convergence theorem that
\begin{equation}\label{eq2-Mt-ito}
 M_{t}\bigl(X(\cdot),\La (\cdot) \bigr) -1  = \int_{[0,t] \times \ss}
M_{s-}\bigl(X(\cdot),\La (\cdot) \bigr) \left[q_{\La (s-)l}
\bigl(X(s)\bigr)-1\right] \widetilde{n}(\d s, \d l).
\end{equation}

{\em Step 3.} From \eqref{eq2-Mt-ito}, we can see that
$M_{t}(X(\cdot),\La (\cdot) )$ is a martingale with mean $1$ under $
\wdh \P$. This completes the proof.  \end{proof}

\begin{Lemma}\label{lem-Mt-integrable}
For any $T > 0$ and $(x,k)\in \R^{d} \times \ss$, the function $M_{T}(X(\cdot),\La(\cdot))$ defined in  \eqref{Mt} is integrable with respect to the measure $\wdh \P$.
\end{Lemma}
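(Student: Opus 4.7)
The plan is to exploit the explicit form \eqref{Mt} of $M_T$ together with the uniform boundedness of the off-diagonal rates granted by Assumption \ref{assump-Lhasonlyone}(ii), and then invoke the fact that under $\wdh \P$ the number of switches $n(T)$ has a Poisson distribution of known parameter. This should yield a closed-form upper bound on $\E^{\wdh \P}[M_T]$, bypassing any delicate stochastic-analytic reasoning.

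First I would extract a uniform bound on the rates. By Assumption \ref{assump-Lhasonlyone}(ii), each function $q_{kk}(x) \leq 0$ is bounded below uniformly in $x$, so that $q_k(x) = -q_{kk}(x) = \sum_{l \neq k} q_{kl}(x)$ is uniformly bounded above by some finite constant $K$. Since $q_{kl}(x) \geq 0$ for $l \neq k$ and $\#\ss = n_0 < \infty$, this yields $0 \leq q_{kl}(x) \leq K$ for every $k, l \in \ss$ and every $x \in \R^d$.

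Next I would bound $M_T$ pathwise. The product in \eqref{Mt} contains $n(T)$ factors each lying in $[0,K]$, so it is bounded by $K^{n(T)}$. For the exponential factor, observe that the intervals $[\tau_i, \tau_{i+1} \wedge T]$ with $i = 0, 1, \ldots, n(T)$ partition $[0, T]$ and that $q_{\La(\tau_i)}(X(s)) \geq 0$; hence the sum inside the exponential is at least $-(n_0 - 1)T$, and the exponential factor is at most $e^{(n_0 - 1)T}$. Combining these observations yields the deterministic majorant
\begin{equation*}
M_T(X(\cdot), \La(\cdot)) \leq K^{n(T)}\, e^{(n_0 - 1)T}.
\end{equation*}

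Finally, from the construction of $\wdh \P$ in Section \ref{sect-special}, the coordinate $\La$ under $\wdh \P$ has the same law as under $\wdh \Q^{(k)}$, and because the total rate out of each state under $\wdh Q$ equals $n_0 - 1$, the jump count $n(T)$ is Poisson-distributed with parameter $(n_0 - 1)T$. Hence the Poisson moment generating function gives
\begin{equation*}
\E^{\wdh \P}\bigl[K^{n(T)}\bigr] = \exp\bigl((K - 1)(n_0 - 1)T\bigr),
\end{equation*}
so that $\E^{\wdh \P}[M_T] \leq \exp(K(n_0 - 1)T) < \infty$, as desired. No serious technical obstacle arises: the whole argument is essentially a reduction to the Poisson moment generating function, the only real ingredient being the uniform bound on $q_{kl}$ harvested from Assumption \ref{assump-Lhasonlyone}(ii).
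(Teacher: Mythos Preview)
Your argument is correct. The pathwise bound $M_T \le K^{n(T)} e^{(n_0-1)T}$ follows exactly as you describe from Assumption~\ref{assump-Lhasonlyone}(ii), and the identification of the $\La$-marginal of $\wdh\P$ with $\Q^{(k)}$ is precisely what the paper establishes in the proof of Theorem~\ref{special}; under $\Q^{(k)}$ every state has total exit rate $n_0-1$, so $n(T)$ is indeed Poisson with parameter $(n_0-1)T$ and the moment generating function computation is valid.

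The paper itself does not give a proof of this lemma, referring instead to \cite[Lemma~4.4]{Xi-09}. Your direct bounding argument is the natural elementary route and is self-contained, requiring nothing beyond the uniform bound on the rates and the Poisson law of the switch count already available from Section~\ref{sect-special}. One minor notational slip: the paper writes $\Q^{(k)}$ rather than $\wdh\Q^{(k)}$ for the law of the $\wdh Q$-chain. Also note that Lemma~\ref{Mtmartingale} already asserts $\E^{\wdh\P}[M_T]=1$, so in the paper's logical flow the integrability is in fact subsumed by the preceding lemma; your argument, however, gives an independent and more explicit justification that does not rely on first upgrading the local martingale in \eqref{eq2-Mt-ito} to a true martingale.
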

\begin{proof}
The proof is similar to that of \cite[Lemma 4.4]{Xi-09} and we shall omit the details here.
\end{proof}

Let $\e>0$ and notice that in view of \eqref{eq-Levy-measure},
$\nu(x,k, \R^{d} \backslash B(0, \e) )< \infty$ for each $(x,k)\in \R^{d}\times \ss$.
Then we can define a sequence of stopping times as follows. Let $\zeta_{0}^{(\e)}:=0 $ and for $n \ge 0$,\begin{equation}
\label{eq-jump-times}
 \zeta_{n+1}^{(\e)}: = \inf\{t \ge \zeta_{n}^{(\e)}: | \Delta X(t) | = | X(t) - X(t-)| \ge \e \}.
\end{equation}

\begin{Lemma}\label{lem-big-jump-distribution}
Let 
$X_{\e}(t)  := X(t) - \int_{|y| \ge \e} y \eta(t, \d y)$ for $t \ge 0$  and define $\F_{\zeta_{1}^{(\e)}-} : = \sigma\{X_\e(t\wedge \zeta_{1}^{(\e)}), \La(t\wedge \zeta_{1}^{(\e)}): t \ge 0 \}$. Then we  have
\begin{align}
\label{1st-switch}
  & \wdh\P\{\tau_{1} > t\} = \exp\{-(n_{0}-1) t \},   \\
\label{1st-big-jump}    &   \wdh \P\Big\{ \zeta_{1}^{(\e)} > t| \F_{\zeta_{1}^{(\e)}-}\Big\} =\exp\biggl\{ -\int_{0}^{t} \nu(X_{\e}(u \wedge \zeta_{1}^{(\e)}), \La(u\wedge \zeta_{1}^{(\e)}), \R^{d}\backslash B(0, \e)) \d u\biggr\}.
\end{align}
\end{Lemma}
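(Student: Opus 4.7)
The plan is to treat the two identities separately: (1st-switch) follows directly from the construction of $\wdh\P$ in Theorem 3.2, while (1st-big-jump) will be extracted from the exponential martingales of Theorem 2.5.

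For (1st-switch), I would invoke the argument in Step 1 of the proof of Theorem 3.2, specialized to test functions $f(x,k)=g(k)$ with $g\in\B(\ss)$. That argument shows that the $\La$-marginal of $\wdh\P$ coincides with $\Q^{(k)}$ on $D([0,\infty),\ss)$. Since the event $\{\tau_1>t\}$ depends only on $\La$, and under $\Q^{(k)}$ the holding time at the initial state $k$ is exponential with rate $-\wdh q_{kk}=n_0-1$ (as recorded in the paragraph following Lemma 3.1), we conclude $\wdh\P\{\tau_1>t\}=\Q^{(k)}\{\tau_1>t\}=e^{-(n_0-1)t}$.

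For (1st-big-jump), set $\Gamma:=\R^d\setminus B(0,\e)$; because $\e>0$, Assumption 2.1(iii) yields $K:=\sup_{(x,k)}\nu(x,k,\Gamma)<\infty$. I would apply Theorem 2.5(c) with $\theta=0$ and $g(y):=\lambda\one_\Gamma(y)$, $\lambda\in\R$, to obtain the $\wdh\P$-martingale
\[
M^\lambda(t):=\exp\Bigl\{\lambda\,\eta(t,\Gamma)-(e^\lambda-1)\int_0^t\nu(X(u),\La(u),\Gamma)\,\d u\Bigr\}.
\]
Since $\eta(\cdot,\Gamma)$ is a simple counting process whose first jump time is precisely $\zeta_1^{(\e)}$, stopping at $t\wedge\zeta_1^{(\e)}$ gives $\eta(t\wedge\zeta_1^{(\e)},\Gamma)=\one_{\{\zeta_1^{(\e)}\le t\}}$; moreover $X(u)=X_\e(u)$ for every $u<\zeta_1^{(\e)}$, so the stopped compensator takes the $\F_{\zeta_1^{(\e)}-}$-measurable form
\[
H:=\int_0^{t\wedge\zeta_1^{(\e)}}\nu\bigl(X_\e(u\wedge\zeta_1^{(\e)}),\La(u\wedge\zeta_1^{(\e)}),\Gamma\bigr)\d u,
\]
and in particular $M^\lambda(\cdot\wedge\zeta_1^{(\e)})\le e^{Kt}$ uniformly for $\lambda\le 0$.

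The last step is to promote the optional-sampling identity to a conditional one: for each $A\in\F_{\zeta_1^{(\e)}-}$, $\E^{\wdh\P}[\one_A\,M^\lambda(t\wedge\zeta_1^{(\e)})]=\wdh\P(A)$, which is equivalent to
\[
\E^{\wdh\P}\bigl[\one_{\{\zeta_1^{(\e)}>t\}}e^{-(e^\lambda-1)H}+\one_{\{\zeta_1^{(\e)}\le t\}}e^\lambda e^{-(e^\lambda-1)H}\bigm|\F_{\zeta_1^{(\e)}-}\bigr]=1.
\]
Letting $\lambda\to-\infty$: the $\{\zeta_1^{(\e)}\le t\}$-term vanishes (because $e^\lambda\to 0$ while the other factor stays bounded) and $-(e^\lambda-1)\to 1$; dominated convergence, justified by the uniform bound just established, produces $\E^{\wdh\P}[\one_{\{\zeta_1^{(\e)}>t\}}e^{H}\mid\F_{\zeta_1^{(\e)}-}]=1$. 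Factoring the $\F_{\zeta_1^{(\e)}-}$-measurable quantity $e^H$ outside the conditional expectation yields the claimed formula $\wdh\P\{\zeta_1^{(\e)}>t\mid\F_{\zeta_1^{(\e)}-}\}=e^{-H}$. The main technical obstacle is the promotion to a conditional martingale identity, since $\zeta_1^{(\e)}$ is totally inaccessible and $\F_{\zeta_1^{(\e)}-}$ is not a stopping-time $\sigma$-algebra in the usual sense; this obstacle is overcome using the continuity of $H$ together with the authors' explicit definition $\F_{\zeta_1^{(\e)}-}=\sigma\{X_\e(\cdot\wedge\zeta_1^{(\e)}),\La(\cdot\wedge\zeta_1^{(\e)})\}$, for which the conditional version of optional sampling at $\zeta_1^{(\e)}$ for the bounded martingale $M^\lambda(\cdot\wedge\zeta_1^{(\e)})$ is standard.
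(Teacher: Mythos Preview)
Your argument for \eqref{1st-switch} coincides with the paper's. For \eqref{1st-big-jump} your route is correct but genuinely different. The paper uses the \emph{linear} martingale $\wdt\eta(t,\Gamma)=\eta(t,\Gamma)-\int_0^t\nu(X(u),\La(u),\Gamma)\,\d u$ from Theorem~\ref{thm-martingales}(d): taking conditional expectations given $\F_{\zeta_1^{(\e)}-}$ and writing $u(t):=\wdh\P\{\zeta_1^{(\e)}>t\mid\F_{\zeta_1^{(\e)}-}\}$, $v(s):=\nu(X_\e(s\wedge\zeta_1^{(\e)}),\La(s\wedge\zeta_1^{(\e)}),\Gamma)$, it derives the integral equation $u(t)+\int_0^t u(s)v(s)\,\d s=1$ and then solves it explicitly to obtain $u(t)=\exp\{-\int_0^t v(r)\,\d r\}$. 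Your approach instead exploits the \emph{exponential} martingale from Theorem~\ref{thm-martingales}(c) and isolates the survival probability directly by letting $\lambda\to-\infty$; this bypasses the integral-equation step at the cost of needing the uniform bound $K=\sup_{(x,k)}\nu(x,k,\Gamma)$ for dominated convergence. Both proofs rest on the same ``promote the stopped-martingale identity to a conditional one given $\F_{\zeta_1^{(\e)}-}$'' step, which the paper also treats as routine.

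One point in your write-up needs tightening: the assertion that $H=\int_0^{t\wedge\zeta_1^{(\e)}}\nu(X_\e(u\wedge\zeta_1^{(\e)}),\La(u\wedge\zeta_1^{(\e)}),\Gamma)\,\d u$ is $\F_{\zeta_1^{(\e)}-}$-measurable is not correct as stated, since the upper limit $t\wedge\zeta_1^{(\e)}$ need not be measurable with respect to the stopped processes. What actually rescues the argument is that after the limit only $\one_{\{\zeta_1^{(\e)}>t\}}e^{H}$ survives, and on $\{\zeta_1^{(\e)}>t\}$ one has $H=\wdt H:=\int_0^{t}\nu(X_\e(u\wedge\zeta_1^{(\e)}),\La(u\wedge\zeta_1^{(\e)}),\Gamma)\,\d u$; this $\wdt H$ is genuinely $\F_{\zeta_1^{(\e)}-}$-measurable, can be pulled outside, and matches the exponent in the lemma's statement (which integrates up to $t$, not $t\wedge\zeta_1^{(\e)}$).
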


\begin{proof}
Equation \eqref{1st-switch} follows directly from the construction of $\wdh \P$ in Theorem \ref{special}.
Now we prove  \eqref{1st-big-jump}.
Let $\Gamma : = \R^{d } \backslash B(0 ,\e)$, and recall $ \eta(t, \Gamma)$ defined in \eqref{eq-eta(t,Gamma)}. Let us also denote  \begin{align*}
  \wdt \eta(t, \Gamma) : &= \eta(t,\Gamma) - \int_{0}^{t} \nu(X(u),\La(u),\Gamma) \d u.
       \end{align*}
      Thanks to Theorem \ref{thm-martingales},   $\wdt \eta(t, \Gamma)$ is a $\wdh\P$-martingale.  Consequently, for any $t \ge 0$, we have
       \begin{equation}\label{eq1-lem33-pf}\begin{aligned}
\E^{\wdh \P} \Big[\eta (t\wedge \zeta_{1}^{(\e)}; \Gamma) \big| \F_{\zeta_{1}^{(\e)}-}\Big] & = \E^{\wdh \P} \biggl[\int_{0}^{t \wedge \zeta_{1}^{(\e)}} \nu(X(u),\La(u), \Gamma) \d u | \F_{\zeta_{1}^{(\e)}-}\biggr] \\
& = \E^{\wdh \P}  \biggl[\int_{0}^{t} \one_{\{ \zeta_{1}^{(\e)} > u\}} \nu(X(u\wedge \zeta_{1}^{(\e)}),\La(u\wedge \zeta_{1}^{(\e)}), \Gamma) \d u | \F_{\zeta_{1}^{(\e)}-}\biggr] \\
& = \int_{0}^{t} \wdh\P \Big\{ \zeta_{1}^{(\e)} > u \big |\F_{\zeta_{1}^{(\e)}-}\Big  \} \nu(X_{\e}(u\wedge \zeta_{1}^{(\e)}), \La(u\wedge \zeta_{1}^{(\e)}), \Gamma) \d u.
\end{aligned}\end{equation}
On the other hand, note that  $$\eta (t\wedge \zeta_{1}^{(\e)}; \Gamma) =\begin{cases}
    1  & \text{ if } \zeta_{1}^{(\e)} \le t, \\
    0  & \text{ otherwise}.
\end{cases}$$ Thus we have
\begin{equation}\label{eq2-lem33-pf}\begin{aligned}\wdh\P\set{\zeta_{1}^{(\e)} > t  \big| \F_{\zeta_{1}^{(\e)}-}} & =\E^{\wdh\P}\Big[\one_{\{\zeta_{1}^{(\e)} > t  \}} \big| \F_{\zeta_{1}^{(\e)}-} \Big]  \\ & = \E^{\wdh \P}\Big[ \big(1- \eta (t\wedge \zeta_{1}^{(\e)}; \Gamma) \big)\big| \F_{\zeta_{1}^{(\e)}-} \Big] =1 - \E^{\wdh \P}\Big[  \eta (t\wedge \zeta_{1}^{(\e)}; \Gamma) \big| \F_{\zeta_{1}^{(\e)}-} \Big]. \end{aligned}\end{equation}
Combining \eqref{eq1-lem33-pf} and \eqref{eq2-lem33-pf}, we arrive at
\begin{equation}\label{eq3-lem33-pf}
1- \wdh\P\set{\zeta_{1}^{(\e)} > t  \big| \F_{\zeta_{1}^{(\e)}-}}  =  \int_{0}^{t} \wdh\P \Big\{ \zeta_{1}^{(\e)} > s \big |\F_{\zeta_{1}^{(\e)}-}\Big  \} \nu(X_{\e}(s\wedge \zeta_{1}^{(\e)}), \La(s\wedge \zeta_{1}^{(\e)}), \Gamma) \d s.
\end{equation}

Let us denote $u(t) : = \wdh\P\{\zeta_{1}^{(\e)} > t  | \F_{\zeta_{1}^{(\e)}-}\} $ and $v(t): = \nu(X_{\e}(t\wedge \zeta_{1}^{(\e)}), \La(t\wedge \zeta_{1}^{(\e)}), \Gamma)$. Then we can rewrite \eqref{eq3-lem33-pf} as $u(t) + \int_{0}^{t} u(s)v(s) \d s =1$, which, in turn, implies that
\begin{align*}
\frac{ \d}{\d t} \biggl( e^{\int_{0}^{t} v(r) \d r} \int_{0}^{t} u(r) v(r) \d r \biggr) = e^{\int_{0}^{t} v(r) \d r} v(t) \biggl[u(t) + \int_{0}^{t} u(r) v(r) \d r \biggr] = e^{\int_{0}^{t} v(r) \d r} v(t).
   \end{align*}
   Then it follows that $$e^{\int_{0}^{t} v(r) \d r} \int_{0}^{t} u(r) v(r) \d r = \int_{0}^{t }  e^{\int_{0}^{s} v(r) \d r} v(s) \d s = e^{\int_{0}^{t} v(r) \d r} - 1,$$ and hence
   \begin{displaymath}
u(t) = 1- \int_{0}^{t} u(s)v(s) \d s = 1- \Big(1 - e^{-\int_{0}^{t} v(r) \d r} \Big) = e^{-\int_{0}^{t} v(r) \d r}.
\end{displaymath} This establishes  \eqref{1st-big-jump} and hence completes the proof of the lemma.
\end{proof}

\begin{Lemma}\label{lem-no-common-jumps}
Let $\e>0$ and define the stopping times $\zeta_{n}^{(\e)}$ as in \eqref{eq-jump-times} and recall the sequence of stopping times $\{ \tau_{n}\}$ defined in \eqref{tau}. Then under $\wdh \P$, $\{\zeta_{n}^{(\e)}, n\ge 1 \}$ and $\{\tau_{n}: n \ge 1 \}$ are mutually disjoint with probability 1.
\end{Lemma}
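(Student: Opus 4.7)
The strategy is to exploit the independent product structure in the construction of $\wdh\P$ from Theorem \ref{special}: on each interval $[\tau_{m-1}, \tau_m]$ the $X$-coordinate evolves as the L\'evy-type process $\P_{\La(\tau_{m-1})}^{(X(\tau_{m-1}))}$ while the holding time $\tau_m - \tau_{m-1}$ is an independent exponential random variable of rate $n_0 - 1$ governed by $\Q^{(\La(\tau_{m-1}))}$. Since an atomless random variable almost surely misses any independent countable random set, this will yield $\wdh\P(\tau_m \in \{\zeta_n^{(\e)}\}_{n \ge 1}) = 0$ for every $m \ge 1$, and a union bound over $m$ then gives the claim.

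For the base case $m = 1$, recall from \eqref{Pn} that $\wdh\P^{(x,k)}$ restricted to $\F_{\tau_1}$ equals the product measure $\P_k^{(x)} \times \Q^{(k)}$. Under $\P_k^{(x)}$ the $X$-coordinate is a L\'evy-type process whose $\e$-large jump times form a countable random set $J$; under $\Q^{(k)}$ the random variable $\tau_1$ is exponential with rate $n_0 - 1$ by \eqref{1st-switch}, hence atomless. Independence of $J$ and $\tau_1$ gives
\begin{displaymath}
\wdh\P(\tau_1 \in J) = \E\bigl[\wdh\P(\tau_1 \in J \mid J)\bigr] = 0.
\end{displaymath}
Any $\zeta_n^{(\e)}$ equal to $\tau_1$ must be one of the L\'evy jumps of the $X$-coordinate on $[0, \tau_1]$ (on this interval the $X$-coordinate under $\wdh\P$ coincides with the L\'evy path of $\P_k^{(x)}$), so it lies in $J$; hence $\wdh\P(\tau_1 \in \{\zeta_n^{(\e)}\}_{n \ge 1}) = 0$.

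The inductive step invokes the strong Markov property: by Theorem 6.1.2 of \cite{Stroock-V} together with \eqref{Pn}, a regular conditional distribution of the post-$\tau_m$ process given $\F_{\tau_m}$ is $\wdh\P^{(X(\tau_m), \La(\tau_m))}$. Applying the base-case argument to this shifted measure shows that $\tau_{m+1} - \tau_m$ almost surely avoids every $\e$-large jump time of the shifted $X$-coordinate; since $\zeta_n^{(\e)} \le \tau_m$ is automatically distinct from $\tau_{m+1}$, this yields $\wdh\P(\tau_{m+1} \in \{\zeta_n^{(\e)}\}_{n \ge 1}) = 0$. The main technical care required is the independence bookkeeping: on $[\tau_{m-1}, \tau_m]$ the $X$-path under $\wdh\P$ coincides with the restriction of the full L\'evy-type trajectory of $\P_{\La(\tau_{m-1})}^{(X(\tau_{m-1}))}$ (which is defined on all of $[0, \infty)$), and this trajectory is independent of the exponential $\tau_m - \tau_{m-1}$ by the product construction in \eqref{Pn}---which is exactly what the atomless-vs-countable argument requires.
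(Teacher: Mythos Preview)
Your proof is correct and takes a genuinely different route from the paper's. The paper argues by discretization: it partitions $[0,T]$ into $M$ equal subintervals, uses the distributional bounds \eqref{1st-switch} and \eqref{1st-big-jump} together with the uniform bound $K_{\e}$ from \eqref{eq-nu-cond-new} to show that the probability of finding both an $\e$-jump and a switch in the same subinterval is at most $(n_{0}-1)K_{\e}/M$, and lets $M\to\infty$. Your argument instead exploits directly the product structure $\P^{(1)}=\P_{k}^{(x)}\times\Q^{(k)}$ built in \eqref{Pn}: the event $\{\tau_{1}\in\{\zeta_{n}^{(\e)}\}_{n\ge1}\}$ equals $\{|\Delta X(\tau_{1})|\ge\e\}\in\F_{\tau_{1}}$, and under $\P^{(1)}$ this asks an independent atomless exponential to land in the countable set of large jumps of the L\'evy-type path, which has probability zero; the strong Markov property at $\tau_{m}$ then iterates the conclusion. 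Your approach is more conceptual and notably does \emph{not} require the conditional distribution \eqref{1st-big-jump} of $\zeta_{1}^{(\e)}$ or the uniform bound $K_{\e}$ (hence not Assumption~\ref{assump-Lhasonlyone}(iii) at this stage). The paper's approach, by contrast, is phrased entirely through quantities coming from the martingale problem (via Theorem~\ref{thm-martingales} and Lemma~\ref{lem-big-jump-distribution}) rather than the explicit piecing construction, and yields an explicit rate; this makes it less tied to the particular way $\wdh\P$ was built.
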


\begin{proof}
It is enough to show that for any $T> 0$, $  \{\zeta_{n}^{(\e)}: n\ge 1, \zeta_{n}^{(\e)} \le T \}$ and $ \{\tau_{n}: n \ge 1, \tau_{n}\le T \}$ are mutually disjoint with probability 1. To this end, we let $M > 1$ and for $m=0,1,\dots, M-1$, we denote
\begin{align*}
& J^{(\e)}((m-1)/M, m/M] : = \max\set{n \in \mathbb N: \zeta_{n}^{(\e)} \le m/M} - \max\set{n \in \mathbb N: \zeta_{n}^{(\e)} \le (m-1)/M},\\
& S((m-1)/M, m/M] : = \max\set{n \in \mathbb N: \tau_{n}  \le m/M} - \max\set{n \in \mathbb N: \tau_{n}  \le (m-1)/M},
\end{align*}
and \begin{align*}
&  A_{m}: = \set{J^{(\e)}((m-1)/M, m/M] \ge 1  },
    & B_{m}: = \set{S((m-1)/M, m/M] \ge 1}.
\end{align*}  Thanks to \eqref{eq-nu-cond-new}, it follows that there exists  some positive constant $K_{\e}$ such that
\begin{equation}
\label{eq-nu-bdd}
\nu(x,k,  \R^{d} \backslash B(0, \e) )\le K_{\e}  < \infty, \text{ for all }(x,k) \in \R^{d}\times \ss.
\end{equation}
Then we have from \eqref{1st-switch}, \eqref{1st-big-jump},  \eqref{eq-nu-bdd}, and Lemma  \ref{lem-big-jump-distribution} that
\begin{align*}
 \lefteqn{ \wdh\P\set{ \{\zeta_{n}^{(\e)}: n\ge 1, \zeta_{n}^{(\e)} \le T \}  \cap  \{\tau_{n}: n \ge 1, \tau_{n}\le T \} \not= \emptyset} }\\
    &   \le \wdh \P \set{\text{there are one   jump and one   switch in the interval } \Big(\frac{ m-1}{M}, \frac{m}{M}\Big] \text{ for  some }m}\\
    &  \le \sum_{m=0}^{M-1} \wdh \P \set{A_{m} \cap B_{m}}
     = \sum_{m=0}^{M-1} \wdh \P (A_{m}) \wdh \P(B_{m}| A_{m})\\
    & \le \sum_{m=0}^{M-1} \biggl(1- \exp\biggl\{- \int_{\frac{m-1}{M}}^{\frac{m}{M}} \nu(X_{\e}(s), \La(s), \R^{d} \backslash B(0,\e)) \d s\biggr\} \biggr)\biggl(1- \exp\bigg\{-(n_{0}-1) \frac{1}{M}\biggr\} \biggr)\\
    & \le \sum_{m=0}^{M-1}   \biggl( 1- \exp\set{- K_{\e} \frac{1}{M}}\biggr)\biggl(1- \exp\bigg\{-(n_{0}-1) \frac{1}{M}\biggr\} \biggr).
\end{align*} Furthermore, using the elementary inequality $1- e^{-a}  \le a$ for $a \ge 0$, we obtain
\begin{align*}
  \wdh\P& \set{ \{\zeta_{n}^{(\e)}: n\ge 1, \zeta_{n}^{(\e)} \le T \}  \cap  \{\tau_{n}: n \ge 1, \tau_{n}\le T \} \not= \emptyset} 
  \le \sum_{m=0}^{M-1} \frac{n_{0}-1}{M}  \frac{K_{\e}}{M}  = \frac{(n_{0}-1)  K_{\e}}{M},
 \end{align*} which can be arbitrarily small since the   denominator $M$ is arbitrary. This implies the desired conclusion and hence completes the proof. \end{proof}

 Note that since $X \in D([0,\infty), \R^{d})$, the set of discontinuity points of $X$ is at most countable for almost all $\omega \in \Omega$, see, e.g. \cite{Rudin-69}. Therefore we can again define the sequence of jump times for $X$ as follows.  Let $\zeta_{0} : =0$ and for $n \ge 0$, define $\zeta_{n+1} : = \inf \{ t \ge \zeta_{n}: |\Delta X(t)| = | X(t) - X(t-) | > 0\}$.
 \begin{Proposition}\label{prop-no-common-jump}
 Under $\wdh \P$, $\{\zeta_{n}: n \ge 1\} $ and $\{\tau_{n}: n \ge 1 \}$ are mutually disjoint with probability $1$.
\end{Proposition}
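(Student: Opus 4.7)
The plan is to bootstrap from Lemma~\ref{lem-no-common-jumps} (disjointness of the $\e$-big jump times from the switching times, for each fixed $\e > 0$) to the full set of jump times by letting $\e \downarrow 0$ along a countable sequence. The key observation is that every true jump of $X$ has some strictly positive size, so it must appear as an $\e_k$-big jump for all sufficiently small $\e_k$ in a fixed null sequence.

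More concretely, I would fix $\e_k = 1/k$ for $k \ge 1$ and, for each $k$, invoke Lemma~\ref{lem-no-common-jumps} to obtain a $\wdh\P$-null set $N_k \in \F$ off of which the collections $\{\zeta_n^{(\e_k)} : n \ge 1\}$ and $\{\tau_n : n \ge 1\}$ are disjoint. Measurability of each event $\{\{\zeta_n^{(\e_k)}\} \cap \{\tau_n\} \ne \emptyset\}$ is inherited from the fact that $\zeta_n^{(\e_k)}$ and $\tau_n$ are all stopping times with respect to $\{\F_t\}$, so each $N_k$ is a legitimate element of $\F$. Setting $N := \bigcup_{k \ge 1} N_k$ then gives $\wdh\P(N) = 0$, and off of $N$ the disjointness holds simultaneously for every $k \ge 1$.

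The second ingredient is the pointwise inclusion
\begin{displaymath}
\{\zeta_n : n \ge 1\} \;\subseteq\; \bigcup_{k \ge 1} \{\zeta_n^{(\e_k)} : n \ge 1\},
\end{displaymath}
which is immediate from the definitions: if $t$ is a jump time of $X$ with $|\Delta X(t)| > 0$, then $|\Delta X(t)| \ge \e_k$ for all $k$ large enough, and by \eqref{eq-jump-times} such a $t$ is necessarily one of the $\zeta_n^{(\e_k)}$ for that $k$. Intersecting both sides with $\{\tau_n : n \ge 1\}$ and applying the first step on $N^c$ gives
\begin{displaymath}
\{\zeta_n : n \ge 1\} \cap \{\tau_n : n \ge 1\} \;\subseteq\; \bigcup_{k \ge 1} \bigl(\{\zeta_n^{(\e_k)} : n \ge 1\} \cap \{\tau_n : n \ge 1\}\bigr) = \emptyset
\end{displaymath}
outside of $N$, which is exactly the claim.

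I do not expect any genuine obstacle here: all the analytic work (the exponential estimates from \eqref{1st-switch}, \eqref{1st-big-jump}, together with the finiteness of $\nu(x,k, \R^d \setminus B(0,\e))$ used through \eqref{eq-nu-bdd}) has already been carried out at the level of Lemma~\ref{lem-no-common-jumps}. The present proposition is a soft countable-union upgrade, and the only minor point I would make sure to verify explicitly is the measurability of the events involved so that the union of $\wdh\P$-null sets $N_k$ is itself $\wdh\P$-null in $\F$.
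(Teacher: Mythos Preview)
Your proposal is correct and follows essentially the same route as the paper: fix the countable sequence $\e_k = 1/k$, apply Lemma~\ref{lem-no-common-jumps} for each $k$, and use the inclusion $\{\zeta_n : n \ge 1\} \subseteq \bigcup_{k \ge 1} \{\zeta_n^{(1/k)} : n \ge 1\}$ together with countable subadditivity of $\wdh\P$. The paper's proof is slightly terser (it writes the inclusion as an equality and omits the measurability remarks), but the argument is identical in substance.
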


\begin{proof}
We first notice that $\{\zeta_{n}: n \ge 1 \} = \bigcup_{m=1}^{\infty} \{ \zeta_{n}^{(1/m)}: n \ge 1 \}$ and hence
\begin{displaymath}
\{\zeta_{n}: n \ge 1 \} \cap \{\tau_{n}: n \ge 1 \} = \bigcup_{m=1}^{\infty} \{ \zeta_{n}^{(1/m)}: n \ge 1 \} \cap \{\tau_{n}: n \ge 1 \}.
\end{displaymath}
Moreover,  for each $m =1, 2, \dots$, since  \eqref{eq-nu-bdd} holds with $\e = \frac{1}{m}$,   Lemma \ref{lem-no-common-jumps} implies that
\begin{equation}\label{eq-pr=0}
\wdh\P \set{ \{ \zeta_{n}^{(1/m)}: n \ge 1 \} \cap \{\tau_{n}: n \ge 1 \}  \not= \emptyset} =0.
\end{equation}
Therefore we deduce
\begin{align*}
 \wdh\P\set{\{\zeta_{n}: n \ge 1 \} \cap \{\tau_{n}: n \ge 1 \} \not= \emptyset}
    & \le \sum_{m=1}^{\infty} \wdh\P \set{\{ \zeta_{n}^{(1/m)}: n \ge 1 \} \cap \{\tau_{n}: n \ge 1 \}  \not= \emptyset} =0.  \end{align*} This completes the proof.
\end{proof}

By virtue of $M_{t}$ and $\wdh{\bP}$, we can construct another probability measure $\bP$ on
$D([0,\infty),\rr^{d} \times \ss)$ such that $\bP$ is a solution to the martingale problem for the operator $\A$.

\begin{Theorem} \label{thm-general}
For any given $(x,k)\in \rr^{d}\times \ss$, there exists a unique martingale solution $\bP^{(x,k)}$ on $D([0,\infty),\rr^{d}
\times \ss)$ for the operator $\A$ starting from $(x,k)$.
\end{Theorem}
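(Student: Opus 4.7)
The plan is to use the likelihood ratio $M_t$ of \eqref{Mt} to transform $\wdh \P^{(x,k)}$---the unique martingale solution for $\wdh \A$ produced in Theorem \ref{special}---into the desired martingale solution for $\A$. Since $(M_t, \F_t, \wdh \P^{(x,k)})$ is a nonnegative mean-one martingale by Lemma \ref{Mtmartingale} and is integrable on every finite horizon by Lemma \ref{lem-Mt-integrable}, the prescription
\[
\frac{\d \P^{(x,k)}}{\d \wdh \P^{(x,k)}}\bigg|_{\F_t} = M_t, \qquad t \ge 0,
\]
is consistent in $t$ and thus extends uniquely by Kolmogorov's theorem to a probability measure $\P^{(x,k)}$ on $(\Omega, \F)$ with $\P^{(x,k)}((X(0),\La(0))=(x,k))=1$.

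To check that $\P^{(x,k)}$ solves the martingale problem for $\A$, fix $f \in C_c^\infty(\R^d \times \ss)$. By the Bayes rule for changes of measure, $M_t^{(f)}$ from \eqref{martingale1} is a $\P^{(x,k)}$-martingale iff $M_t \cdot M_t^{(f)}$ is a $\wdh \P^{(x,k)}$-martingale. Writing $M_t^{(f)} = \wdh M_t^{(f)} - \int_0^t [(Q - \wdh Q)f](X(s),\La(s))\,\d s$, applying the integration-by-parts formula for jump semimartingales, and using \eqref{eq2-Mt-ito} together with the fact that $\wdh M_t^{(f)}$ is a $\wdh \P^{(x,k)}$-martingale, the verification reduces to controlling the quadratic-covariation $\sum_{s \le t} \Delta M_s \cdot \Delta M_s^{(f)}$. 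By construction $M$ jumps only at the switching times $\{\tau_n\}$, while the uncompensated jumps of $\wdh M^{(f)}$ are exactly the L\'evy jumps $\{\zeta_n\}$ of $X$. Proposition \ref{prop-no-common-jump} guarantees these two sequences are $\wdh \P^{(x,k)}$-a.s.\ disjoint, so the bracket collapses to a pure switching contribution whose $\wdh \P^{(x,k)}$-compensator, computed via the intensity $\nu(\La(s);\cdot)$ of \eqref{eq-Q-hat-operator} and the definition \eqref{Mt} of $M$, cancels exactly the $\int_0^t (Q - \wdh Q) f \,\d s$ drift. Thus $M_t \cdot M_t^{(f)}$ is a $\wdh \P^{(x,k)}$-martingale as required.

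For uniqueness, let $\P'$ be any solution of the martingale problem for $\A$ starting from $(x,k)$. A symmetric argument using the reciprocal $1/M_t$ as likelihood ratio (well-defined, up to the $\e$-perturbation $q_{kl} \mapsto q_{kl} + \e$ of Step 2 in the proof of Lemma \ref{Mtmartingale} and a bounded-convergence passage to the limit) produces from $\P'$ a probability measure $\wdt \P$ that solves the martingale problem for $\wdh \A$ starting from $(x,k)$; by the uniqueness established in Theorem \ref{special} we must have $\wdt \P = \wdh \P^{(x,k)}$, which in turn forces $\P' = \P^{(x,k)}$.

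The main obstacle is the joint It\^o calculus for $M_t \cdot M_t^{(f)}$: the L\'evy jumps of $X$ and the switching jumps of $\La$ each carry their own compensators, and the product could a priori pick up an unwanted cross bracket. Proposition \ref{prop-no-common-jump}---itself reliant on the small-jump truncation $\zeta_n^{(\e)}$, the distributional characterization of the first switch and first big-jump in Lemma \ref{lem-big-jump-distribution}, and the uniform bound \eqref{eq-nu-cond-new} of Assumption \ref{assump-Lhasonlyone}---is precisely what kills this cross term. A secondary technical point is the possibility that $q_{kl}(x) = 0$ for some $k \neq l$, which would make $\log q_{kl}$ and $1/M_t$ ill-defined; the $\e$-approximation already exploited in Lemma \ref{Mtmartingale} resolves this via bounded convergence, and these steps taken together complete the proof.
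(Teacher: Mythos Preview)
Your existence argument is essentially the paper's: define $\P^{(x,k)}$ via the density $M_t$ against $\wdh\P^{(x,k)}$, then verify that $M_t M_t^{(f)}$ is a $\wdh\P^{(x,k)}$-martingale using integration by parts and Proposition \ref{prop-no-common-jump} to isolate the switching contribution in the bracket. One imprecision: you write that ``the uncompensated jumps of $\wdh M^{(f)}$ are exactly the L\'evy jumps $\{\zeta_n\}$ of $X$,'' but $\wdh M^{(f)}$ jumps at \emph{both} the $\zeta_n$ and the $\tau_n$. What actually matters is that $M$ jumps only at the $\tau_n$, so $\sum_{s\le t}\Delta M_s\,\Delta M_s^{(f)}$ is supported on $\{\tau_n\}$, and at those times Proposition \ref{prop-no-common-jump} gives $\Delta M_s^{(f)}=f(X(s),\La(s))-f(X(s),\La(s-))$. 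Your subsequent compensator cancellation is then correct and matches the paper's computation leading to \eqref{eq-Mt-Mt-f-mg}.

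Your uniqueness argument, however, is a genuinely different route from the paper's and has a gap. The paper does not invert the change of measure; instead it invokes \cite{Wang-14} to identify the conditional law of $\La(\tau_1)$ under \emph{any} solution $\P'$ for $\A$, and then reruns the piecing-together uniqueness argument of Theorem \ref{special} interval by interval on $[\tau_n,\tau_{n+1}]$. Your ``symmetric argument using $1/M_t$'' requires establishing, under the a priori unknown measure $\P'$, that (i) the switching point process has compensator $\int_0^\cdot q_{\La(s)l}(X(s))\,\d s$, and (ii) the switching times $\{\tau_n\}$ and the L\'evy jump times $\{\zeta_n\}$ are $\P'$-a.s.\ disjoint. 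Both facts are proved in the paper only under $\wdh\P^{(x,k)}$: equation \eqref{1st-switch} and Proposition \ref{prop-no-common-jump} rely on the explicit exponential-rate-$(n_0-1)$ structure of $\wdh Q$, and do not transfer automatically to an arbitrary $\P'$. Without (i) you cannot show $1/M_t$ is a $\P'$-martingale, and without (ii) the It\^o product computation under $\P'$ acquires an uncontrolled cross term. The $\e$-perturbation you mention addresses positivity of $M_t$ but not these structural issues; moreover, perturbing $q_{kl}$ changes $\A$ itself, so it is unclear how $\P'$ relates to the perturbed problem. To close the gap you would need an analogue of \cite{Wang-14} (or a direct martingale-problem argument) identifying the switching compensator under $\P'$, together with a $\P'$-version of Proposition \ref{prop-no-common-jump}.
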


\begin{proof} For each $t\ge 0$ and each $A\in \F_{t}$, define
\begin{equation}\label{eq-P-general}
\P_{t}^{(x,k)}(A)=\int_{A}M_{t}(X(\cdot), \La(\cdot))\,\d\wdh{\P}^{(x,k)}.
\end{equation} Thanks to Lemma  \ref{Mtmartingale},   the family of probability measures $\{\bP_{t}^{(x,k)}\}_{t\ge 0}$ is consistent in the sense that if $0 \le t_{1} \le t_{2}$ and $A \in \F_{t_{1}}$,  then $\P_{t_{2}}^{(x,k)}(A)=  \P_{t_{1}}^{(x,k)}(A)$.
Thus by Tulcea's extension theorem (see, e.g., \cite[Theorem
1.3.5]{Stroock-V}), there exists a unique probability measure
$\bP^{(x,k)}$ on $(\Omega,\F)$ which coincides with
$\bP_{t}^{(x,k)}$ on $\F_{t}$ for all $t\ge 0$. Moreover, we will
prove that the $\bP$ is the desired martingale solution for the
operator $\A$ staring from $(x,k)$.  To do so, analogously to the proof of Lemma 4.2 in \cite{Xi-09}, we first prove that
for each function $f \in C^{\infty}_{c}(\rr^{d}\times \ss)$,
$\bigl(M_{t}M_{t}^{(f)}, \F_{t}, \wdh{\bP}\bigr)$ is a martingale,
where $M_{t}^{(f)}$ is defined in \eqref{martingale1}. In fact,  using integration by parts, we derive that
\begin{equation}\label{eq-Mt-Mt-f}
\begin{aligned}
M_{t}M_{t}^{(f)} &=\displaystyle\int_{0}^{t} M_{s-}^{(f)}\d  M_{s}+\int_{0}^{t} M_{s-} \d{\wdh M}_{s}^{(f)}  \\
 & \quad  + \displaystyle\int_{0}^{t} M_{s-} \bigl(\d M_{s}^{(f)}-\d{\wdh M}_{s}^{(f)}\bigr)
 + \displaystyle \sum_{s \le t}
\bigl(M_{s}-M_{s-}\bigr)\bigl(M_{s}^{(f)}-M_{s-}^{(f)}\bigr),
\end{aligned}
\end{equation} where $\wdh M_{t}^{(f)}$ is defined in \eqref{eq-Mt-hat}.
Using \eqref{martingale1}, \eqref{Mt},  and  Proposition \ref{prop-no-common-jump}, we can compute \begin{align*}
 \sum_{s \le t} & \bigl(M_{s}-M_{s-}\bigr)\bigl(M_{s}^{(f)}-M_{s-}^{(f)}\bigr) \\
    & = \sum_{s \le t}  (M_{s} - M_{s-}) [f(X(s),\La(s)) - f(X(s),\La(s-))]   \\
    &  = \int_{[0,t] \times \ss}M_{s-} \left( \frac{M_{s}}{M_{s-}} -1\right)  [f(X(s), l) - f(X(s), \La(s-))] n(\d s, \d l)\\
    & = \int_{[0,t] \times \ss}M_{s-} \left( q_{\La(s-)l} (X(s)) -1\right)  [f(X(s), l) - f(X(s), \La(s-))] n(\d s, \d l).
\end{align*}
On the other hand,
\begin{align*}
\lefteqn{  \int_{0}^{t} M_{s-} \bigl(\d M_{s}^{(f)}-\d{\wdh M}_{s}^{(f)}\bigr)} \\   & \
 \   = -  \int_{0}^{t} M_{s-}  \sum_{l\in \ss}  \bigl( q_{\La(s-)l} (X(s)) -1\bigr)\! \bigl[  f(X(s),l) - f(X(s),\La(s-))\bigr] \d s.
\end{align*}
Combining the last two displayed equations, and using the observations concerning the martingale measure $\wdt n(\cdot, \cdot)$ in the proof of Lemma \ref{Mtmartingale}, we obtain
\begin{align*}
  \lefteqn{\int_{0}^{t} M_{s-} \bigl(\d M_{s}^{(f)}-\d{\wdh M}_{s}^{(f)}\bigr)
+  \sum_{s \le t} \bigl(M_{s}-M_{s-}\bigr)\bigl(M_{s}^{(f)}-M_{s-}^{(f)}\bigr)} \\
 & \ \ = \int_{[0,t] \times \ss}M_{s-} \left( q_{\La(s-)l} (X(s)) -1\right)  [f(X(s), l) - f(X(s), \La(s-))] \wdt n(\d s, \d l). \end{align*}
 Then upon plugging the above equation into \eqref{eq-Mt-Mt-f}, it follows that
 \begin{equation}
\label{eq-Mt-Mt-f-mg}\begin{aligned}
M_{t}M_{t}^{(f)} &=\displaystyle\int_{0}^{t} M_{s-}^{(f)}\d  M_{s}+\int_{0}^{t} M_{s-} \d{\wdh M}_{s}^{(f)} \\
                          & \quad + \int_{[0,t] \times \ss}M_{s-} \left( q_{\La(s-)l} (X(s)) -1\right)  [f(X(s), l) - f(X(s), \La(s-))] \wdt n(\d s, \d l).
\end{aligned}\end{equation}
We have shown respectively in Theorem \ref{special}  and Lemma \ref{Mtmartingale} that $\wdh M_{\cdot}^{(f)}$ and $M_{\cdot}$ are martingales under  the measure $\wdh \P^{(x,k)}$. Also recall from the proof of Lemma \ref{Mtmartingale} that $\wdt n(\cdot, \cdot)$ is a martingale measure on $[0,\infty)\times \ss$ under $\wdh \P^{(x,k)}$. Thus in view of \eqref{eq-Mt-Mt-f-mg}, we conclude immediately that $M_{t}M_{t}^{(f)}$ is a martingale under $\wdh \P^{(x,k)}$.

We now
prove that for each function $f \in C^{\infty}_{c}(\rr^{d}\times
\ss)$, $\bigl(M_{t}^{(f)}, \F_{t}, \bP^{(x,k)} \bigr)$ is a martingale. Indeed, for any given $0\le s<t$ and any
given $A\in \F_{s}$, we have
$$\int_{A}M_{t}^{(f)}\d\P^{(x,k)}=\int_{A}M_{t}M_{t}^{(f)}\d\wdh{\P}^{(x,k)}
=\int_{A}M_{s}M_{s}^{(f)}\d\wdh{\P}^{(x,k)}=\int_{A}M_{s}^{(f)}\d\P^{(x,k)},$$ where the second equality follows from the martingale property of $\bigl(M_{t}M_{t}^{(f)}, \F_{t},
\wdh{\bP}^{(x,k)}\bigr)$, while the first   and the third equalities hold true since $\P^{(x,k)}$  coincides with    the probability measure   $\P_{t}^{(x,k)}$ given in \eqref{eq-P-general}.
This shows that $\P^{(x,k)}$ is a martingale solution for the operator $\A$ starting from $(x,k)$.

It remains to show that any martingale solution $\wdt \P$ for the operator $\A$ starting from $(x,k)$ must agree with $\P^{(x,k)}$ and therefore establishing the desired uniqueness.
From \cite{Wang-14}, for any martingale solution
$\bP^{(x,k)}$ to the operator $\A$, we have
$$\bP^{(x,k)}\bigl(\La(\tau_{1})\in
\ss\setminus\{k\}|\F_{\tau_{1}-}\bigr)=-\sum_{l\in
\ss\setminus\{k\}\ss\setminus\{k\}}\frac{q_{kl}}{q_{kk}}\bigl(X(\tau_{1}-)\bigr)=1.$$ Then the uniqueness can be established by using a similar  argument as that in the proof of Theorem \ref{special}.
\end{proof}

\begin{Remark}\label{rem-strong-Markov}
Thanks to Theorem \ref{thm-general}, the martingale problem for the operator $\A$ defined in \eqref{A} with any initial condition $(x,k) \in \R^{d}\times \ss$ is well-posed. Thus the process $(X,\La)$ is strong Markov.
\end{Remark}

\section{Feller Property}\label{sect-Feller}

We proved in Theorem \ref{thm-general} that  the martingale problem for the operator $\A$ defined in \eqref{A} is well-posed. Consequently for any $(x,k)$, there exists a unique probability measure $\P$ on $\Omega=D([0,\infty),\R^{d}\times \ss)$ under which  the coordinate process $(X(t),\La(t))$ satisfies $\P\{(X(0),\La(0)) =(x,k)\} =1$ and that for any $f\in C_{c}^{\infty}(\R^{d}\times \ss)$, the process  $M_{t}^{f}$ defined in \eqref{martingale1} is an $\{\F_{t}\}$-martingale.
In this section, we will prove that in the  probability space $(\Omega, \F, \P)$,  the process $(X(t),\La(t))$ possesses   the Feller property
  under the following  conditions.

\begin{Assumption} \label{FP1}
Assume that there exist a positive  constant $H$ and  a nondecreasing and concave function $\rho: [0,\infty) \mapsto [0,\infty)$ satisfying $\rho(r) > 0$ for $r > 0$ and  \begin{equation}
\label{eq-rho-sec-2}\int_{0+}\frac{ \d r}{ \rho(r)} = \infty,
\end{equation}
such that for all  $k \in \ss$ and $  x,z\in \R^{d}$,
\begin{equation}\label{(FP1)}
\|\sigma(x,k)-\sigma(z,k)\|^2 +2 \langle x-z, b(x,k)-b(z,k)\rangle \le
H  |x-z| \rho(|x-z|),\end{equation} and
\begin{equation}\label{(FP2)}
\int_{\R^{d}_{0}} |u|\|\nu(x,k,\cdot)-
\nu(z,k,\cdot)\|(\d u) \le H    \rho(|x-z|),  \end{equation}
  where $\sigma(x,k)\in \R^{d\times d}$ satisfies $\sigma(x,k)\sigma(x,k)^{T}= a(x,k)$, and
$\|\cdot\|$ denotes the Hilbert-Schmidt norm for matrices or the
total variation norm for signed measures. Here and   below, $^{T}$ denotes the transpose of a vector or matrix.
\end{Assumption}

\begin{Assumption} \label{FP4}
 Assume that
\begin{equation}\label{(FP12)}
|q_{kl}(x) - q_{kl}(z)| \le H |x-z|\end{equation} for all $x,z \in
\rr^{d}$ and $k \ne l\in \ss$, where constant $H>0$ is the same as that in
Assumption~\ref{FP1} without loss of generality.\end{Assumption}

\begin{Remark} \label{FP2}
For existence of a square root $\sigma(x,k)$ of $a(x,k)$ such as in
Assumption~\ref{FP1} and the equivalence of different choices of the
square root, we refer the reader to \cite{Stroock-V} for the details.
  Some common functions  satisfying the conditions in Assumption  \ref{FP1}   include $\rho(r) = r$ and concave and increasing  functions such as $\rho(r) = r \log(1/r)$,  $\rho(r) = r \log( \log(1/r))$, and $\rho(r) = r \log(1/r)  \log( \log(1/r))$ for $r \in (0,\delta)$ with $\delta > 0$ small enough. \end{Remark}
The main result of this section is:
\begin{Theorem} \label{thm-Feller} Suppose that Assumptions \ref{assump-Lhasonlyone},
\ref{FP1} and \ref{FP4} hold. Then the process $(X,\Lambda)$ has
 Feller property.
\end{Theorem}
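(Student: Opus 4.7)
The plan is to follow the three-step strategy outlined in the Introduction, which reduces the Feller property of the coupled process $(X,\La)$ first to that of the underlying L\'evy-type processes $\wdt X^{(k)}$, then to the killed processes $X^{(k)}$, and finally assembles them via a resolvent expansion.

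\textbf{Step 1: Feller property for each $\wdt X^{(k)}$ via coupling.} Fix $k\in\ss$ and let $\wdt X^{(k)}(t;x),\wdt X^{(k)}(t;y)$ be two copies starting at $x,y\in\R^d$. I would construct them on a common probability space by a marching coupling: the Brownian parts are driven by the same Wiener process and the jump parts by a maximal (Vaserstein-type) coupling of the L\'evy kernels $\nu(x,k,\cdot)$ and $\nu(y,k,\cdot)$, for which the infinitesimal mismatch is controlled by the total-variation quantity in \eqref{(FP2)}. Applying It\^o's formula for jump processes to a $C^2$-smoothing of $|\cdot|$ and using \eqref{(FP1)}--\eqref{(FP2)}, one derives
\[
\E\bigl[|\wdt X^{(k)}(t;x)-\wdt X^{(k)}(t;y)|\bigr]\le |x-y|+H\int_0^t \E\bigl[\rho(|\wdt X^{(k)}(s;x)-\wdt X^{(k)}(s;y)|)\bigr]\d s.
\]
Since $\rho$ is concave, Jensen's inequality turns the right-hand side into a closed functional inequality in $u(t):=\E|\wdt X^{(k)}(t;x)-\wdt X^{(k)}(t;y)|$; Bihari--LaSalle together with $\int_{0+}\d r/\rho(r)=\infty$ then forces $u(t)\to 0$ as $|x-y|\to 0$, uniformly for $t$ in compact intervals. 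Convergence in $L^1$ implies convergence in probability, hence $\wdt P_t^{(k)}f(x)\to \wdt P_t^{(k)}f(y)$ for every $f\in C_b(\R^d)$, which is the Feller property of $\wdt X^{(k)}$.

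\textbf{Step 2: Feller property of the killed process $X^{(k)}$.} With the killed semigroup
\[
P^{(k)}_t f(x):=\E^x\!\Bigl[f\bigl(\wdt X^{(k)}(t)\bigr)\exp\!\Bigl(\int_0^t q_{kk}(\wdt X^{(k)}(s))\d s\Bigr)\Bigr],
\]
Assumption \ref{assump-Lhasonlyone}(ii) makes the exponential factor uniformly bounded in $[0,1]$, while Assumption \ref{FP4} gives Lipschitz continuity of $q_{kk}$. Splitting the difference $P^{(k)}_t f(x)-P^{(k)}_t f(y)$ into a piece that uses the Feller property of $\wdt X^{(k)}$ (Step~1) with a fixed exponential weight and a piece controlled by $\E\int_0^t|q_{kk}(\wdt X^{(k)}(s;x))-q_{kk}(\wdt X^{(k)}(s;y))|\d s\le H\int_0^t u(s)\d s$, one obtains Feller continuity of $P^{(k)}_t$, which is Lemma~\ref{FP5}.

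\textbf{Step 3: Resolvent series and assembly.} By Proposition~\ref{prop-no-common-jump} the jump and switching times are a.s.\ disjoint, so conditioning on the switching epochs $\{\tau_n\}$ and using the strong Markov property (Remark~\ref{rem-strong-Markov}) lets one expand the $\alpha$-resolvent of $(X,\La)$ as a series
\[
G_\alpha f(x,k)=\sum_{n=0}^\infty \bigl(\widehat G_\alpha \mathcal J\bigr)^n \widehat G_\alpha f(x,k),
\]
where $\widehat G_\alpha f(x,k)=\int_0^\infty e^{-\alpha t}P^{(k)}_t f(\cdot,k)(x)\d t$ is the $\alpha$-resolvent of the killed process $X^{(k)}$, and $\mathcal J g(x,k):=\sum_{l\neq k}q_{kl}(x)g(x,l)$ is the jump-switch operator. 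Step~2 and Assumption~\ref{FP4} imply that both $\widehat G_\alpha$ and $\mathcal J$ send $C_0(\R^d\times\ss)$ into itself; since Assumption~\ref{assump-Lhasonlyone}(ii) gives a uniform bound $\sup_{x,k}|q_{kk}(x)|\le\bar q$, for $\alpha$ large enough the operator norm of $\widehat G_\alpha\mathcal J$ is at most $\bar q/(\alpha+\underline q)<1$, so the series converges in $C_0$-norm and $G_\alpha$ maps $C_0$ into $C_0$. The resolvent-to-semigroup criterion of \cite{MeynT-93} then promotes this to the Feller property of the transition semigroup of $(X,\La)$.

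\textbf{Anticipated obstacles.} The most delicate step is constructing the jump coupling in Step~1 so that the infinitesimal difference inherits the total-variation control \eqref{(FP2)} without destroying the cancellation inside the compensator (coming from the $\langle\nabla f,u\rangle\one_{B(0,\e_0)}$ term), while also handling the non-Lipschitz modulus $\rho$. A secondary technical point is verifying that the resolvent series in Step~3 converges in the $C_0$-norm rather than merely pointwise; this is where the uniform bound on $q_{kk}$ and the choice of sufficiently large $\alpha$ play the crucial role.
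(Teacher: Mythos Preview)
Steps 1 and 2 track the paper's Lemmas \ref{Lem-FP3} and \ref{FP5} closely and are correct in outline. The gap is in Step 3.

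First, a function-space mismatch: what Step 2 (and the paper's Lemma \ref{FP5}) actually delivers is that $P^{(k)}_t$ maps $C_b$ into $C_b$; nothing in Assumptions \ref{assump-Lhasonlyone}, \ref{FP1}, \ref{FP4} forces $P^{(k)}_t f$ to vanish at infinity when $f$ does. Your assertion that $\widehat G_\alpha$ preserves $C_0$ is therefore unsupported, and the Hille--Yosida route (which is what a ``resolvent-to-semigroup criterion'' normally means) is unavailable, since that theorem needs a strongly continuous semigroup on $C_0$.

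Second, if you repair this by working in $C_b$ throughout, your series argument does yield $G_\alpha:C_b\to C_b$ for $\alpha$ large---this is precisely Lemma \ref{lem-resolvant-series}---but the implication ``resolvent $C_b$-Feller $\Rightarrow$ semigroup $C_b$-Feller'' is not automatic: on $C_b$ there is no strong continuity and no off-the-shelf inversion. The paper closes this gap with an additional step you omit: from the resolvent identity it derives a \emph{time-domain} identity for the transition kernels themselves, the series \eqref{(FP22)} for $P(t,(x,k),A\times\{l\})$. This comes by showing both $t\mapsto P_t f(x,k)$ and its candidate series are right-continuous in $t$, invoking uniqueness of the Laplace transform, and then running a monotone-class argument. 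Once \eqref{(FP22)} is in hand, each summand is lower semicontinuous in $x$ for open $A$ (by Step 2 together with Proposition 6.1.1 of \cite{MeynT-93}), hence so is the sum, which is exactly the weak Feller property of $(X,\La)$. Your proposal needs either this Laplace-inversion/monotone-class bridge or an independent argument taking resolvent continuity to semigroup continuity in the $C_b$ setting.

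A minor point: the appeal to Proposition \ref{prop-no-common-jump} in Step 3 is unnecessary (and that proposition is stated under $\wdh\P$, not $\P$); the resolvent expansion follows directly from the strong Markov property at the first switching time $\tau$ together with the killing identity of Lemma \ref{lem-CP1}, as the paper does in Lemma \ref{lem-resolvant-series}.
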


 Let us first briefly describe our strategy toward the proof of Theorem \ref{thm-Feller}. We first use 
the coupling method  to show in Lemma \ref{Lem-FP3} that   the L\'evy type process $\wdt X^{(k)}$ corresponding to the operator $\LL_{k}$ of \eqref{L} has Feller property under Assumptions \ref{assump-Lhasonlyone} (i) and
\ref{FP1}. Lemma \ref{FP5} further establishes the Feller property  for the killed  L\'evy type process $X^{(k)}$ under Assumption \ref{FP4}. Next we show in Lemma \ref{lem-resolvant-series} that the resolvent of $(X,\La)$ can be represented by a  series of the resolvents of the killed processes $X^{(k)}, k \in \ss$. This representation  further  helps us to establish \eqref{(FP22)}. Finally we use \eqref{(FP22)}  together with Proposition 6.1.1 in \cite{MeynT-93} to derive the Feller property for the process $(X,\La)$.

 \begin{Remark}\label{rem-comparison-Wang-10}The recent paper \cite{Wang-10} 
 also establishes the Feller property for the L\'evy type process $\wdt X^{(k)}$ under a different set of conditions. In particular,   the L\'evy type kernel is assumed to have a certain  representation in \cite{Wang-10}. By contrast,  our goal is to 
 establish the Feller property for the two-component process $(X,\Lambda)$ under Assumptions \ref{assump-Lhasonlyone}, \ref{FP1} and \ref{FP4}. This is achieved by establishing the Feller property for   the L\'evy type process $\wdt X^{(k)}$ as well as the killed L\'evy type process $X^{(k)}$ under these assumptions.  
 It is worth pointing out that Lemma \ref{Lem-FP3} below indicates that Assumptions \ref{assump-Lhasonlyone} (i) and \ref{FP1} are sufficient conditions for   the Feller property  for the L\'evy type process $\wdt X^{(k)}$. These assumptions,
 in particular, Assumption \ref{FP1},  seem more direct and easier to verify in some sense compared with those in \cite{Wang-10}.
\end{Remark}

  Recall that for each   $k \in \ss$ and $x\in \R^{d}$, Assumption \ref{assump-Lhasonlyone} guarantees that the operator $\LL_{k}$ of \eqref{L}  uniquely determines  a L\'{e}vy type process $\wdt {X}^{(k)(x)}$ with initial condition $\wdt {X}^{(k)(x)}(0)=x$.
   Next 
 we kill the
  process $\wdt{X}^{(k)(x)}$ at  rate $(-q_{kk})$:
\begin{equation}\label{kp1}
\begin{array}{ll}
\E_{k}[f(X^{(k)(x)}(t))]\ad =\displaystyle
\E_{k}[f(\wdt{X}^{(k)(x)}(t)); t<\tau]\\ \ad \displaystyle
=\E_{k}\biggl[\exp\biggl\{\int_{0}^{t}q_{kk}(\wdt{X}^{(k)(x)}(s))\d s\biggr\}f(\wdt{X}^{(k)(x)}(t))\biggr],
\end{array}
\end{equation} 
where $\tau:=\inf\{ t\ge 0: \La(t) \not=\La(0)\}$. Equivalently, the killed L\'{e}vy type
$X^{(k)(x)}$ can be defined as $X^{(k)(x)}(t)=\wdt{X}^{(k)(x)}(t)$ if $t<\tau$
and $X^{(k)(x)}(t)=\partial$ if $t\ge \tau$, where $\partial$ is a
cemetery point added to $\R^{d}$.
 Moreover, we denote the transition probability families of the L\'{e}vy type process $\wdt {X}^{(k)}$ and the killed L\'{e}vy type process $X^{(k)}$ by $\{\wdt {P}^{(k)}(t,x,A): t \ge 0, x \in \rr^{d}, A \in \B(\rr^{d})\}$ and $\{P^{(k)}(t,x,A): t \ge 0, x \in \rr^{d}, A \in\B(\rr^{d})\}$, respectively.

For an arbitrarily fixed $k \in \ss$, we now construct a coupling of
the L\'{e}vy type process $\wdt {X}^{(k)}$. To this end, we need only to
construct a coupling for  its generator ${\LL}_{k}$. For $x,z \in
\R^{d}$, set
$$a(x,z,k)=\left(\begin{array}{cc}
a(x,k) & \sigma (x,k) \sigma (z,k)^T \\
\sigma (z,k) \sigma (x,k)^T & a(z,k)
\end{array}\right),\quad
b(x,z,k)=\left(\begin{array}{c}
b(x,k)\\
b(z,k) \end{array}\right).$$  Obviously, $a(x,z,k)$ is nonnegative
definite for all $x,z \in \rr^{d}$. For $h(x,z) \in
C^{2}_{0}(\rr^{d} \times \rr^{d})$, set
\begin{equation}\label{eq-Omega-d-defn}
\wdt {\Omega}_{d}(k)h(x,z)=\frac
{1}{2}\hbox{tr}\bigl(a(x,z,k)\nabla^{2}h(x,z)\bigr)+\langle
b(x,z,k), \nabla h(x,z)\rangle,
\end{equation} which is a coupling of the
diffusion part in the generator ${\LL}_{k}$ defined in \eqref{L} (refer to \cite{ChenLi-89}).
Next, for $h(x,z) \in C^{2}_{0}(\rr^{d} \times \rr^{d})$, set
\begin{equation}\label{eq-Omega-j-defn}
\begin{aligned}\displaystyle &\wdt {\Omega}_{j} (k)h(x,z) \\ & =
\int
[h(x+u,z)-h(x,z)- \langle \nabla_{x } h(x,z),   u\rangle {\mathbf{1}}_{B(0, \e_{0})}(u)]\bigl(\nu(x,k,\d u)-\nu(z,k,\d u)\bigr)^{+}\\
& \quad + \int  [h(x,z+u)-h(x,z)- \langle \nabla_{z} h(x,z), u \rangle {\mathbf{1}}_{B(0, \e_{0})}(u)]\bigl(\nu(z,k,\d u)-\nu(x,k,\d u)\bigr)^{+}\\
& \quad + \int
[h(x+u,z+u)-h(x,z)-  \langle \nabla_{x } h(x,z),   u\rangle {\mathbf{1}}_{B(0, \e_{0})}(u)\\
& \qquad\qquad-\langle \nabla_{z} h(x,z), u \rangle {\mathbf{1}}_{B(0, \e_{0})}(u)] \bigl(\nu(x,k,(\cdot))\wedge \nu(z,k,(\cdot))\bigr)(\d u), \end{aligned}
\end{equation}   where $(\nu(x,k,\cdot)-\nu(z,k,\cdot))^{+} =\sup\{\nu(x,k,A)-\nu(z,k,A): A\in \B(\R_{0}^{d})\}$ and $(\nu(z,k,\cdot)-\nu(x,k,\cdot))^{+}$ is defined in a similar fashion.   Note that the operator $\wdt {\Omega}_{j} (k)$ defined in \eqref{eq-Omega-j-defn}  is a coupling of the jump
part in the generator ${\LL}_{k}$ defined in \eqref{L}.
 Finally,
combining the two couplings together, we get a coupling
$\wdt {{\LL}}_{k}$ of the generator ${\LL}_{k}$ as follows:
\begin{equation}\label{(FP5)}
\wdt {{\LL}}_{k}
h(x,z)=\wdt {\Omega}_{d}(k)h(x,z)+\wdt {\Omega}_{j}(k)h(x,z),
\end{equation} for $h(x,z) \in
C^{2}_{0}(\rr^{d} \times \rr^{d})$.

To proceed, we now introduce the Wasserstein metric
between two probability measures as follows. For two probability
measures $P_1$ and $P_2$ on $(\rr^{d},\B(\rr^{d}))$, define
$$W\bigl(P_{1}, P_{2} \bigr)
= \inf_{\wdt {P}} \int |x-z| \wdt {P} (\d x,\d z),$$ where $\wdt {P}$ varies
over all coupling probability measures with marginals $P_{1}$ and
$P_{2}$; that is, 
\begin{displaymath}
\wdt P(A\times \R^{d}) = P_{1}(A), \text{ and } \wdt P(\R^{d}\times A) = P_{2}(A), \text{ for any }A \in \B(\R^{d}).
\end{displaymath}

\begin{Lemma} \label{Lem-FP3}
Suppose that Assumptions~\ref{assump-Lhasonlyone} (i) and \ref{FP1} hold. For each $k
\in \ss$, the L\'{e}vy type process $\wdt {X}^{(k)}$ generated by the
L\'{e}vy type operator ${\LL}_{k}$ defined in \eqref{L} has Feller
property.
\end{Lemma}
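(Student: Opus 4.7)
The plan is to use the coupling operator $\wdt{\LL}_k$ in \eqref{(FP5)} to build a coupled process $(\wdt X_1(t), \wdt X_2(t))$ starting from $(x,z)$ whose marginal laws are those of $\wdt X^{(k)(x)}$ and $\wdt X^{(k)(z)}$, and then to show that $\E|\wdt X_1(t)-\wdt X_2(t)|\to 0$ as $z\to x$ for every $t\ge 0$. This Wasserstein-type continuity implies weak continuity of the transition probabilities $\wdt P^{(k)}(t,x,\cdot)$ in $x$, which in turn yields the Feller property. Existence of the coupled process follows from Assumption~\ref{assump-Lhasonlyone}(i) together with the classical L\'evy-type coupling construction in \cite{ChenLi-89}.

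The core estimate is obtained by applying $\wdt\LL_k$ to a smooth approximation $h_\delta(x,z):=\sqrt{|x-z|^2+\delta}-\sqrt{\delta}$ of $h(x,z):=|x-z|$. Writing $h_\delta=f_\delta(|x-z|)$ with $f_\delta$ concave and $0\le f_\delta'\le 1$, a direct Hessian computation that exploits both the concavity of $f_\delta$ and the block structure of $a(x,z,k)$ yields
\[
\wdt\Omega_d(k) h_\delta(x,z) \le \frac{f_\delta'(|x-z|)}{2|x-z|}\bigl(\|\sigma(x,k)-\sigma(z,k)\|^2 + 2\langle x-z,\, b(x,k)-b(z,k)\rangle\bigr) \le \tfrac{H}{2}\rho(|x-z|)
\]
by \eqref{(FP1)}. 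In $\wdt\Omega_j(k) h_\delta$ the common-jumps integral vanishes because $h_\delta$ depends only on $x-z$ (so $h_\delta(x+u,z+u)-h_\delta(x,z)=0$ and the compensator terms cancel), while the two disjoint-jump integrals are bounded by $2\int_{\R^d_0}|u|\,\|\nu(x,k,\cdot)-\nu(z,k,\cdot)\|(du)\le 2H\rho(|x-z|)$ via \eqref{(FP2)}. Hence $\wdt\LL_k h_\delta(x,z)\le C\rho(|x-z|)$ uniformly in $\delta>0$ for some constant $C$ depending only on $H$.

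Dynkin's formula for the coupled process together with Jensen's inequality (using concavity of $\rho$) then yields
\[
\E h_\delta(\wdt X_1(t),\wdt X_2(t)) \le h_\delta(x,z) + C\int_0^t \rho\bigl(\E|\wdt X_1(s)-\wdt X_2(s)|\bigr) ds.
\]
Letting $\delta\downarrow 0$ by monotone convergence and applying the Bihari--Osgood inequality, which is justified by \eqref{eq-rho-sec-2}, gives
\[
\lim_{z\to x}\sup_{s\le t}\E|\wdt X_1(s)-\wdt X_2(s)|=0.
\]
For Lipschitz $f\in C_b(\R^d)$ this immediately controls $|\wdt P_t^{(k)}f(x)-\wdt P_t^{(k)}f(z)|$, and the extension to general $f\in C_b(\R^d)$ follows because $W_1$-convergence implies convergence in distribution. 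This establishes the Feller property for $\wdt X^{(k)}$.

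The main obstacle will be rigorously handling the singularity of $h$ on the diagonal $\{x=z\}$. Concretely, one must justify passing $\delta\downarrow 0$ both in $\wdt\Omega_d(k) h_\delta$ (where the prefactor $f_\delta'(r)/r$ diverges like $1/r$ and is tamed only by the bracketed factor in \eqref{(FP1)}) and in the jump integrals $\wdt\Omega_j(k) h_\delta$ (where uniform integrability in $\delta$ with respect to the signed kernels $(\nu(x,k,\cdot)-\nu(z,k,\cdot))^{\pm}$ must be verified, including the truncated-gradient terms for small jumps). A minor secondary point is establishing the a priori integrability of $\E|\wdt X_1(t)-\wdt X_2(t)|$ needed to pass the limit inside the integral; this can be recovered iteratively from the Gronwall-type estimate above.
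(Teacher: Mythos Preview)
Your route is essentially the paper's: the same basic coupling $\wdt\LL_k$, the same splitting into $\wdt\Omega_d(k)$ and $\wdt\Omega_j(k)$, the same use of \eqref{(FP1)}--\eqref{(FP2)} to bound the action on a regularized distance, and the same Bihari--Osgood step via \eqref{eq-rho-sec-2} to obtain Wasserstein continuity. The only substantive difference is the mollifier: the paper uses the Yamada--Watanabe approximants $\psi_n(r)=\int_0^{|r|}\!\int_0^y\rho_n(u)\,du\,dy$ (with $\psi_n''\le \tfrac{2}{n\rho}$ producing a residual term $Ha_{n-1}/n$ that vanishes as $n\to\infty$), whereas you use $h_\delta=\sqrt{|x-z|^2+\delta}-\sqrt{\delta}$. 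Both choices work and lead to the same final inequality \eqref{(FP11)}; the paper also localizes with $T_R$ before passing to the limit, which you should add when applying Dynkin.

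One correction: your $f_\delta(r)=\sqrt{r^2+\delta}-\sqrt{\delta}$ is \emph{convex}, not concave ($f_\delta''(r)=\delta(r^2+\delta)^{-3/2}>0$), so ``concavity of $f_\delta$'' is not the reason your diffusion bound holds. What you actually need---and what does hold for this $f_\delta$---is $f_\delta''(r)\le f_\delta'(r)/r$ (equivalently, $s\mapsto\sqrt{s+\delta}$ is concave in $s=r^2$), which lets you absorb the $f_\delta''\,\overline A$ term into $\tfrac{f_\delta'}{r}\,\overline A$ and arrive at the displayed inequality. With that fix, your sketch is correct.
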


\begin{proof} For an arbitrarily fixed $k \in \ss$, we need only to
prove that for any $t>0$, $x,z \in \rr^{d}$,
$\wdt {P}^{(k)}(t,x,\cdot)$ converges weakly to
$\wdt {P}^{(k)}(t,z,\cdot)$ as $x \to z$. By virtue  of
Theorem 5.6 in \cite{Chen04}, it suffices to prove that
\begin{equation}\label{(FP6)}
W \bigl(\wdt {P}^{(k)}(t,x,\cdot), \wdt {P}^{(k)}(t,z,\cdot)\bigr) \to 0
\qquad \hbox{as} \quad  x \to z. \end{equation} We use the
coupling $\wdt {{\LL}}_{k}$ constructed in \eqref{(FP5)} to establish \eqref{(FP6)}. Let
$(\wdt {X}^{(k)}, \wdt {Z}^{(k)})$ denote the coupling process
corresponding to the coupling generator $\wdt {{\LL}}_{k}$.   Also let   $\bP_{k}$ denote the distribution of
$(\wdt {X}^{(k)},\wdt {Z}^{(k)})$ and   $\E_{k}$   the
corresponding expectation with a slight  abuse of notation.
 By Assumption~\ref{assump-Lhasonlyone} we readily know that
the coupling process $(\wdt {X}^{(k)}, \wdt {Z}^{(k)})$ is
non-explosive. Similarly to the proof of Theorem 2.3 in
\cite{ChenLi-89}, set
\begin{align*}
T_R &: = \inf \{t \ge 0: |\wdt {X}^{(k)}(t)|^2 +|\wdt {Z}^{(k)}(t)|^2 > R \}.
\end{align*}

  Thanks to the assumptions imposed on the function $\rho$, we can find a strictly decreasing sequence $\{a_{n}\}\subset (0, 1]$ with $a_{0} =1$, $\lim_{n\to\infty} a_{n} =0$ and $\int_{a_{n}} ^{a_{n-1}} \rho^{-1} (r) \d r = n$ for every $n \ge 1$. For each $n \ge 1$, there exists a continuous function $\rho_{n}$ on $\R$ with support in $(a_{n}, a_{n-1}) $ so that  $0 \le \rho_{n}(r) \le 2 n^{-1}\rho^{-1}(r) $ holds for every $r > 0$, and $\int_{a_{n}}^{a_{n-1}} \rho_{n}(r) \d r =1$.

Now consider the sequence of functions
\begin{equation}
\label{eq-psi-n-sec-2}
\psi_{n}(r) : = \int_{0}^{|r|}\int_{0}^{y} \rho_{n}(u) \d u \d y, \quad r\in \R, n \ge 1.
\end{equation}
We can immediately verify that $\psi_{n}$ is even and continuously differentiable, with $|\psi_{n}'(r)| \le 1$ and $\lim_{n\to\infty} \psi_{n}(r) = |r|$ for $r\in \R$. Furthermore, for each $r > 0$, the sequence $\{\psi_{n}(r) \}_{n\ge 1}$ is nondecreasing. Note also that for each $n\in \N$,  $\psi_{n}, \psi_{n}'$ and $\psi_{n}''$ all vanish  on the interval $(-a_{n}, a_{n})$.


 For any $x,z \in \rr^{d}$, set
\begin{align*}
A(x,z,k)& =a(x,k)+a(z,k)-2 \sigma (x,k) \sigma (z,k)^T,\\
\wdh {B}(x,z,k)& =\langle x-z, b(x,k)-b(z,k) \rangle, \end{align*}
 and
\begin{displaymath}
\lbar{A} (x,z,k)   = \langle x-z, A(x,z, k) (x-z) \rangle / |x-z|^{2}.
\end{displaymath}
Then as in the proof of Theorem 3.1 in \cite{ChenLi-89}, we can verify directly that
\begin{equation}
\label{eq-Omega-dk-psi n expression}
\begin{aligned}
2 \wdt \Omega_{d}(k) \psi_{n}(|x-z|) & = \psi_{n}''(|x-z|)\lbar A (x,z,k)  \\ & \qquad + \frac{ \psi_{n}'(|x-z|) }{|x-z|} \big[ \tr(A(x,z,k))-\lbar A(x,z,k) + 2 \wdh B(x,z,k)\big].
\end{aligned}
\end{equation}
Note that
$\tr (  A(x,z,k))=\|\sigma (x,k)-\sigma (z,k)\|^{2}$ and hence we obtain from    \eqref{(FP1)} that  $$\tr A(x,z,k)  + 2 \wdh B(x,z,k) \le H |x-z|\rho(|x-z|).$$   On the other hand, using \eqref{(FP1)} again,
\begin{displaymath}
\lbar A(x,z,k) = \frac{\langle x-z, (\sigma(x,k)-\sigma(z,k)) (\sigma(x,k)-\sigma(z,k))^{T} (x-z)\rangle}{|x-z|^{2}} \le H |x-z|\rho(|x-z|).
\end{displaymath} Thanks to the construction of $\psi_{n}$, we have  $0\le \psi_{n}'(r) \le 1$ and
$\psi_{n}''(r) = \rho_{n}(r) \le \frac{2}{n\rho(r)} I_{(a_{n},a_{n-1})}(r)$ for all $r\ge 0$.
Putting the above estimates  into \eqref{eq-Omega-dk-psi n expression},   it then follows that
\begin{align}
\label{eq1-Omega-d}
\nonumber\wdt \Omega_{d}(k) \psi_{n}(|x-z|) &   \le \frac{1}{2} \psi_{n}''(|x-z|) H  |x-z| \rho(|x-z|) + \frac{1}{2} \psi_{n}'(|x-z|) H  \rho(|x-z|) \\
    \nonumber      & \le  \frac{H}{n}  |x-z|  I_{(a_{n},a_{n-1})}(|x-z| )+  \frac{1}{2}   H \rho(|x-z|)\\
          & \le   \frac{Ha_{n-1}}{n} +  \frac{1}{2}   H \rho(|x-z|).
 \end{align}

By virtue of the mean value theorem and the fact that $|\psi_{n}'| \le 1$, we have \begin{displaymath} \psi_{n}(|x+u-z|) - \psi_{n}(|x-z|) \le \abs{|x+u-z| - |x-z|} \le |u|, \end{displaymath} and   \begin{displaymath}
\abs{\langle \nabla_{x} \psi_{n}(|x-z|), u \rangle {\mathbf{1}}_{B(0, \e_{0})}(u)} \le |u|.  
\end{displaymath}
Then it follows that    \begin{align*}
\int & \big( \psi_{n}(|x+u-z|) - \psi_{n}(|x-z|)- \langle \nabla_{x} \psi_{n}(|x-z|), u \rangle {\mathbf{1}}_{B(0, \e_{0})}(u) \big) \big( \nu (x,k,\d u) - \nu (z,k,\d u) \big)^{+}  \\
& \le 2 \int |u| \big( \nu (x,k,\d u) - \nu (z,k,\d u) \big)^{+}.
\end{align*}
Similarly, we have   \begin{align*} \int & \big( \psi_{n}(|x-z-u|) - \psi_{n}(|x-z|)- \langle \nabla_{z} \psi_{n}(|x-z|), u \rangle {\mathbf{1}}_{B(0, \e_{0})}(u) \big) \big( \nu (z,k,\d u) - \nu (x,k,\d u) \big)^{+}  \\
& \le 2 \int |u| \big( \nu (x,k,\d u) - \nu (z,k,\d u) \big)^{+}. \end{align*}
Note that $\nabla_{x} \psi_{n} (|x-z|) = - \nabla_{z} \psi_{n} (|x-z|)$. Thus
\begin{equation*}\label{eq-x^z-int=0}\begin{aligned}
   \int \big[ &  \psi_{n}(|x+u-z-u|) - \psi_{n}(|x-z|) -\langle \nabla_{x} \psi_{n}(|x-z|), u \rangle {\mathbf{1}}_{B(0, \e_{0})}(u)  \\  & -  \langle \nabla_{z} \psi_{n}(|x-z|), u \rangle {\mathbf{1}}_{B(0, \e_{0})}(u) \big] \big( \nu (x,k,\d u)  \wedge \nu (z,k,\d u) \big) = 0. \end{aligned}\end{equation*}
Then, using the definition of $\wdt \Omega_{j}(k)$ in \eqref{eq-Omega-j-defn}, we obtain
\begin{align}
\label{eq-Omega-j-est}
\nonumber \wdt \Omega_{j}& (k) \psi_{n} (|x-z|) \\
\nonumber & \le 2  \int |u| \big( \nu (x,k,\d u) - \nu (z,k,\d u) \big)^{+} + 2 \int |u|  \big( \nu (z,k,\d u) - \nu (x,k,\d u) \big)^{+} \\ &   \le 2 \int |u|  \| \nu (x,k, \cdot) - \nu (z,k, \cdot)  \| (\d u) \le 2 H \rho(|x-z|),
 \end{align}  where the last inequality follows from \eqref{(FP2)}.

A combination of \eqref{eq1-Omega-d} and \eqref{eq-Omega-j-est} yields
\begin{displaymath}
\wdt \LL_{k} \psi_{n}(|x-z|) =  \wdt \Omega_{d}(k) \psi_{n}(|x-z|) + \wdt \Omega_{j}(k) \psi(|x-z|) \le  \frac{H a_{n-1}}{n} +  3  H\rho( |x-z|),\ \forall x,z\in \R^{d}.
\end{displaymath}
Now we apply It\^o's formula to the process $\psi_{n}(|\wdt X^{(k)(x)}(\cdot) - \wdt Z^{(k)(z)}(\cdot)|)$ to obtain
\begin{equation}\label{eq-psi-n-mean}\begin{aligned}
\E_{k}&\bigl [\psi_{n}(|\wdt X^{(k)(x)}(t\wedge T_{R} ) - \wdt Z^{(k)(z)}(t\wedge T_{R} )|)\bigr]\\ &  = \psi_{n}( |x - z|) + \E_{k} \biggl[ \int_{0}^{t\wedge T_{R} } \wdt \LL_{k} \psi_{n} (|\wdt X^{(k)(x)}(s) - \wdt Z^{(k)(z)}(s)|) \d s  \biggr]\\
  & \le \psi_{n}( |x - z|)  + \frac{Ha_{n-1}t}{n} + 3H  \E_{k}\biggl[ \int_{0}^{t\wedge T_{R} }\rho\bigl( |\wdt X^{(k)(x)}(s) - \wdt Z^{(k)(z)}(s)|\bigr) \d s \biggr].
\end{aligned}\end{equation}

Recall that  $\psi_{n}(|x|) \uparrow |x|$ and $a_{n}\to 0$ as $n \to \infty$. Therefore, passing to the limit as $n \to \infty$ on both sides of \eqref{eq-psi-n-mean}, it follows from the monotone convergence theorem that
\begin{align*}
\E_{k} & \bigl[|\wdt X^{(k)(x)}(t\wedge T_{R} ) - \wdt Z^{(k)(z)}(t\wedge T_{R} )|\bigr] \\ &  \le |x-z| +   3 H  \E_{k}\biggl[ \int_{0}^{t\wedge T_{R} } \rho\big(|\wdt X^{(k)(x)}(s) - \wdt Z^{(k)(z)}(s)|\bigr) \d s \biggr] \end{align*}
Furthermore, passing to the limit as $R \to \infty$, we have by Fatou's lemma and the monotone convergence theorem that
\begin{align}\label{eq-u(t)<=v(t)}
\nonumber\E_{k}  \bigl[|\wdt X^{(k)(x)}(t  ) - \wdt Z^{(k)(z)}(t  )|\bigr]
 & \le  |x-z| +  3 H  \E_{k}\biggl[ \int_{0}^{t  } \rho\big(|\wdt X^{(k)(x)}(s) - \wdt Z^{(k)(z)}(s)|\bigr) \d s \biggr]\\
\nonumber & \le  |x-z| + 3  H  \E_{k}\biggl[ \int_{0}^{t}\rho\big( |\wdt X^{(k)(x)}(s  ) - \wdt Z^{(k)(z)}(s )|\big) \d s \biggr]  \\
 & \le  |x-z| + 3  H   \int_{0}^{t}\rho\Big(\E_{k}\big[ |\wdt X^{(k)(x)}(s ) - \wdt Z^{(k)(z)}(s)|\big]\Bigr )\d s,
\end{align} where the last inequality follows from Fubini's theorem and Jenson's inequality. Denote $u(t): = \E_{k}  \bigl[|\wdt X^{(k)(x)}(t  ) - \wdt Z^{(k)(z)}(t)|\bigr]$ and $v(t): = |x-z| + 3 H \int_{0}^{t} \rho(u(s))\d s$. Then by \eqref{eq-u(t)<=v(t)}, we have $0 \le u(t) \le v(t).$  Define $G(r): = \int_{1}^{r}\frac{\d s}{\rho(s)}$ for $r >0$. Then $G$ is nondecreasing and satisfies $\lim_{r\downarrow 0} G(r) = -\infty$ thanks to \eqref{eq-rho-sec-2}. In addition, we have
 \begin{align*}
G(u(t)) & \le       G(v(t)) = G(|x-z |) + \int_{0}^{t} G'(v(s)) v'(s) \d s \\
         & = G(|x-z |) + 3 H \int_{0}^{t}  \frac{  \rho(u(s))}{\rho(v(s))}\d s
      \le G(| x-z |) + 3H  t,
\end{align*} where the last inequality follows from the assumption that $\rho $ is nondecreasing. Let also $G^{-1}(r) : = \inf\{s \ge 0: G(s) > r\}$ for $r\in \R$.  Then $G^{-1}$ is   nondecreasing and satisfies $\lim_{r\to -\infty} G^{-1}(r) = 0$. Furthermore, we have
\begin{equation}\label{(FP11)}0 \le u(t)=\E_{k}  \bigl[|\wdt X^{(k)(x)}(t  ) - \wdt Z^{(k)(z)}(t)|\bigr] \le G^{-1}(G(| x-z |) + 3H  t).\end{equation}
In particular, when   $  |x-z| \to 0$,
we see that the right most expression of \eqref{(FP11)} converges to $0$
and so does $u(t) $.
This implies \eqref{(FP6)} and hence completes the proof. \end{proof}

\begin{Lemma} \label{FP5}
Suppose that Assumptions~\ref{assump-Lhasonlyone}, \ref{FP1} and \ref{FP4} hold. For
each $k \in \ss$, the killed L\'{e}vy type process $X^{(k)}$
introduced in \eqref{kp1} has Feller property.
\end{Lemma}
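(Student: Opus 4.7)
My plan is to run the proof in parallel with that of Lemma \ref{Lem-FP3}, reusing the coupling process $(\wdt X^{(k)(x)}, \wdt Z^{(k)(z)})$ constructed there. Recall from \eqref{kp1} that the killed semigroup has the representation
\begin{equation*}
(P_t^{(k)} f)(x) = \E_k\!\left[\exp\!\Big(\textstyle\int_0^t q_{kk}(\wdt X^{(k)(x)}(s)) \d s\Big) f(\wdt X^{(k)(x)}(t))\right],
\end{equation*}
so the task is to show that this expression is continuous in $x$ whenever $f\in C_b(\R^d)$ (or $C_0(\R^d)$). The two key ingredients to be assembled are: (i) the $L^1$-convergence of the coupling that comes out of inequality \eqref{(FP11)}, and (ii) Lipschitz control on $q_{kk}$. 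For (ii), note that $q_{kk}(x)=-\sum_{l\ne k}q_{kl}(x)$, so Assumption \ref{FP4} yields $|q_{kk}(x)-q_{kk}(z)|\le (n_0-1)H|x-z|$, while Assumption \ref{assump-Lhasonlyone}(ii) together with $q_{kk}\le 0$ gives that $q_{kk}$ is a bounded nonpositive function. In particular, $\exp\!\big(\int_0^t q_{kk}(\cdot)\,\d s\big)\in(0,1]$.

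Next I would apply the triangle inequality to split
\begin{equation*}
\big|(P_t^{(k)}f)(x)-(P_t^{(k)}f)(z)\big| \le I_1(x,z)+I_2(x,z),
\end{equation*}
where $I_1$ collects the difference due to the killing factors,
\begin{equation*}
I_1(x,z):=\E_k\!\left[\big|e^{\int_0^t q_{kk}(\wdt X^{(k)(x)}(s))\d s}-e^{\int_0^t q_{kk}(\wdt Z^{(k)(z)}(s))\d s}\big|\;|f(\wdt X^{(k)(x)}(t))|\right],
\end{equation*}
and $I_2$ collects the difference due to the $f$-values,
\begin{equation*}
I_2(x,z):=\E_k\!\left[e^{\int_0^t q_{kk}(\wdt Z^{(k)(z)}(s))\d s}\,|f(\wdt X^{(k)(x)}(t))-f(\wdt Z^{(k)(z)}(t))|\right].
\end{equation*}

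For $I_1$, I would use the elementary bound $|e^{-a}-e^{-b}|\le|a-b|$ for $a,b\ge 0$ together with the Lipschitz property of $q_{kk}$ and Fubini's theorem to estimate
\begin{equation*}
I_1(x,z)\le \|f\|_\infty (n_0-1)H\int_0^t \E_k\!\big[|\wdt X^{(k)(x)}(s)-\wdt Z^{(k)(z)}(s)|\big]\d s.
\end{equation*}
By \eqref{(FP11)}, the integrand is bounded by $G^{-1}(G(|x-z|)+3Hs)$, which tends to $0$ as $z\to x$ uniformly in $s\in[0,t]$; hence $I_1(x,z)\to 0$ by dominated convergence. For $I_2$, since the exponential factor is in $(0,1]$, it suffices to show $\E_k\!\left[|f(\wdt X^{(k)(x)}(t))-f(\wdt Z^{(k)(z)}(t))|\right]\to0$. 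Inequality \eqref{(FP11)} again forces $\wdt X^{(k)(x)}(t)-\wdt Z^{(k)(z)}(t)\to0$ in $L^1$, hence in $\bP_k$-probability; since $f$ is bounded and continuous, $f(\wdt X^{(k)(x)}(t))-f(\wdt Z^{(k)(z)}(t))\to0$ in probability and the dominated convergence theorem finishes the job.

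Combining, $(P_t^{(k)}f)(z)\to(P_t^{(k)}f)(x)$ as $z\to x$ for every $f\in C_b(\R^d)$, which establishes the Feller property for the killed process $X^{(k)}$. No genuine new difficulty arises beyond the content of Lemma \ref{Lem-FP3}: the one thing to be careful about is verifying the Lipschitz and boundedness properties of $q_{kk}$ from the stated assumptions on $\{q_{kl}\}_{l\ne k}$ together with Assumption \ref{assump-Lhasonlyone}(ii), which enables us to recycle the coupling estimate \eqref{(FP11)} as a black box.
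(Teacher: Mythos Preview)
Your proof is correct and follows essentially the same approach as the paper: both use the coupling $(\wdt X^{(k)(x)},\wdt Z^{(k)(z)})$ from Lemma~\ref{Lem-FP3}, the representation \eqref{kp1}, the Lipschitz bound on $q_{kk}$ from Assumption~\ref{FP4}, the inequality $|e^{-a}-e^{-b}|\le|a-b|$, and the estimate \eqref{(FP11)}. The only cosmetic difference is that the paper splits your $I_2$ into two pieces via an $\varepsilon$-threshold (their terms (\ref{(FP14)}.II) and (\ref{(FP14)}.III)), whereas you handle it in one stroke by bounded convergence; the content is identical.
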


\begin{proof} For an arbitrarily fixed $k \in \ss$, we need only to
prove that for any given $t>0$ and $f \in C_{b}(\rr^{d})$,
\begin{equation}\label{(FP13)}
\begin{aligned}
\ad \bigl|\E_{k} \bigl[f(X^{(k)(x)}(t))\bigr]-\E_{k} \bigl[f(X^{(k)(z)}(t))\bigr]\bigr|\\
\aad =\biggl| \E_{k}\biggl[f(\wdt {X}^{(k)(x)}(t))\exp
\biggl\{\int_{0}^{t}q_{kk}(\wdt {X}^{(k)(x)}(s))\d s\biggr\}\biggr]  \\
  \aad \qquad
- \E_{k}\biggl[f(\wdt {X}^{(k)(z)}(t))\exp
\biggl\{\int_{0}^{t}q_{kk}(\wdt {X}^{(k)(z)}(s))\d s\biggr\}\biggr]\biggr|
\end{aligned}
\end{equation} tends to zero as $|x-z| \to 0$.
Using the coupling process $(\wdt {X}^{(k)}, \wdt {Z}^{(k)})$ generated
by the coupling generator $\wdt {{\LL}}_{k}$ as in the proof of
Lemma~\ref{Lem-FP3}, we obtain that for any given $\varepsilon >0$, the
right-hand side of equality (\ref{(FP13)}) equals
\begin{align}\label{(FP14)}
\nonumber \ad \biggl| \E_{k}\biggl[f(\wdt {X}^{(k)(x)}(t))\exp
\biggl\{\int_{0}^{t}q_{kk}(\wdt {X}^{(k)(x)}(s))\d s\biggr\}\biggr]
\\
\nonumber \aad \qquad - \E_{k}\biggl[f(\wdt {Z}^{(k)(z)}(t))\exp
\biggl\{\int_{0}^{t}q_{kk}(\wdt {Z}^{(k)(z)}(s))\d s\biggr\}\biggr]\biggr|\\
\nonumber \aad \le\E_{k}  \biggl[ \biggl| f(\wdt {X}^{(k)(x)}(t))\exp
\biggl\{\int_{0}^{t}q_{kk}(\wdt {X}^{(k)(x)}(s))\d s\biggr\}
\\
\nonumber \aad \qquad \quad - f(\wdt {Z}^{(k)(z)}(t))\exp
\biggl\{\int_{0}^{t}q_{kk}(\wdt {Z}^{(k)(z)}(s))\d s \biggr\}\biggr|\biggr]\\
\aad \le \|f\| \, \E_{k}\biggl[\biggl|\exp
\biggl\{\int_{0}^{t}q_{kk}(\wdt {X}^{(k)(x)}(s))\d s\biggr\}- \exp
\biggl\{\int_{0}^{t}q_{kk}(\wdt {Z}^{(k)(z)}(s))\d s\biggr\}\biggr| \biggr]\\
\nonumber \aad \quad +2 \|f\| \,\E_{k} \biggl[\exp
\biggl\{\int_{0}^{t}q_{kk}(\wdt {Z}^{(k)(z)}(s))\d s\biggr\} \one_{\{ |f(\wdt {X}^{(k)(x)}(t))-f(\wdt {Z}^{(k)(z)}(t))| \ge
\varepsilon\}}\biggr] \\
\nonumber  \aad \quad +   \e \,\E_{k} \biggl[\exp
\biggl\{\int_{0}^{t}q_{kk}(\wdt {Z}^{(k)(z)}(s))\d s\biggr\} \one_{\{ |f(\wdt {X}^{(k)(x)}(t))-f(\wdt {Z}^{(k)(z)}(t))| <
\varepsilon\}}\biggr]
\\
\nonumber  \aad :=(\ref{(FP14)}.\hbox{I})+(\ref{(FP14)}.\hbox{II})
+(\ref{(FP14)}.\hbox{III}),
\end{align} where $\|f\|$ denotes the uniform (or supremum) norm of the function $f$. Noting
that $q_{kk} \le 0$ and the elementary inequality $|e^{-a} - e^{-b}| \le |a-b|$ for $a,b > 0$, we obtain from   \eqref{(FP12)} and \eqref{(FP11)} that
\begin{equation}\label{(FP15)}
\begin{array}{ll}
(\ref{(FP14)}.\hbox{I})\ad \le \|f\| \, \E_{k} \biggl[
\biggl|\int_{0}^{t}q_{kk}(\wdt {X}^{(k)(x)}(s))\d s-
\int_{0}^{t}q_{kk}(\wdt {Z}^{(k)(z)}(s))\d s\biggr|\biggr]\\
\ad \le (n_{0}-1)H \|f\| \, \int_{0}^{t}\E_{k}\biggl[ \Bigl|\wdt {X}^{(k)(x)}(s)-
\wdt {Z}^{(k)(z)}(s) \Bigr| \biggr]\d s\\
\ad \le    (n_{0}-1)H \|f\| \, \int_{0}^{t} G^{-1}(G(| x-z |) + 3H  s) \d s,                    
\end{array}
\end{equation} where $G$ and $G^{-1}$ are the functions defined in
the proof of Lemma \ref{Lem-FP3}. Since both $G$ and $G^{-1}$ are nondecreasing, for all $s\in [0,t]$ and $x,z\in \R^{d}$ with $|x-z| \le 1$, we have $0 \le G^{-1}(G(| x-z |) + 3H  s) \le G^{-1}(G(1) + 3 H t)= G^{-1}(3 Ht) $, which is integrable on the interval $[0,t]$. Thus it follows from the dominated convergence theorem and \eqref{(FP11)} that $(\ref{(FP14)}.\hbox{I})\to 0$  as $|x-z| \to 0$. 
Moreover, in view of (\ref{(FP11)}), we have
that $\wdt {X}^{(k)(x)}(t)$ converges to $\wdt {Z}^{(k)(z)}(t)$ in
probability $\bP_{k}$ as $|x-z| \to 0$. Thus, from the continuity of
$f$, 
we obtain that $f(\wdt {X}^{(k)(x)}(t))$ also converges to
$f(\wdt {Z}^{(k)(z)}(t))$ in probability $\bP_{k}$ as $|x-z| \to 0$.
Combining this with $q_{kk} \le 0$, we derive that
\begin{equation}\label{(FP16)}
(\ref{(FP14)}.\hbox{II}) \le 2 \|f\| \,
\bP_{k}\bigl(\bigl|f(\wdt {X}^{(k)(x)}(t))-f(\wdt {Z}^{(k)(z)}(t))\bigr|
\ge \varepsilon \bigr) \to 0
\end{equation} as $|x-z| \to 0$. Using the fact that $q_{kk} \le 0$ again, we see that  $(\ref{(FP14)}.\hbox{III})$
does not exceed $\varepsilon$; which can be arbitrarily small.
Combining this, \eqref{(FP14)},  \eqref{(FP15)} and  \eqref{(FP16)}
together, we conclude that the right-hand side of equality
(\ref{(FP13)}) tends to zero as $|x-z| \to 0$. The proof is
complete. \end{proof}

\begin{Lemma}\label{lem-CP1}
Let Z be the subprocess of $\wdt{Z}$ killed at the rate $q$ with
lifetime $\zeta$,  that is,
\begin{equation}\label{kp2}
\E[f(Z^{(x)}({t}))] =\displaystyle \E\bigl[f(\wdt{Z}^{(x)}(t)); t<\zeta\bigr]
=\E\biggl[\exp\biggl\{-\int_{0}^{t}q(\wdt{Z}^{(x)}(s))\d s\biggr\}f(\wdt{Z}^{(x)}(t))\biggr],
\end{equation}
where $\wdt{Z}$ is a right continuous strong Markov process,  $q \ge 0$ on $\rr^{d}$, and $f \in \B_{b}(\R^{d})$. Then for any nonnegative function  $\phi   $ on $\rr^{d}$
and constant $\al >0$, we have
\begin{equation}\label{eq:zeta}
\E[e^{-\al \zeta}\phi(Z^{(x)}(\zeta-))]=G_{\al}^{Z}(q\phi)(x),
\end{equation} where $\{G_{\al}^{Z}, \al >0\}$ denotes the resolvent
for the killed process $Z$.
\end{Lemma}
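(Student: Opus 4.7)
The plan is to invoke the standard conditional representation of the killing time of a subordinate process and then unwind the identity by Tonelli's theorem. Conditional on the trajectory of $\wdt Z^{(x)}$, the lifetime $\zeta$ is governed by a state-dependent exponential clock, so its conditional survival function is
\begin{displaymath}
\P\bigl(\zeta > t \,\big|\, \wdt Z^{(x)}\bigr) = \exp\Bigl\{-\!\int_{0}^{t} q(\wdt Z^{(x)}(s))\,\d s\Bigr\},
\end{displaymath}
and differentiating in $t$ gives the conditional density
$f_{\zeta}(t \mid \wdt Z^{(x)}) = q(\wdt Z^{(x)}(t))\,\exp\{-\int_{0}^{t} q(\wdt Z^{(x)}(s))\,\d s\}$.
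Moreover, by the construction of the killed process, $Z^{(x)}(\zeta-) = \wdt Z^{(x)}(\zeta)$ on $\{\zeta < \infty\}$.

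With these ingredients, I would condition on $\wdt Z^{(x)}$ and write
\begin{align*}
 \E\bigl[e^{-\al \zeta}\phi(Z^{(x)}(\zeta-))\bigr]
 &= \E\Bigl[\E\bigl[e^{-\al \zeta}\phi(\wdt Z^{(x)}(\zeta))\,\big|\, \wdt Z^{(x)}\bigr]\Bigr]\\
 &= \E\!\left[\int_{0}^{\infty} e^{-\al t}\,\phi(\wdt Z^{(x)}(t))\, q(\wdt Z^{(x)}(t))\, e^{-\int_{0}^{t} q(\wdt Z^{(x)}(s))\,\d s}\,\d t\right].
\end{align*}
Since the integrand is nonnegative (as $\phi \ge 0$, $q \ge 0$, and $\al > 0$), Tonelli's theorem allows the expectation and the $\d t$ integral to be swapped, yielding
\begin{displaymath}
\E\bigl[e^{-\al \zeta}\phi(Z^{(x)}(\zeta-))\bigr]
 = \int_{0}^{\infty} e^{-\al t}\,\E\!\left[(q\phi)(\wdt Z^{(x)}(t))\, e^{-\int_{0}^{t} q(\wdt Z^{(x)}(s))\,\d s}\right]\d t.
\end{displaymath}

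Finally, applying the defining formula \eqref{kp2} with $f$ replaced by $q\phi$, the inner expectation is exactly $\E[(q\phi)(Z^{(x)}(t))]$, and the remaining integral is by definition $G_{\al}^{Z}(q\phi)(x)$, proving \eqref{eq:zeta}. The only place one has to be slightly careful is justifying the conditional density computation; this is standard for subordinate (killed) strong Markov processes with a measurable killing rate, and the nonnegativity of $\phi$ and $q$ makes Tonelli immediate so that no further integrability hypothesis needs to be invoked. Thus I expect no real obstacle—the lemma is essentially a direct Feynman--Kac style manipulation once the conditional law of $\zeta$ given the underlying path is recorded.
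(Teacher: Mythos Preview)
Your argument is correct and is essentially the same as the paper's: both pivot through the intermediate expression $\E\bigl[\int_{0}^{\infty} e^{-\al t}(q\phi)(\wdt Z^{(x)}(t))\exp\{-\int_{0}^{t} q(\wdt Z^{(x)}(s))\,\d s\}\,\d t\bigr]$, identifying it with $G_{\al}^{Z}(q\phi)(x)$ via \eqref{kp2} and with $\E[e^{-\al\zeta}\phi(Z^{(x)}(\zeta-))]$ on the other side. The only difference is that where the paper invokes a result from Sharpe's \emph{General Theory of Markov Processes} (p.~286, using the multiplicative functional $m_t=\exp\{-\int_0^t q(\wdt Z(s))\,\d s\}\one_{(t<\zeta)}$) to pass between the latter two quantities, you carry out this step directly by conditioning on the path of $\wdt Z^{(x)}$ and using the explicit survival/density of $\zeta$; your version is thus more self-contained but otherwise identical in spirit.
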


\begin{proof} By the   definition of the resolvent and
(\ref{kp2}), we get
$$\begin{array}{ll}
G_{\al}^{Z}(q\phi)(x)\ad =\E \biggl[\int_{0}^{\infty} e^{-\al
t}(q\phi)(Z^{(x)}(t))\d t\biggr]\\
\ad =\E \biggl[\int_{0}^{\infty} e^{-\al
t}(q\phi)(\wdt{Z}^{(x)}(t))\exp\biggl\{-\int_{0}^{t}q(\wdt{Z}^{(x)}(s))\d s\biggr\}\d t\biggr],
\end{array}$$ which by page 286 in  \cite{Sharpe-88} (putting
$m_t=\exp\{-\int_{0}^{t}q(\wdt{Z}(s))\d s\}{\mathbf{1}}_{(t<\zeta)}$
there) equals the left-hand side in (\ref{eq:zeta}). \end{proof}

  For each $k \in \ss$, let $\{G^{(k)}_\al, \al>0\}$
be the resolvent for the generator $\LL_k+q_{kk}$. Let us also denote by $\{G_{\alpha}, \alpha >0 \}$
the resolvent for the generator $\A$ defined in \eqref{A}.
  Let
$$G^{0}_{\al}=\left(\begin{array}{cccc}
G^{(1)}_{\al} & 0 & \cdots & 0\\
0 & G^{(2)}_{\al} & \cdots & 0\\
\vdots & \vdots & \ddots & \vdots\\
0 & 0 & \cdots & G^{(n_{0})}_{\al}
\end{array} \right) \ \hbox{and} \  Q^{0}(x)=Q(x)-\left(\begin{array}{cccc}
q_{11}(x) & 0 & \cdots & 0\\
0 & q_{22}(x) & \cdots & 0\\
\vdots & \vdots & \ddots & \vdots\\
0 & 0 & \cdots & q_{n_{0}n_{0}}(x)
\end{array} \right).$$

\begin{Lemma} \label{lem-resolvant-series}
Suppose that Assumption~\ref{assump-Lhasonlyone} holds. There exists a constant
${\al}_1>0$ such that for any ${\al}\ge {\al}_1$ and any
$f(\cdot,k)\in \B_{b}(\rr^{d})$ with $k \in \ss$,
\begin{equation}\label{(FP17)}
G_{\al}f=G^{0}_{\al}
f+\sum_{m=1}^{\infty}G^{0}_{\al}\bigl(Q^{0}G^{0}_{\al}\bigr)^{m}f.
\end{equation}
\end{Lemma}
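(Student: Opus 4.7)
The plan is to derive \eqref{(FP17)} as a Neumann-type perturbation series obtained by decomposing paths of $(X,\La)$ according to their number of regime switches. First, observe that $\A=\mathcal{L}^{0}+Q^{0}$, where $\mathcal{L}^{0}$ is the diagonal operator whose $k$-th block is $\LL_{k}+q_{kk}$, so that $G^{0}_{\alpha}$ is indeed the resolvent of $\mathcal{L}^{0}$. Since each $G^{(k)}_{\alpha}$ arises from the sub-Markov semigroup of the killed L\'evy-type process $X^{(k)}$, we have $\|G^{0}_{\alpha}\|_{\infty}\le 1/\alpha$; and by Assumption~\ref{assump-Lhasonlyone}(ii), $C_{Q}:=\sup_{x,k}(-q_{kk}(x))<\infty$, hence $\|Q^{0}\|_{\infty}\le C_{Q}$ because $(Q^{0}g)(x,k)=\sum_{l\ne k}q_{kl}(x)g(x,l)$ and $\sum_{l\ne k}q_{kl}(x)=-q_{kk}(x)\le C_{Q}$. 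Choosing any $\alpha_{1}>C_{Q}$ guarantees $\|G^{0}_{\alpha}Q^{0}\|_{\infty}\le C_{Q}/\alpha<1$ for every $\alpha\ge\alpha_{1}$, so that the right-hand side of \eqref{(FP17)} converges absolutely and uniformly on $\R^{d}\times\ss$, with $\|G^{0}_{\alpha}(Q^{0}G^{0}_{\alpha})^{m}f\|_{\infty}\le \alpha^{-1}(C_{Q}/\alpha)^{m}\|f\|_{\infty}$.

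Next I would use the switching times $\{\tau_{n}\}$ from \eqref{tau} together with the strong Markov property of $(X,\La)$ recorded in Remark~\ref{rem-strong-Markov} to split the resolvent as
\begin{equation*}
G_{\alpha}f(x,k)=\sum_{m=0}^{\infty}I_{m}(x,k),\qquad I_{m}(x,k):=\E^{(x,k)}\!\left[\int_{\tau_{m}}^{\tau_{m+1}}e^{-\alpha t}f(X(t),\La(t))\,\d t\right].
\end{equation*}
On $[\tau_{m},\tau_{m+1})$, $\La$ is frozen at $\La(\tau_{m})$ while $X$ evolves as the L\'evy-type process with generator $\LL_{\La(\tau_{m})}$, so the base case $m=0$ is precisely the definition of the killed process in \eqref{kp1} and yields $I_{0}(x,k)=G^{(k)}_{\alpha}f(\cdot,k)(x)=G^{0}_{\alpha}f(x,k)$.

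The inductive step is the core computation. Applying the strong Markov property at $\tau_{1}$ and changing variables gives $I_{m}(x,k)=\E^{(x,k)}[e^{-\alpha\tau_{1}}I_{m-1}(X(\tau_{1}),\La(\tau_{1}))]$. By Proposition~\ref{prop-no-common-jump}, $X(\tau_{1}-)=X(\tau_{1})$ almost surely, and the construction of $\P^{(x,k)}$ in Theorem~\ref{thm-general} (together with the embedded-chain formula recalled from \cite{Wang-14} at the end of its proof) delivers the one-step switching law
\begin{equation*}
\P^{(x,k)}\bigl(\La(\tau_{1})=l\,\big|\,\F_{\tau_{1}-}\bigr)=\frac{q_{kl}(X(\tau_{1}-))}{-q_{kk}(X(\tau_{1}-))},\qquad l\ne k.
\end{equation*}
Hence, for any bounded measurable $g$ on $\R^{d}\times\ss$,
\begin{equation*}
\E^{(x,k)}\!\left[e^{-\alpha\tau_{1}}g(X(\tau_{1}),\La(\tau_{1}))\right]=\E^{(x,k)}\!\left[e^{-\alpha\tau_{1}}\,\phi(X(\tau_{1}-))\right],\qquad \phi(y):=\sum_{l\ne k}\frac{q_{kl}(y)}{-q_{kk}(y)}g(y,l).
\end{equation*}
Since $X$ on $[0,\tau_{1})$ coincides with the killed L\'evy-type process $X^{(k)}$ with killing rate $q=-q_{kk}$, Lemma~\ref{lem-CP1} identifies the right-hand side with $G^{(k)}_{\alpha}\bigl(\sum_{l\ne k}q_{kl}(\cdot)g(\cdot,l)\bigr)(x)=(G^{0}_{\alpha}Q^{0}g)(x,k)$. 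Taking $g=I_{m-1}=G^{0}_{\alpha}(Q^{0}G^{0}_{\alpha})^{m-1}f$ then closes the induction and yields $I_{m}=G^{0}_{\alpha}(Q^{0}G^{0}_{\alpha})^{m}f$.

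Finally, the interchange of sum and expectation that converts the path decomposition into \eqref{(FP17)} is justified by the uniform geometric bound on $\|I_{m}\|_{\infty}$ from the first step, together with $\tau_{n}\to\infty$ a.s., which is built into the construction of $\P^{(x,k)}$ in Theorem~\ref{thm-general}. I expect the most delicate point to be the inductive step, specifically, identifying the joint law of $(X(\tau_{1}-),\La(\tau_{1}))$ and fitting it into the Dynkin-type formula of Lemma~\ref{lem-CP1}; once Proposition~\ref{prop-no-common-jump} and the embedded switching-distribution formula are in place, the remainder is a bookkeeping application of the strong Markov property.
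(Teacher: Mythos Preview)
Your proposal is correct and follows essentially the same route as the paper: both arguments apply the strong Markov property at the first switching time $\tau_{1}$, use the embedded switching distribution $\P^{(x,k)}(\La(\tau_{1})=l\mid\F_{\tau_{1}-})=q_{kl}(X(\tau_{1}-))/(-q_{kk}(X(\tau_{1}-)))$ together with Lemma~\ref{lem-CP1} to identify the contribution across the switch, and control convergence via the bound $\|G^{0}_{\alpha}Q^{0}\|_{\infty}<1$ for $\alpha$ large. The only organizational difference is that the paper first derives the one-step resolvent identity $G_{\alpha}=G^{0}_{\alpha}+G^{0}_{\alpha}Q^{0}G_{\alpha}$ and then iterates it with an explicit remainder, whereas you decompose $G_{\alpha}f=\sum_{m}I_{m}$ by switching intervals and identify each $I_{m}$ inductively; the substance is the same.
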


 \begin{proof}
 Let $f(x,k)\geq
0$ on $\rr^{d} \times \ss$. Applying the strong Markov property at
the first switching time $\tau$ and recalling the construction of
$(X,\La)$, we obtain
\begin{align*} \displaystyle
G_\al f(x,k) & =\E_{x,k} \biggl[\int_0^\infty e^{-{\al} t}
f(X(t),\Lambda (t))\d t \biggr]\\
&=\E_{x,k} \biggl[\int_0^\tau e^{-{\al} t} f(X(t), k)\d t \biggr]+
\E_{x,k} \biggl[ \int_\tau^\infty e^{-{\al} t}
f(X(t), \Lambda (t))\d t \biggr]\\
& =G^{(k)}_{\al} f(x,k)+\E_{x,k} \biggl[ e^{-{\al} \tau}
G_{\al} f(X(\tau), \Lambda(\tau))\biggr]\\
& =G^{(k)}_{\al} f(x,k)+\sum_{l \in \ss\setminus \{k\}}\E_{x,k}
\biggl[
e^{-{\al} \tau} 
\biggl(-\,\frac{q_{kl}}{q_{kk}}\biggr)(X({\tau-}))
G_{\al} f(X({\tau-}),l)\biggr]\\
\ad =G^{(k)}_{\al} f(x,k)+\sum_{l \in \ss\setminus \{k\}}
G_{\al}^{(k)} (q_{kl} G_{\al} f(\cdot, l))(x),
\end{align*}
where  the last equality follows from
\eqref{eq:zeta}  in Lemma~\ref{lem-CP1}.
Hence we have
\begin{equation}\label{(FP18)}
G_{\al}f(x,k)=G^{(k)}_{\al}f(\cdot,k)(x)
+G^{(k)}_{\al}\biggl(\sum_{l \in \ss \setminus
\{k\}}q_{kl}G_{\al}f(\cdot,l)\biggr)(x).
\end{equation} Of course, we know that the second term on
the right hand side of (\ref{(FP18)}) equals
$$G^{(k)}_{\al}\biggl(\sum_{l \in \ss \setminus
\{k\}}q_{kl}G^{(l)}_{\al}f(\cdot,l)\biggr)(x) =
G^{(k)}_{\al}\biggl(\sum_{l \in \ss \setminus
\{k\}}q_{kl}G^{(l)}_{\al}\biggl(\sum_{l_{1} \in \ss \setminus
\{l\}}q_{ll_{1}}G_{\al}f(\cdot,l_{1})\biggr)\biggr)(x).$$ Hence, we
further obtain that for any fixed $k \in \ss$ and any integer $m\ge
1$,
\begin{equation}\label{(FP19)}
G_{\al}f(x,k)=\sum_{i=0}^{m} \psi^{(k)}_{i}(x)+R^{(k)}_{m}(x),
\end{equation} where
\begin{align*}
&  \psi^{(k)}_{0}=G^{(k)}_{\al}f(\cdot,k), \\
 & \psi^{(k)}_{1}=G^{(k)}_{\al}\biggl(\sum_{l \in \ss \setminus
\{k\}}q_{kl}G^{(l)}_{\al}f(\cdot,l)\biggr)=G^{(k)}_{\al}\biggl(\sum_{l
\in \ss \setminus \{k\}}q_{kl}\psi^{(l)}_{0}\biggr), \\\intertext{and for $ i \ge 1$,}
& \psi^{(k)}_{i}=G^{(k)}_{\al}\biggl(\sum_{l
\in \ss \setminus \{k\}}q_{kl}\psi^{(l)}_{i-1}\biggr).
\end{align*} 
By Assumption \ref{assump-Lhasonlyone}  we know that $+\infty>H:=\max \{\|q_{kk}\|: k
\in \ss\}\ge \max \{\|q_{kl}\|: k\ne l \in \ss\}$, where $\|q_{kl}\|$ denotes the uniform (or supremum) norm of the function $q_{kl}$ as before and constant $H$ is the same as that in Assumption~\ref{FP1}. Therefore,
$$\|\psi^{(k)}_{1}\| \le \sum_{l \in \ss \setminus
\{k\}}\|G^{(k)}_{\al}(q_{kl}\psi^{(l)}_{0})\|\le \frac{H}{\al}
\sum_{l \in \ss \setminus \{k\}}\|\psi^{(l)}_{0}\|.
$$ Thus, we get that
$$\sum_{k \in \ss}\|\psi^{(k)}_{1}\|\le \frac{(n_{0}-1)H}{\al}
\sum_{k \in \ss}\|\psi^{(k)}_{0}\|\le \frac{1}{2} \sum_{k \in
\ss}\|\psi^{(k)}_{0}\|$$ when $\al \ge \al_{1}:=2(n_{0}-1)H$. A similar
argument yields that for $i \ge 1$,
\begin{equation}\label{(FP20)}
\sum_{k \in \ss}\|\psi^{(k)}_{i}\|\le\frac{1}{2} \sum_{k \in
\ss}\|\psi^{(k)}_{i-1}\|\le \frac{1}{2^{i}} \sum_{k \in
\ss}\|\psi^{(k)}_{0}\|\end{equation} and
\begin{equation}\label{(FP21)}
\|R^{(k)}_{m}(\cdot)\|\le \frac{1}{2^{m}} \sum_{k \in
\ss}\|G_{\al}f(\cdot,k)\|\end{equation} when $\al \ge \al_{1}$.
Combining (\ref{(FP20)}) and (\ref{(FP21)}) with (\ref{(FP19)}) and
letting $m \uparrow \infty$, we conclude that for each $k \in \ss$,
$G_{\al}f(\cdot,k)=\sum_{i=0}^{\infty} \psi^{(k)}_{i}$, which
clearly implies (\ref{(FP17)}).  The lemma is proved. \end{proof}


 Lemma \ref{lem-resolvant-series} and in particular \eqref{(FP17)} establishes   the relationship between   the resolvent of $(X,\Lambda)$ and those of the killed L\'{e}vy type processes $X^{(k)}$, $k \in \ss$. Now we are in the position to give the proof of Theorem \ref{thm-Feller}.

\begin{proof}[Proof of Theorem \ref{thm-Feller}] Denote the transition probability
family of Markov process $(X,\Lambda)$ by $\{P(t,(x,k),A): t\ge 0,
(x,k)\in \rr^{d} \times \ss, A\in {\cal B}(\rr^{d} \times \ss)\}$. We first prove that for any given $t>0$, $x\in
\rr^{d}$, $k,l\in \ss$ and $A \in \B(\rr^{d})$,
\begin{align}  \nonumber P & (t,(x,k),A\times \{l\}) \\ \nonumber
 & =\delta_{kl} P^{(k)}(t,x,A)+\sum_{m=1}^{+\infty} \ \ \idotsint\limits_{0<t_{1}<t_{2}<\cdots
<t_{m}<t}
  \sum_{{l_{0}, l_{1}, l_{2}, \cdots, l_{m} \in
\ss}\atop{l_{i}\neq l_{i+1}, l_{0}=k, l_{m}=l}}\int_{\rr^{d}} \cdots
\int_{\rr^{d}}P^{(l_{0})}(t_{1},x,\d y_{1}) q_{l_{0}l_{1}}(y_{1})\\ \label{(FP22)}
&\ \quad \times  P^{(l_{1})}(t_{2}-t_{1},y_{1},\d y_{2})\cdots
q_{l_{m-1}l_{m}}(y_{m})P^{(l_{m})}(t-t_{m},y_{m},A) \d t_{1} \d t_{2}
\cdots \d t_{m},
\end{align}  where $\delta_{kl}$ is the Kronecker
symbol in $k$, $l$, which equals $1$ if $k=l$ and  $0$ if $k\neq
l$. To prove (\ref{(FP22)}), denote its the right-hand side by
$\wdt {P}(t,(x,k),A\times \{l\})$ for brevity. For any bounded function
$f(x,k)$  defined on $\rr^{d}\times \ss$ such that $f(\cdot,k)$ is
Lipschitz continuous for each $k\in \ss$,
we define \begin{align}
\label{eq-Pt-f}
   P_{t}f(x,k) :& =\E_{x,k}[f(X(t),\La(t))] =\sum_{l\in \ss} \int_{\R^{d}} f(y,l) P(t, (x,k), \d y \times \{ l\}),\\ \intertext{and}
 \label{eq-tilde-Pt-f}  \nonumber \wdt P_{t} f(x,k)   :& =  \sum_{l\in \ss} \int_{\R^{d}} f(y,l) \wdt P(t, (x,k), \d y \times \{ l\}) \\  \nonumber
    & = \sum_{l\in \ss} \Biggl[\int_{\R^{d}} \delta_{kl} f(y,l) P^{(k)}(t,x,\d y) +\sum_{m=1}^{+\infty} \ \ \idotsint\limits_{0<t_{1}<t_{2}<\cdots
<t_{m}<t}   \\  \nonumber & \qquad \sum_{{l_{0}, l_{1}, l_{2}, \cdots, l_{m} \in \ss}\atop{l_{i}\neq l_{i+1}, l_{0}=k, l_{m}=l} }  \int_{\rr^{d}} \cdots
\int_{\rr^{d}}P^{(l_{0})}(t_{1},x,\d y_{1}) q_{l_{0}l_{1}}(y_{1})   P^{(l_{1})}(t_{2}-t_{1},y_{1},\d y_{2})\cdots
\\ &\qquad \times  q_{l_{m-1}l_{m}}(y_{m})P^{(l_{m})}(t-t_{m},y_{m},\d y) \d t_{1} \d t_{2}
\cdots \d t_{m}\Biggr],
\end{align} Since the process $(X,\La)$ has right continuous sample paths, it follows from the continuity of $f$ and the bounded convergence theorem that the function $t \mapsto P_{t} f(x,k)$ is right continuous. Similarly for every $l\in \ss$ and  each $m =0 ,1, \dots$, every term on the right-hand side of \eqref{eq-tilde-Pt-f} is a right-continuous function in $t$. Moreover, using Assumption  \ref{assump-Lhasonlyone} and the boundedness of the function $f$, we can see that the series on the right-hand side of \eqref{eq-tilde-Pt-f}  is absolutely convergent. Therefore it follows that  the function $t \mapsto \wdt P_{t} f(x,k)$ is also right continuous.

On the other hand, using Lemma \ref{lem-resolvant-series} and in particular
\eqref{(FP17)}, for any $\alpha > 0$, we have \begin{displaymath}
\int_{0}^{\infty} e^{-(\alpha+ \alpha_{1}) t} e^{\alpha_{1} t }P_{t} f(x,k) \d t = \int_{0}^{\infty}e^{-(\alpha+ \alpha_{1}) t} e^{\alpha_{1} t }\wdt P_{t} f(x,k) \d t,
\end{displaymath}  where $\alpha_{1}$ is as in the statement of Lemma \ref{lem-resolvant-series}.
Since both $ P_{t} f(x,k)$ and $\wdt P_{t} f(x,k)$ are right continuous in $t$,  we can apply the uniqueness theorem of Laplace
transform (refer to \cite[Theorem 1.38]{Chen04}) to conclude that  that $ e^{\alpha_{1} t }P_{t} f(x,k) =  e^{\alpha_{1} t }\wdt P_{t} f(x,k)$. That is,
\begin{equation}\label{(FP23)}
\sum_{l \in \ss}\int f(y,l )P(t,(x,k),\d y\times
\{l \})=\sum_{l \in \ss}\int
f(y,l)\wdt {P}(t,(x,k),\d y\times \{l \})
\end{equation}

Now we prove \eqref{(FP22)} by the Monotone Class
Theorem (see, e.g., Theorem 1.35 in \cite{Chen04}). Denote  by $L$ the family of
 bounded and Borel measurable functions defined on $\R^{d}\times \ss$ such that \eqref{(FP23)}  holds.
From the above argument, we know that  $L$ contains all bounded and Lipschitz continuous functions on $\R^{d}\times \ss$.
Next we show  that $L$ is a
so-called $\mathcal{L}$-system (c.f. Definition 1.34 in Section 1.5 of \cite{Chen04}). Firstly, $L$ obviously contains the
constant function $1$. Secondly, for $c_{1}$ and $c_{2}$ in $\rr$
and $f_{1}$ and $f_{2}$ in $L$, we clearly have
$c_{1}f_{1}+c_{2}f_{2}$ in $L$. Thirdly, if  $f_{n}\in L$ with $0\le
f_{n}\uparrow f$, then $f \in L $ by the monotone convergence theorem.
Hence, according to the definition of $\mathcal{L}$-system
(\cite[Definition 1.34]{Chen04}), $L$ is an
$\mathcal{L}$-system.
Moreover, let ${\cal C}$ denote the set of all
the open sets in $\rr^{d} \times \ss$. Note that ${\cal C}$ is a
$\pi$-system and recall that $L$ contains the set of all bounded
Lipschitz continuous functions defined on $\rr^{d} \times \ss$.
Therefore, by virtue of the monotone class theorem (refer to
\cite[Theorem 1.35]{Chen04}), the family $L$ contains the set of all
bounded measurable functions defined on $\rr^{d} \times \ss$. In
particular, for any given $A \in \B(\rr^{d})$ and $l \in \ss$, the
family $L$  contains the function
$\one_{A\times \{l\}}(x,k)$, which implies that (\ref{(FP22)})
holds.

Finally, we use (\ref{(FP22)}) to prove the Feller property for
$(X,\Lambda)$. By Lemma~\ref{FP5}, we know that for every $k \in
\ss$, $X^{(k)}$ has the Feller property. Therefore, in view of
Proposition 6.1.1 in \cite{MeynT-93} and Assumption~\ref{assump-Lhasonlyone}, we
derive that $P^{(k)}(t,x,A)$ and every term in the series on the right-hand side of \eqref{(FP22)}
are lower semicontinuous with respect to $x$
whenever $A$ is an open set in $\B(\rr^{d})$. This then implies that
the left-hand side of (\ref{(FP22)}) is lower semicontinuous with
respect to $(x,k)$ for every $l \in \ss$ whenever $A$ is an open set
in $\B(\rr^{d})$ by noting that $\ss$ is a finite set and has
discrete metric. Consequently, $(X,\Lambda)$ has the Feller property
(see Proposition 6.1.1 in \cite{MeynT-93} again). The theorem is
proved. \end{proof}


\section{Strong Feller Property}\label{sect-str-Feller}
In this section, we study the strong Feller property for the coordinate process $(X(t),\La(t))$ in the underlying probability space $(\Omega, \F, \P)$ as specified in Section \ref{sect-Feller}. We first make the following assumption.

\begin{Assumption}\label{Assump-Uniform-Elliptic}
There exists a $\lambda_0 > 0$ such that $\langle \xi, a(x,k)\xi\rangle \ge \lambda_0|\xi|^2$ for all $x,\xi\in \R^d$ and $k\in \ss$.
Denote by $\sigma_{\lambda_{0}}(x,k)$ the unique symmetric nonnegative definite matrix-valued function such that $\sgla^{2} (x,k) = a(x,k)- \lambda_{0}I$.
In addition, there exist   positive constants  $\delta_{0}, H $ and a nonnegative function $\vartheta $ defined on $[0,\delta_{0}]$ satisfying $\lim_{r\to 0} \vartheta (r) =0$ such that
\begin{align}
\label{eq1-str-feller-condition}
 &  2  \lan x-z, b(x,k)-b(z,k) \ran + |\sgla(x,k)-\sgla(z,k)|^{2 }     \le 2H |x-z| \vartheta(|x-z|),\\
\label{eq2-str-feller-condition}
& \int_{\R^{d}_{0}}  |u| \|\nu(x,k, \cdot)- \nu(z,k,\cdot)\|(\d u)  \le \frac{ H}{2} \vartheta(|x-z|)
\end{align}  for all $x,z\in \R^{d}$ with $|x-z| \le \delta_{0}$
and all $k\in\ss$. 
\end{Assumption}

 \begin{Remark}\label{rem-about-uniform ellipticity} The uniform ellipticity condition for the diffusion matrix $a(x,k)$ in Assumption \ref{Assump-Uniform-Elliptic} is quite standard in the literature. Indeed,   similar assumptions were used in \cite{PriolaW-06, Qiao-14,P-Zabczyk-1995} to obtain the  strong Feller property.
 \end{Remark}

\begin{Proposition}\label{prop-str-Fe}
Under Assumptions \ref{FP1},  \ref{FP4}, and \ref{Assump-Uniform-Elliptic}, for
each $k \in \ss$,   both the L\'{e}vy type process $\wdt X^{(k)}$ and the killed L\'{e}vy type process $X^{(k)}$
are strong Feller.
\end{Proposition}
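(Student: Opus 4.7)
\emph{The plan.} Following the strategy signalled in the introduction and inspired by \cite{PriolaW-06}, I will prove strong Feller via a coupling inequality: construct a coupling $(\wdt X^{(k)(x)}, \wdt Z^{(k)(z)})$ of two copies of the L\'evy type process with coupling time $T := \inf\{t \ge 0: \wdt X^{(k)(x)}_t = \wdt Z^{(k)(z)}_t\}$ and common trajectory after $T$; then for every bounded measurable $f$,
\[
|\wdt P^{(k)}_t f(x) - \wdt P^{(k)}_t f(z)| \le 2\|f\|\,\P\{T > t\},
\]
so strong Feller reduces to showing $\P\{T > t\} \to 0$ as $|x-z| \to 0$. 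The key input is Assumption~\ref{Assump-Uniform-Elliptic}, which gives the decomposition $a(x,k) = \lambda_0 I + \sgla^2(x,k)$, equivalently splits the Gaussian part of $\LL_k$ into an independent $\sqrt{\lambda_0}I$-Brownian motion and a state-dependent $\sgla$-Brownian motion.

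\emph{Coupling construction.} I will couple the $\sqrt{\lambda_0}I$-component by the Lindvall--Rogers reflection coupling across the hyperplane orthogonal to $D_t := \wdt X^{(k)(x)}_t - \wdt Z^{(k)(z)}_t$, and couple the $\sgla$-diffusion, the drift $b$, and the common jump mass $\nu(x,k,\cdot)\wedge\nu(z,k,\cdot)$ by the marching (synchronous) coupling. The singular jump contributions $(\nu(x,k,\cdot)-\nu(z,k,\cdot))^{\pm}$ will be coupled independently, exactly as in $\wdt\Omega_j(k)$ of \eqref{eq-Omega-j-defn}. Applying the It\^o--Tanaka formula to $|D_t|$ on $\{t < T\}$, a direct computation shows that the reflection coupling of the $\sqrt{\lambda_0}I$-piece contributes exactly $2\sqrt{\lambda_0}\,d\beta_t$ with $\beta$ a $1$-dimensional Brownian motion and no drift; the marching couplings of $\sgla$ and $b$ contribute a finite-variation term bounded by $H\vartheta(|D_t|)\,dt$ thanks to \eqref{eq1-str-feller-condition}; and the jump coupling contributes a mean-zero martingale plus a compensator bounded by $H\vartheta(|D_t|)\,dt$ via \eqref{eq2-str-feller-condition}. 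Hence on $\{t<T\}$,
\[
d|D_t| \le H\vartheta(|D_t|)\,dt + 2\sqrt{\lambda_0}\,d\beta_t + dM_t,
\]
with $M$ a local martingale supported on the jump times.

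\emph{Coupling time and killed process.} Because $\vartheta(r) \to 0$ as $r \to 0$, the nondegenerate Brownian driver dominates the drift and jump perturbations when $|D_t|$ is small, and a stochastic comparison with $|x-z|+2\sqrt{\lambda_0}\beta_t$ (perturbed by a vanishing drift) will yield $\P\{T > t\} \to 0$ as $|x-z|\to 0$, giving strong Feller for $\wdt X^{(k)}$. For the killed process $X^{(k)}$, I will retrace the decomposition \eqref{(FP14)} from the proof of Lemma~\ref{FP5}: on $\{T \le t\}$ the two coupled trajectories coincide, so both the test-function values and the exponential factors $\exp(\int_0^t q_{kk}(\cdot)\,ds)$ agree; on $\{T > t\}$ the difference is bounded by $2\|f\|\,\P\{T > t\}$ plus a term of order $(n_0-1)H\|f\|\int_0^t\E[|D_s|\wedge 1]\,ds$ (using Assumption~\ref{FP4} and the argument of \eqref{(FP15)}), both of which vanish as $|x-z|\to 0$.

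\emph{Main obstacle.} The delicate step is the distance analysis leading to $\P\{T>t\}\to 0$: the presence of (possibly infinite-activity) jumps means $|D_t|$ is not a genuine diffusion, and the singular parts of the jump measures can push the two coordinates apart. The crucial enabling facts are the uniform ellipticity (which supplies a reflection-coupled Brownian driver independent of the state), together with \eqref{eq2-str-feller-condition}, which forces the total-variation distance between $\nu(x,k,\cdot)$ and $\nu(z,k,\cdot)$ to vanish as $|x-z|\to 0$; I expect the main technical work to be a careful stochastic-comparison/hitting-time estimate for $|D_t|$ that absorbs the jump martingale $M_t$ into an error vanishing with $|x-z|$.
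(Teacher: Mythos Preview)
Your coupling construction is correct and coincides with the paper's: reflection coupling for the $\sqrt{\lambda_0}I$-Brownian component, synchronous (marching) coupling for the $\sgla$-diffusion, the drift, and the common jump mass $\nu(x,k,\cdot)\wedge\nu(z,k,\cdot)$, and independent coupling for the singular parts $(\nu(x,k,\cdot)-\nu(z,k,\cdot))^{\pm}$. Your treatment of the killed process $X^{(k)}$ is also essentially the paper's argument.

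The gap is in the step where you pass from the dynamics of $|D_t|$ to $\P_k\{T>t\}\to 0$. By applying It\^o--Tanaka to the function $r\mapsto r$ you discard precisely the mechanism that the reflection coupling provides. For a $C^2$ radial test function $F$, the diffusion part of the coupling generator contributes the term $\tfrac12 F''(|x-z|)\,\lbar A(x,z,k)$, and reflection coupling gives $\lbar A(x,z,k)\ge 4\lambda_0$; so when $F$ is \emph{strictly} concave this term is uniformly negative and does not vanish as $|x-z|\to 0$. With $F(r)=r$ one has $F''=0$ and this negative drift disappears; you are then left with $d|D_t|\le C\vartheta(|D_t|)\,dt + 2\sqrt{\lambda_0}\,d\beta_t + dM_t$ and must compare a jump process with a continuous Brownian motion. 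You flag this as the main obstacle but do not resolve it: standard comparison theorems do not cross the jump/continuous divide, and individual jumps of $M$ (coming from $(\nu(x,k,\cdot)-\nu(z,k,\cdot))^{\pm}$) can be large even when $|D_t|$ is small---only the \emph{compensator} is controlled by $\vartheta(|D_t|)$, not the jump sizes.

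The paper, following \cite{PriolaW-06}, avoids this entirely by taking $F(r)=r/(1+r)$ and computing $\wdh\LL_k F(|x-z|)$ directly. Concavity yields $\tfrac12 F''\,\lbar A \le -4\lambda_0/(1+|x-z|)^{3}$, while \eqref{eq1-str-feller-condition}--\eqref{eq2-str-feller-condition} bound the drift, $\sgla$-diffusion and jump compensator together by $2H\vartheta(|x-z|)/(1+|x-z|)^{2}$. Since $\vartheta(r)\to 0$, there exist $\kappa,\delta>0$ with $\wdh\LL_k F(|x-z|)\le -\kappa$ for all $0<|x-z|\le\delta$. A Dynkin-formula argument with the stopping times $T_n=\inf\{t:|D_t|<1/n\}$, $S_\delta=\inf\{t:|D_t|>\delta\}$ and an explosion cutoff then gives the quantitative bound
\[
\P_k\{T>t\}\ \le\ \Bigl(\frac{1}{\kappa t}+\frac{1}{F(\delta)}\Bigr)F(|x-z|),
\]
with no comparison theorem needed: the jumps are absorbed automatically through the generator bound. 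This is the missing idea you should incorporate.
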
 

 \begin{proof}
The proof is motivated by \cite{PriolaW-06}. Fix an arbitrary $k\in \ss$ throughout the proof. Let $\sigma_{\lambda_{0}}(x,k)$ be as in Assumption \ref{Assump-Uniform-Elliptic} 
and put $c(x,z,k) : = \lambda_0(I - 2 (x-z) (x-z)^T /|x-z|^{2})  + \sigma_{\lambda_{0}}(x,k)  \sigma_{\lambda_{0}}(z,k)^T $ for all $x,z\in \R^d$.
  For $x,z \in
\R^{d}$, set
$$\wdh a(x,z,k)=\left(\begin{array}{cc}
a(x,k) & c(x,z,k) \\
c(x,z,k)^{T}  & a(z,k)
\end{array}\right),\quad
b(x,z,k)=\left(\begin{array}{c}
b(x,k)\\
b(z,k) \end{array}\right).$$  We can verify directly that $\wdh a(x,z,k)$ is symmetric and nonnegative
definite for all $x,z \in \rr^{d}$. Then we define \begin{displaymath}
\wdh \Omega_{d}(k) h(x,z) := \frac{1}{2} \tr(\wdh a(x,z,k) \nabla^{2}  h(x,z)) + \langle b(x,z,k), \nabla h(x,z)\rangle,
\end{displaymath} and \begin{equation}\label{eq-L-hat-defn}
\wdh \LL_{k} h(x,z) : = \wdh \Omega_{d}(k) h(x,z) + \wdt \Omega_{j}(k) h(x,z),
\end{equation} where $h \in C_{0}^{2} (\R^{d}\times \R^{d})$ and $\wdt \Omega_{j}(k) $ is defined in \eqref{eq-Omega-j-defn}. Let \begin{align*}
 & A(x,z,k) =  a(x,k) + a(z,k) - 2 c(x,z,k),\\
    & \lbar A(x,z,k) = \frac{1}{|x-z|^{2}} \langle x-z, A(x,z,k) (x-z)\rangle,  \\
    &  B(x,z,k) = \langle x-z, b(x,k) - b(z,k)\rangle.
\end{align*} Straightforward computations lead to    \begin{align}\label{eq-tr(A(x,z,k))-estimate}
   \tr( A(x,z,k) ) =\|\sigma(x,k)-\sigma(z,k)\|^{2 } + 4 \lambda_{0}  \text{ and }
    \lbar A(x,z,k) \ge 4 \lambda_{0} .   \end{align}
Consider the function $F(r): = \frac{r}{1+r}, r\ge 0$. Then $F'(r) = \frac{1}{(1+r)^{2}} > 0$ and $F''(r) = \frac{-2}{(1+r)^{3}}  <  0$ for all $r \ge 0$.
 Consequently it follows from \eqref{eq1-str-feller-condition} and \eqref{eq-tr(A(x,z,k))-estimate} that
\begin{align}\label{eq2-strFe}
\nonumber \wdt \Omega_{d}(k) F (|x-z|) & = \frac{1}{2}F '' (|x-z|)\lbar A (x,z,k)  \\
 \nonumber   & \qquad + \frac{F '(|x-z|) }{2|x-z|} \big[ \tr(A(x,z,k))-\lbar A(x,z,k) + 2  B(x,z,k)\big] \\
 \nonumber   & \le 2 \lambda_{0}F '' (|x-z|) + H F '(|x-z|)\vartheta( |x-z|)\\
    & = \frac{-4\lambda_{0}}{(1+|x-z|)^{3}} +\frac{ H}{(1+|x-z|)^{2}} \vartheta (|x-z|),
\end{align}   for all $x,z\in \R^{d}$ with $|x-z|\le \delta_{0}$.

Next we estimate $ \wdt \Omega_{j}(k) F (|x-z|)$.  To this end, we note that since $F$ is concave, it follows that for any $x,z\in \R^{d}$ and $u\in \R^{d}_{0}$,  we have  \begin{align*}
  F&(|x+u-z|) - F(|x-z|)  - \lan\nabla_{x} F(|x-z|), u\ran {\mathbf{1}}_{B(0, \e_{0})}(u)\\
    & \le   F'(|x-z|) (|x+u-z|- |x-z| ) - \frac{F'(|x-z|)}{|x-z|} \lan x-z, u\ran {\mathbf{1}}_{B(0, \e_{0})}(u)\\
    &  \le \frac{2|u|}{(1+|x-z|)^{2}}.
\end{align*} Hence it follows that \begin{align*}
\int \big[ F(|x+u-z|) -   F(|x-z|)  -   \lan\nabla_{x} F(|x-z|), u\ran {\mathbf{1}}_{B(0, \e_{0})}(u)\big] \big(\nu(x,k,\d u) - \nu(z,k,\d u) \big)^{+} \\
    \le \frac{2}{(1+|x-z|)^{2}} \int |u| \big(\nu(x,k,\d u) - \nu(z,k,\d u) \big)^{+}.
\end{align*} In the same manner, we have
\begin{align*}
\int  \big[ F(|x -(z+u)|) - F(|x-z|)  - \lan\nabla_{z} F(|x-z|), u\ran {\mathbf{1}}_{B(0, \e_{0})}(u)\big] \big(\nu(x,k,\d u) - \nu(z,k,\d u) \big)^{+} \\
       \le \frac{2}{(1+|x-z|)^{2}} \int |u| \big(\nu(z,k,\d u) - \nu(x,k,\d u) \big)^{+}.
\end{align*}
On the other hand, since $\nabla_{x} F(|x-z|) = - \nabla_{z}F(|x-z|)$, we have
\begin{align*}
   \int \big[ &  F (|x+u-z-u|) - F (|x-z|) -\langle \nabla_{x} F (|x-z|), u \rangle {\mathbf{1}}_{B(0, \e_{0})}(u)\\  & -  \langle \nabla_{z} F (|x-z|), u \rangle {\mathbf{1}}_{B(0, \e_{0})}(u) \big] \big( \nu (x,k,\d u)  \wedge \nu (z,k,\d u) \big) = 0. \end{align*}
Then, using the definition of $\wdt \Omega_{j}(k)$ in \eqref{eq-Omega-j-defn} and condition \eqref{eq2-str-feller-condition}, we obtain
\begin{align}\label{eq3-strFe}
 \nonumber &  \wdt \Omega_{j}  (k) F(|x-z|)  \\  \nonumber&\  \le  \frac{2}{(1+|x-z|)^{2}} \biggl[\int |u| \big(\nu(x,k,\d u) - \nu(z,k,\d u) \big)^{+} +    \int |u| \big(\nu(z,k,\d u) - \nu(x,k,\d u) \big)^{+}  \biggr] \\
  \nonumber  &\    \le   \frac{2}{(1+|x-z|)^{2}} \int |u| \|\nu(x,k,\cdot ) - \nu(z,k,\cdot) \|(\d u)\\
    & \le \frac{H}{(1+|x-z|)^{2}} \vartheta(|x-z|),
\end{align}  for all $x,z\in \R^{d}$ with $|x-z|\le \delta_{0}$

Plugging \eqref{eq2-strFe} and \eqref{eq3-strFe} into \eqref{eq-L-hat-defn}, we obtain  that for all  $x,z\in \R^{d}$ with $|x-z|\le \delta_{0}$,
\begin{align*}
\wdh \LL_{k} F (|x-z|) &  \le \frac{-4\lambda_{0}}{(1+|x-z|)^{3}} +\frac{ 2H}{(1+|x-z|)^{2}} \vartheta (|x-z|)\\
& \le \frac{-4\lambda_{0}}{(1+ \delta_{0})^{3}} + 2 H \vartheta (|x-z|). \end{align*}
Furthermore, since $\lambda_{0} > 0$ and $\lim_{r \downarrow 0} \vartheta(r) =0$, it follows that there exist positive constants  $\kappa$ and $\delta $   ($0 < \delta < \delta_{0}$), we have
\begin{equation}
\label{eq-L-k F(|x-z|) estimate}
\wdh \LL_{k} F (|x-z|)  \le -\kappa, \text{ for all }0 < |x-z| \le \delta.
\end{equation}

Given $x\neq z$ with $\delta > |x-z | > \frac{1}{m_{0}}$, where $m_{0}\in \mathbb N$.  Let $(\wdt X^{(k) (x)},\wdt Z^{(k) (z)})$ be the coupling    process corresponding to the operator $\wdh \LL_{k}$ and denote by $T$ the coupling time. 
 For $n, N \in \mathbb N$ and the $  \delta $ in  \eqref{eq-L-k F(|x-z|) estimate}, define
\begin{align*}
 & T_n : =   \inf \Bigl\{  t \ge 0:   | \wdt X^{(k) (x)} (t) - \wdt Z^{(k) (z)} (t) | < \frac{1}{n}\Bigr\}, \\
    &  \sigma_{N} : =  \inf \{  t \ge 0:   | \wdt X^{(k) (x)} (t) | + |\wdt  Z^{(k) (z)} (t) | > N\}, \end{align*} and
    \begin{align*}
    & S_{\delta}: = \inf\{t \ge 0:  |\wdt  X^{(k) (x)} (t) - \wdt  Z^{(k) (z)} (t) | > \delta \}.
\end{align*}  We have
\begin{align*}
0 & \le F (\delta)\P_{k} \set{T_{n}\wedge \sigma_{N} > S_{\delta}}  \\ &  \le  \E_{k} [F ( | \wdt X^{(k) (x)} (T_{n}\wedge S_{\delta} \wedge \sigma_{N} ) - \wdt Z^{(k) (z)} (T_{n}\wedge S_{\delta} \wedge \sigma_{N}) |)] \\
& = F (|x-z|) + \E_{k} \biggl[ \int_{0}^{T_{n}\wedge S_{\delta} \wedge \sigma_{N}} \wdh \LL_{k} F (| \wdt X^{(k) (x)}-  \wdt Z^{(k) (z)}|) \d s\biggr]\\
& \le F (|x-z|) -  \kappa\E_{k}[T_{n}\wedge S_{\delta} \wedge \sigma_{N}],
\end{align*} where the last inequality follows from \eqref{eq-L-k F(|x-z|) estimate}.  Then it follows that
\begin{displaymath}
F (\delta)\P_{k} \set{T_{n}\wedge \sigma_{N} > S_{\delta}} +\kappa\, \E_{k}[T_{n}\wedge S_{\delta} \wedge \sigma_{N}] \le F (|x-z|).
\end{displaymath}Since $T_{n} \to T$ a.s. as $n \to \infty$ and $\sigma_{N} \to \infty$ a.s. as $N \to \infty$,  we have
\begin{displaymath}
F (\delta)\P_{k} \set{T  > S_{\delta}} +   \kappa\, \E_{k}[T \wedge S_{\delta} ] \le F (|x-z|).
\end{displaymath}
Then for any $t >0$ and $0 < |x-z| < \delta$,
\begin{align*}
 \P_{k} \set{T > t}    & = \P_{k}\set{T > t, S_{\delta} > t} + \P_{k}\set{T > t, S_{\delta} \le t}    \\
    &   \le  \P_{k}\set{T  \wedge S_{\delta} > t} + \P_{k}\set{T >  S_{\delta} }  \\
    & \le \frac{1}{t} \E_{k}[T  \wedge S_{\delta} ] +  \P_{k}\set{T >  S_{\delta} } \\
    & \le \biggl( \frac{1}{t \kappa} + \frac{1}{F (\delta)}\biggr)  F (|x-z|).
\end{align*}
  This implies the strong Feller property for the L\'{e}vy type process $\wdt X^{(k)}$ immediately. Indeed, for any $f \in \B_{b}(\R^{d})$,   $t>0$, and $0 < |x-z| < \delta$, we have
\begin{align*}
\bigl |   \E_{k}  \bigl[f(\wdt X^{(k)(x)}(t))\bigr]-\E_{k} \bigl[f(\wdt X^{(k)(z)}(t))\bigr]\bigr|  & \le \E_{k} \bigr[\bigl|f(\wdt X^{(k)(x)}(t))-f(\wdt X^{(k)(z)}(t)) \bigr|\bigr] \\
 & =  \E_{k} \bigr[\bigl|f(\wdt X^{(k)(x)}(t))-f(\wdt X^{(k)(z)}(t)) \bigr| I_{\{ T > t\}}\bigr] \\
 & \le 2 \|f\|_{\infty}  \P_{k}\set{  T> t}  \to 0, \text{ as }|x-z| \to 0.
\end{align*}  

Finally, as in the proof of Lemma \ref{FP5}, for any $f \in \B_{b}(\R^{d})$,   $t>0$, and $0 < |x-z| < \delta$, we  can write
\begin{align*}
  \bigl|  & \E_{k}  \bigl[f(X^{(k)(x)}(t))\bigr]-\E_{k} \bigl[f(X^{(k)(z)}(t))\bigr]\bigr|      \\
    & \le  \E_{k}  \biggl[ \Bigl| f(\wdt {X}^{(k)(x)}(t)) e^{\int_{0}^{t}q_{kk}(\wdt {X}^{(k)(x)}(s))\d s}
 - f(\wdt {Z}^{(k)(z)}(t))
 e^{  \int_{0}^{t}q_{kk}(\wdt {Z}^{(k)(z)}(s))\d s }\Bigr|\biggr] \\
 & \le \E_{k}  \Bigl[ \big| f(\wdt {X}^{(k)(x)}(t)) -  f(\wdt {Z}^{(k)(z)}(t))\big|  e^{\int_{0}^{t}q_{kk}(\wdt {X}^{(k)(x)}(s))\d s} \Bigr] \\
  & \quad+ \E_{k}\Bigl [f(\wdt {Z}^{(k)(z)}(t)) \big|e^{\int_{0}^{t}q_{kk}(\wdt {X}^{(k)(x)}(s))\d s} -e^{  \int_{0}^{t}q_{kk}(\wdt {Z}^{(k)(z)}(s))\d s } \big| \Bigr] \\
  & \le 2 \| f\|_{\infty} \P_{k}\set{  T> t} +  \|f \|_{\infty} \E_{k}\biggl[\biggl|\int_{0}^{t}q_{kk}(\wdt {X}^{(k)(x)}(s))\d s-  \int_{0}^{t}q_{kk}(\wdt {Z}^{(k)(z)}(s))\d s\biggr|\biggr] \\
  & \le 2 \| f\|_{\infty}  \biggl( \frac{1}{t\kappa} + \frac{1}{F (\delta)}\biggr)  F (|x-z|) + H (n_{0}-1) \|f \|_{\infty}\E_{k}\biggl[ \int_{0}^{t} \big|\wdt X^{(k)(x)}(s) - \wdt Z^{(k)(z)}(s) \big| \d s\biggr] \\
  & \le  2 \| f\|_{\infty} \biggl( \frac{1}{t\kappa} + \frac{1}{F (\delta)}\biggr)  F (|x-z|)  +  H (n_{0}-1)  \|f \|_{\infty}  \int_{0}^{t} G^{-1}(G(| x-z |) + 3H  s) \d s,
\end{align*} where the second last inequality above follows from Assumption \ref{FP4} and the last inequality follows from \eqref{(FP11)}.
Note
 that $F (\cdot)$  is continuous with $F (0 ) =0$.  In addition, recall that we argued in the proof of Lemma \ref{FP5} that $\int_{0}^{t} G^{-1}(G(| x-z |) + 3H  s) \d s \to 0$ as $|x-z| \to 0$. Thus it follows that $ \bigl|    \E_{k}  \bigl[f(X^{(k)(x)}(t))\bigr]-\E_{k} \bigl[f(X^{(k)(z)}(t))\bigr]\bigr| \to 0 $ as $|x-z| \to 0$.
On the other hand, for any $|x-z| \ge \delta$, we have
\begin{displaymath}
  \bigl|    \E_{k}  \bigl[f(X^{(k)(x)}(t))\bigr]-\E_{k} \bigl[f(X^{(k)(z)}(t))\bigr]\bigr|  \le 2 \delta^{-1}\|f\|_{\infty} |x-z|.
\end{displaymath}
 Therefore we obtain the desired  strong Feller property for the killed L\'evy process $X^{(k)}$. This completes the proof.
\end{proof}

With Proposition  \ref{prop-str-Fe} at our hands, we can use exactly the same arguments as those in the  proof of Theorem \ref{thm-Feller} to establish the following theorem.
\begin{Theorem}\label{thm-str-Feller}
Under the conditions of Proposition  \ref{prop-str-Fe}, the process $(X,\La) $ possesses the strong Feller property.
\end{Theorem}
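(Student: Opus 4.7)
The plan is to mirror the proof of Theorem \ref{thm-Feller} essentially verbatim, upgrading ``Feller'' to ``strong Feller'' at each stage. Two of the three building blocks are already in hand. First, Proposition \ref{prop-str-Fe} plays the role of Lemmas \ref{Lem-FP3} and \ref{FP5}, yielding the strong Feller property of both $\wdt X^{(k)}$ and the killed L\'evy type process $X^{(k)}$ for every $k\in\ss$. Second, Lemma \ref{lem-resolvant-series} and the transition probability decomposition \eqref{(FP22)} are purely algebraic and depend only on the well-posedness of the martingale problem for $\A$, the resolvent identity, uniqueness of Laplace transforms, and the Monotone Class Theorem; none of these inputs is sensitive to whether one is in the Feller or strong Feller setting, so they carry over without change.

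What remains is to pass from the strong Feller property of each $P^{(k)}$ to that of $(X,\La)$ via \eqref{(FP22)}. Concretely, for any bounded measurable $f$ on $\R^{d}\times\ss$ I would write
\begin{equation*}
P_{t} f(x,k) = \sum_{l\in\ss}\int_{\R^{d}} f(y,l)\, P(t,(x,k),\d y\times\{l\})
\end{equation*}
and expand using \eqref{(FP22)}, obtaining a series whose $m$-th summand is an iterated integral against the kernels $P^{(l_{i})}$, interspersed with the switching rates $q_{l_{i-1} l_{i}}$ (Lipschitz by Assumption \ref{FP4}). Because each $P^{(l_{0})}(t_{1},\cdot,\cdot)$ is a strong Feller kernel by Proposition \ref{prop-str-Fe}, after all the inner integrations are performed the $m$-th summand is continuous in $x$. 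The geometric resolvent estimates \eqref{(FP20)}--\eqref{(FP21)} translate, via \eqref{(FP22)} applied with $A=\R^{d}$ and $f\equiv 1$, into a time-domain dominant of the form $\|f\|_{\infty}(Ht)^{m}/m!$ for the $m$-th term, uniformly in $x$; hence the series converges uniformly in $x$, and continuity of the partial sums is inherited by $x\mapsto P_{t} f(x,k)$. Since $\ss$ is finite with the discrete metric, this is exactly strong Feller continuity of $(X,\La)$. Equivalently, one may invoke the strong Feller analog of Proposition 6.1.1 in \cite{MeynT-93}: the right-hand side of \eqref{(FP22)} is a uniformly convergent sum of compositions of strong Feller kernels with bounded Lipschitz multipliers, and such compositions inherit strong Feller continuity in the starting point.

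The only delicate bookkeeping, and thus the main---but in fact quite mild---obstacle, is the uniform-in-$x$ domination of the series needed to exchange the limit $|x-z|\to 0$ with the infinite sum over $m$. This is handled exactly as in the proof of Lemma \ref{lem-resolvant-series}: the factor $H/\al$ appearing in the resolvent expansion becomes a factor $Ht$ in the corresponding time-domain expansion obtained from \eqref{(FP22)}, and the Dyson-type series $\sum_{m}(Ht)^{m}/m!$ provides the required uniform bound. Beyond this verification, no genuinely new analytic difficulty arises past what was already settled in Proposition \ref{prop-str-Fe} and in the proof of Theorem \ref{thm-Feller}, which is precisely why the authors state that ``exactly the same arguments'' apply.
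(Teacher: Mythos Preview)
Your proposal is correct and follows the same overall scheme the paper indicates: invoke Proposition \ref{prop-str-Fe} in place of Lemmas \ref{Lem-FP3}--\ref{FP5}, retain Lemma \ref{lem-resolvant-series} and the decomposition \eqref{(FP22)} unchanged, and pass from the strong Feller property of each $P^{(k)}$ to that of $(X,\La)$. You have also correctly singled out the one point where the argument for strong Feller differs slightly from the Feller case: in the proof of Theorem \ref{thm-Feller} the paper uses that a (countable) sum of nonnegative lower semicontinuous functions is lower semicontinuous, so no uniform convergence was needed, whereas for continuity you must justify interchanging the limit $x\to z$ with the infinite sum over $m$, which your Dyson-type bound $\|f\|_\infty (Ht)^{m}/m!$ handles.
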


\section*{\bf\large Acknowledgements}
The research was supported in
 part by the National Natural Science Foundation of China under Grant No. 11671034, the Beijing Natural Science Foundation under Grant No. 1172001,   the Simons foundation collaboration  under Grant No. 523736, and a grant from the Research Growth Initiative of UW-Milwaukee.

 \bibliographystyle{apalike}

\def\cprime{$'$}

\end{document}